\documentclass[10pt]{amsart}
\usepackage{amsmath,amssymb,verbatim}
\usepackage[utf8]{inputenc}
\usepackage{titlesec}
\titleformat{\section}[hang]
{\upshape\bfseries}{\thesection.}{5pt}
{\large\bfseries}
\titleformat{\subsection}[hang]
{\itshape}{\thesubsection.}{5pt}
{}

\usepackage{enumerate}
\usepackage{pdflscape}
\usepackage{amsthm}
\usepackage{mathrsfs}
\usepackage{color}
\usepackage[normalem]{ulem}
\usepackage{cancel}
\usepackage{tikz-cd}
\usepackage{tikz}
\usetikzlibrary{matrix}
\usepackage[all]{xy}
\usepackage{mathtools}
\usepackage{cleveref}
\usepackage{arydshln}
\usepackage{enumitem}
\usepackage{amsaddr}

\topmargin=-.15in 
\textheight=9.0in 
\topskip=\baselineskip \flushbottom 
\textwidth=6.5in 
\evensidemargin=0in 
\oddsidemargin=0in

\newtheorem{theorem}{Theorem}[section]
\newtheorem{lemma}[theorem]{Lemma}
\newtheorem{proposition}[theorem]{Proposition}

\newtheorem{remark}[theorem]{Remark}

\newtheorem{maintheorem}{Theorem}
\newtheorem{maincorollary}[maintheorem]{Corollary}

\makeatletter
\@addtoreset{equation}{section}
\makeatother



\newcommand{\Gal}{{\rm Gal}}

\newcommand{\Aut}{\mbox{\rm Aut}}

\newcommand{\Imagen}{\mbox{\rm Im }}

\newcommand{\N}{{\mathbb N}}
\newcommand{\Z}{{\mathbb Z}}
\newcommand{\Q}{{\mathbb Q}}

\newcommand{\C}{{\mathbb C}}

\newcommand{\HQ}{{\mathbb H}}

\newcommand{\GEN}[1]{\left\langle #1 \right\rangle}

\newcommand{\U}{\mathcal{U}}

\newcommand{\CC}{\mathcal{C}}

\newcommand{\Ese}[2]{\mathcal{S}\left(#1\mid #2\right)}

\newenvironment{proofof}{\par\noindent \textit{Proof of }}{\qed\par\bigskip}

\newcommand{\qand}{\quad \text{and} \quad}

\DeclareMathOperator{\Core}{Core}
\DeclareMathOperator{\Deg}{Deg}

\title[The isomorphism problem for rational group algebras of metacyclic nilpotent groups]{The isomorphism problem for rational group algebras of finite metacyclic nilpotent groups}

\author{Àngel García-Blázquez and Ángel del Río}
\thanks{Partially supported Grant PID2020-113206GB-I00 funded by MCIN/AEI/10.13039/501100011033 and by Fundación Séneca (22004/PI/22).}

\address{Departamento de Matem\'{a}ticas, Universidad de Murcia, 30100, Murcia, Spain} \email{angel.garcia11@um.es, adelrio@um.es}

\keywords{Group rings, Isomorphism Problem}

\subjclass{16S34, 20C05, 20C10}

\begin{document}

\maketitle


\begin{abstract}
We prove that if $G$ and $H$ are finite metacyclic groups with isomorphic rational group algebras and one of them is nilpotent then $G$ and $H$ are isomorphic.
\end{abstract}

\section{Introduction}

The following general problem has been largely studied since the seminal work of Graham Higman \cite{Higman1940Thesis,Higman1949Paper} and the influential paper of Richard Brauer \cite{Brauer1951}:

\begin{quote}
	\textbf{The Isomorphism Problem for Group Rings}: Given $R$ a commutative ring and $G$ and $H$ groups, does $RG$ and $RH$ being isomorphic as $R$-algebras implies that $G$ and $H$ are isomorphic as groups?
\end{quote}

Suppose that $G$ and $H$ are finite abelian groups. Higman proved that if $\Z G\cong \Z H$ then $G\cong H$.
This fails if $R=\C$ because if $G$ and $H$ are finite abelian group with the same order then $\C G\cong \C H$.
However, by a theorem of Perlis and Walker $\Q G\cong \Q H$ implies $G\cong H$ \cite{PerlisWalker1950}.
If now $G$ and $H$ are finite metabelian groups then still we have that $\Z G\cong \Z H$ implies $G\cong H$ \cite{Whitcomb}.
However, Dade showed two finite metabelian groups $G$ and $H$ such that $kG$ and $kH$ are isomorphic as algebras for every field $k$ \cite{Dade71}.

Observe that if $\Z G\cong \Z H$ then $RG\cong RH$ for every ring $R$. This explain why the positive results for the case where $R=\Z$ are more likely than for any other ring.
Likewise positive results are more likely in a prime field than in any other field with the same characteristic.
For a while it was expected that Isomorphisms Problem for Integral Group Ring may have a general positive answer at least for finite groups.
However Hertweck showed two non-isomorphic solvable groups $G$ and $H$ such that $\Z G\cong \Z H$ and hence $RG$ and $RH$ are isomorphic for every ring $R$ \cite{Hertweck2001}.

The aim of this paper is to contribute to the Isomorphism Problem for Group Rings with rational coefficients. The contrast between Perlis and Walker Theorem and the example of Dade suggests considering the class of metacyclic groups. The main result of the paper is the following, where $\pi_G$ denotes the set of primes $p$ for which $G$ has a normal Hall $p'$-subgroup:

\begin{maintheorem}\label{MainpiIgual}
Let $G$ and $H$ be a metacyclic finite groups such that $\Q G\cong \Q H$.
Then $\pi_G=\pi_H$ and the Hall $\pi_G$-subgroups of $G$ and $H$ are isomorphic.
\end{maintheorem}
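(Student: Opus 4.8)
The plan is to read both $\pi_G$ and the isomorphism type of a Hall $\pi_G$-subgroup off the Wedderburn decomposition of $\Q G$. First a reduction. Since $G$ is solvable it has a Hall $\pi_G$-subgroup $K_G$, and since $p\in\pi_G$ means precisely that $G$ is $p$-nilpotent, intersecting $K_G$ with a normal $p$-complement of $G$ (for each $p$ dividing $|K_G|$, necessarily with $p\in\pi_G$) shows that $K_G$ is $p$-nilpotent for every prime dividing its order; a finite group with this property is nilpotent, so $K_G=\prod_{p\in\pi_G}P_p$ is the direct product of Sylow subgroups $P_p$ of $G$, each of them metacyclic. Moreover, for $p\in\pi_G$ the largest normal $p'$-subgroup $O_{p'}(G)$ is a normal Hall $p'$-subgroup, hence a complement of $P_p$, so $P_p\cong G/O_{p'}(G)$ and therefore $\Q P_p\cong\Q(G/O_{p'}(G))$ is a quotient, and by semisimplicity a direct factor, of $\Q G$. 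It thus suffices to recover from the $\Q$-algebra $\Q G$ (i) the set $\pi_G$, and (ii) for each $p\in\pi_G$ the isomorphism type of the metacyclic $p$-group $P_p$; then $\pi_G=\pi_H$, $P_p^{G}\cong P_p^{H}$ for $p$ in the common set, and $K_G=\prod_{p}P_p^{G}\cong\prod_{p}P_p^{H}=K_H$.

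For this I would attach to each prime $p$ the invariant
\[
d_p(\Q G):=\max\bigl\{\dim_\Q A\ :\ A\text{ a direct factor of }\Q G,\ A\cong\Q P\text{ for some finite }p\text{-group }P\bigr\},
\]
together with the isomorphism type of a factor realizing the maximum. One has $d_p(\Q G)\le|G|_p$ always, since such an $A$ has $\Q$-dimension $|P|$, a power of $p$ dividing $|G|=\dim_\Q\Q G$; and if $p\in\pi_G$ then $A=\Q G\,\widehat{O_{p'}(G)}\cong\Q(G/O_{p'}(G))\cong\Q P_p$ attains the bound, so $d_p(\Q G)=|G|_p$. The content is the converse together with a rigidity statement: \emph{for metacyclic $G$, if $p\notin\pi_G$ then $d_p(\Q G)<|G|_p$, while if $p\in\pi_G$ then every direct factor of $\Q G$ isomorphic to the rational group algebra of a $p$-group of order $|G|_p$ is isomorphic to $\Q(G/O_{p'}(G))=\Q P_p$}. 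Granting this, $p\in\pi_G$ if and only if $d_p(\Q G)$ equals the $p$-part of $\dim_\Q\Q G$, which is an algebra invariant, so $\pi_G=\pi_H$; and for $p$ in this common set the maximal factors of $\Q G$ and $\Q H$ are matched by the isomorphism, whence $\Q P_p^{G}\cong\Q P_p^{H}$, and so $P_p^{G}\cong P_p^{H}$ by the isomorphism problem for rational group algebras of metacyclic $p$-groups (the nilpotent case, which is settled en route). Taking direct products over $p\in\pi_G$ gives the theorem.

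The main obstacle is the italicised rigidity statement, i.e.\ computing $d_p(\Q G)$ and the maximizing factor in group-theoretic terms when $G$ is metacyclic but not $p$-nilpotent. Here I would use the explicit description of the Wedderburn components of $\Q G$ for metacyclic $G$: the simple components of the rational group algebra of a finite $p$-group have centers contained in a cyclotomic field $\Q(\zeta_{p^k})$ and very restricted division parts, whereas when $G$ is not $p$-nilpotent the nontrivial action tying together the $p$-part and a $p'$-part of $G$ forces the appearance of components whose centers are ramified at $p$ through a conductor that is not a $p$-power, so that $p$ is absorbed into the arithmetic (ramification and Schur index) of those components rather than into a matrix degree over a $p$-cyclotomic field; summing dimensions, the part of $\Q G$ that can be assembled into the rational group algebra of a $p$-group then has total $\Q$-dimension strictly below $|G|_p$, and it is exhausted by $\Q(G/O_{p'}(G))$ exactly in the $p$-nilpotent case. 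Making this precise calls for a case analysis organized by the metacyclic data—a normal cyclic subgroup, a cyclic quotient, and the conductors of the monomial characters—and is where the metacyclic hypothesis is genuinely used.
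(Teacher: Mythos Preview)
Your reduction to the Sylow $p$-subgroups for $p\in\pi_G$ is sound, and invoking the $p$-group case at the end is exactly what the paper does. But the heart of your argument---the italicised rigidity statement---is not proved, and the paper proceeds quite differently, avoiding both halves of it.

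For $\pi_G=\pi_H$, the paper does not go through $d_p$ at all. Instead it proves the purely group-theoretic identity
\[
\pi'_G=\{\,p\in\pi(G')\ :\ (G/G')_p\text{ is cyclic}\,\}
\]
for metacyclic $G$. Since $\Q G\cong\Q H$ gives $G/G'\cong H/H'$ (Perlis--Walker) and hence $|G'|=|H'|$, and since $G'$ and $H'$ are cyclic, one gets $G'\cong H'$; the displayed identity then yields $\pi'_G=\pi'_H$ immediately. This is a two-line argument once the identity is established, and it sidesteps the question of whether $d_p(\Q G)<|G|_p$ when $p\notin\pi_G$, which you only gesture at.

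For recovering $\Q G_p$ when $p\in\pi_G$, the paper again avoids your uniqueness claim. It defines the \emph{$p$-components} of $\Q G$ to be the simple factors with $p$-power degree and center embeddable in some $\Q(\zeta_{p^n})$, and shows that their sum is isomorphic to $(\Q G_p)^k$, where $k=1$ if $p=2$ and $k=[G_2:G'_2G_2^2]$ otherwise. The point is that $k$ depends only on the elementary-abelian quotient of a Sylow $2$-subgroup, hence only on $G/G'$; so $k$ is the same for $G$ and $H$, and from $(\Q G_p)^k\cong(\Q H_p)^k$ one reads off $\Q G_p\cong\Q H_p$ by matching multiplicities of simple summands. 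Your approach instead asks that any direct factor of $(\Q G_p)^k$ of dimension $|G_p|$ which happens to be a rational $p$-group algebra must already be isomorphic to $\Q G_p$. That is not obvious: $(\Q G_p)^k$ has each simple factor of $\Q G_p$ repeated $k$ times, and a priori one could assemble a different selection of multiplicities summing to $|G_p|$ into the group algebra of a non-isomorphic $p$-group. You do not address this, and the vague appeal to ramification and conductors does not touch it.

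In short, your outline is plausible but the rigidity statement is the whole difficulty, and you have not supplied it; the paper's route via the characterisation of $\pi'_G$ and the explicit multiplicity $k$ is both shorter and avoids the issue entirely.
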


As a direct consequence of \Cref{MainpiIgual} we obtain the following:

\begin{maincorollary}\label{Main}
If $G$ and $H$ are finite metacylic groups with $\Q G\cong \Q H$ and $G$ is nilpotent then $G\cong H$.
\end{maincorollary}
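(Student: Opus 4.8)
The plan is to derive \Cref{Main} essentially immediately from \Cref{MainpiIgual}, so the argument should be short. First I would record the elementary observation that $\Q G\cong \Q H$ forces $|G|=|H|$: comparing $\Q$-dimensions gives $|G|=\dim_\Q \Q G=\dim_\Q \Q H=|H|$. In particular $G$ and $H$ have exactly the same set of prime divisors, which I will call $\pi$.

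Next I would unwind the meaning of $\pi_G$ under the assumption that $G$ is nilpotent. A finite nilpotent group is the internal direct product of its Sylow subgroups, so for every prime $p$ dividing $|G|$ the product of the Sylow $q$-subgroups with $q\neq p$ is a normal Hall $p'$-subgroup of $G$. Hence $\pi_G=\pi$, and the Hall $\pi_G$-subgroup of $G$ is $G$ itself.

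Now apply \Cref{MainpiIgual}: it yields $\pi_H=\pi_G=\pi$ together with an isomorphism between the Hall $\pi_G$-subgroup of $G$ and the Hall $\pi_G$-subgroup of $H$. Since $\pi$ is precisely the set of primes dividing $|H|$ (by the order equality), the Hall $\pi_H$-subgroup of $H$ is all of $H$; and the Hall $\pi_G$-subgroup of $G$ is all of $G$ by the previous step. Therefore $G\cong H$.

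The only point requiring a little care — though it is not a genuine obstacle — is to be sure that the Hall $\pi_G$-subgroups appearing in \Cref{MainpiIgual} coincide with the whole groups on both sides; this is exactly where $|G|=|H|$ enters, guaranteeing that $\pi_H=\pi_G$ accounts for every prime divisor of $|H|$. (Alternatively, one may first observe that $\pi_H=\pi_G$ equals the set of all prime divisors of $|H|$, so every Hall $p'$-subgroup of $H$ is normal, which already forces $H$ to be nilpotent, and only then invoke \Cref{MainpiIgual}.) Since all the substantive content is contained in \Cref{MainpiIgual}, no further work is needed.
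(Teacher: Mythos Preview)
Your proof is correct and follows exactly the same approach as the paper, which derives \Cref{Main} in one line from \Cref{MainpiIgual} by noting that $G$ nilpotent forces $\pi'_G=\emptyset$. You have simply spelled out the implicit steps (that $|G|=|H|$ and hence $\pi_G=\pi(G)=\pi(H)$, so the Hall $\pi_G$-subgroups on both sides are the full groups), which the paper leaves to the reader.
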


In \Cref{SectionNotation} we introduce the main notation of the paper and review some known results. Suppose that $G$ and $H$ are finite metacyclic groups such that $\Q G$ and $\Q H$ are isomorphic.
In \Cref{SectionpGroups} we prove that if $G$ and $H$ are $p$-groups then they are isomorphic.
In \Cref{SectionNilpotent} we prove \Cref{MainpiIgual}.

Observe that in \Cref{MainpiIgual} and \Cref{Main} it is not sufficient to assume that only one of the two groups $G$ or $H$ is metacyclic because the following groups
$$\GEN{a,b|a^{p^2}=b^p=1, a^b=a^{1+p}}, \quad \GEN{a,b|a^p=b^p=[b,a]^p=[a,[b,a]]=[b,[b,a]]=1}$$
have isomorphic rational group algebras while the first is metacyclic and the second is not.

\section{Notation and preliminaries}\label{SectionNotation}

\subsection{Number theory}

We adopt the convention that $0\not\in \N$ and prime means prime in $\N$.
Let $n\in \N$.
Then $\zeta_n$ denotes a complex primitive $n$-th root of unity and $\pi(n)$ denotes the set of prime divisors of $n$.
If $p$ is prime then $n_p$ denotes the greatest power of $p$ dividing $n$ and $v_p(n)=\log_p(n_p)$.
Moreover $v_p(0)=\infty$.
If $\pi$ is a set of primes then $n_{\pi}=\prod_{p\in \pi} n_p$.
If $m\in \Z$ with $\gcd(m,n)=1$, then $o_n(m)$ denotes the multiplicative order of $m$ modulo $n$, i.e. the smallest positive integer $k$ with $m^k\equiv 1 \mod n$.

If $A$ is a finite set then $|A|$ denotes the cardinality of $A$ and $\pi(A)=\pi(|A|)$.

If $x\in \Z\setminus \{0\}$ then we denote:
$$\Ese{x}{n}=\sum_{i=0}^{n-1} x^i =
\begin{cases} n, & \text{ if } x=1; \\
\frac{x^n-1}{x-1}, & \text{otherwise}.
\end{cases}$$

The notation $\Ese{x}{n}$ occurs in the following statement:
\begin{equation}\label{Potencia}
\text{If } g^h=g^x \text{ with } g \text{ and } h
\text{ elements of a group then } (hg)^n = h^n g^{\Ese{x}{n}}.
\end{equation}
The following lemma collects some properties of the operator $\Ese{-}{-}$.

\begin{lemma}\label{PropEse}
	Let $p$ be a prime, $R\in \Z$, $m\in \N$ and $a=v_p(R-1)\ge 1$.
	Then
	\begin{enumerate}
		\item \label{vpRm-1}
		$v_p(R^m-1)=\begin{cases}
		v_p(R-1)+v_p(m), & \text{if } p\ne 2 \text{ or } a\ge 2; \\
		v_p(R+1)+v_p(m), & \text{if } p=2, a=1 \text{ and } 2\mid m; \\
		1, & \text{otherwise}.
		\end{cases}$
		\item \label{op}
		$o_{p^m}(R)=\begin{cases}
		p^{\max(0,m-v_p(R-1))}, & \text{if } p\ne 2 \text{ or } a\ge 2; \\
		1, & \text{if } p=2, a=1 \text{ and } m\le 1; \\
		2^{\max(1,m-v_2(R+1))}, & \text{otherwise}.
		\end{cases}$
		\item Suppose that $a\le m$ and if $p=2$ then $a\ge 2$. Then the following hold:
		\begin{enumerate}
			\item\label{Potencias} $\{R^x + p^m\Z: x\ge 0\}=\{1+yp^a+p^m\Z : 0\le y < p^{m-a}\}$.
			\item\label{SumaPotencias}  If $n \in \N$ and $n\equiv kp^{m-a}\mod p^m$ then
			$$\Ese{R}{n} \equiv
			\begin{cases}  n+k2^{m-1} \mod 2^m, & \text{if } p=2 \text{ and } m>a;  \\
			n \mod p^m, & \text{otherwise}.\end{cases}$$
		\end{enumerate}
	\end{enumerate}
\end{lemma}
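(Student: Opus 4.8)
The plan is to treat the four numbered items in sequence, since each later item uses the earlier ones. For item \eqref{vpRm-1}, the standard tool is the Lifting-the-Exponent (LTE) lemma: writing $R-1 = p^a u$ with $p \nmid u$, one computes $v_p(R^m-1)$ by factoring $R^m - 1 = (R-1)(R^{m-1}+\cdots+1) = (R-1)\,\Ese{R}{m}$ and analyzing $v_p(\Ese{R}{m})$. When $p \neq 2$ or $a \geq 2$, a direct induction on $v_p(m)$ (peeling off one prime $p$ at a time, using $R^p - 1 = (R-1)\Ese{R}{p}$ and checking $v_p(\Ese{R}{p}) = 1$ via the binomial expansion of $(1+p^a u)^p$) gives $v_p(R^m-1) = a + v_p(m)$. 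The case $p=2$, $a=1$ is the usual exceptional case of LTE: if $m$ is odd then $R^m \equiv R \equiv 3 \pmod 4$ so $v_2(R^m-1)=1$, and if $m$ is even one writes $R^m - 1 = (R-1)(R+1)\Ese{R^2}{m/2}$ and applies the non-exceptional analysis to $R^2$ (which satisfies $v_2(R^2-1) \geq 3 \geq 2$), yielding $v_2(R^m-1) = v_2(R+1) + v_2(m)$.

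Item \eqref{op} is then essentially a restatement: $o_{p^m}(R)$ is the least $k$ with $v_p(R^k - 1) \geq m$. Plugging in the formula from \eqref{vpRm-1}: in the generic case we need $a + v_p(k) \geq m$, i.e. $v_p(k) \geq m - a$, whose least solution is $k = p^{\max(0,m-a)}$; when $p = 2$, $a = 1$, $m \leq 1$ the condition $R \equiv 1 \pmod 2$ is automatic so the order is $1$; in the remaining $2$-adic case we need $k$ even with $v_2(R+1) + v_2(k) \geq m$, giving $k = 2^{\max(1, m - v_2(R+1))}$. This is a short deduction once \eqref{vpRm-1} is in hand.

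For item (3), assume $a \le m$ and $a \ge 2$ when $p=2$. Part \eqref{Potencias}: by \eqref{op} the cyclic group $\langle R \rangle \le (\Z/p^m\Z)^\times$ has order $p^{m-a}$, and every power $R^x$ is $\equiv 1 \pmod{p^a}$ (since $R \equiv 1 \pmod{p^a}$), so $\{R^x + p^m\Z\}$ is contained in the subgroup $1 + p^a\Z/p^m\Z$, which also has order $p^{m-a}$; equality of finite sets of the same size gives the claim, and $\{1 + yp^a + p^m\Z : 0 \le y < p^{m-a}\}$ is just a transversal description of that subgroup. Part \eqref{SumaPotencias}: write $n = kp^{m-a} + t\,o_{p^m}(R)\cdot(\text{something})$ — more precisely, use that $R^{o} \equiv 1 \pmod{p^m}$ with $o = p^{m-a}$ to reduce $\Ese{R}{n}$ modulo the "period", and exploit the telescoping identity $\Ese{R}{n} = \Ese{R}{o}\cdot\Ese{R^{o}}{n/o} + (\text{correction})$ together with $\Ese{R}{o} = \frac{R^{o}-1}{R-1}$, whose $p$-adic valuation is exactly $(m) - a = m - a$ by \eqref{vpRm-1} — wait, one must be careful: $v_p(R^{o}-1) = a + v_p(o) = a + (m-a) = m$, so $v_p(\Ese{R}{o}) = m - a$. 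Then $\Ese{R}{o} \equiv c p^{m-a} \pmod{p^m}$ for a unit $c$, and an elementary computation identifies $c$: generically $c \equiv 1$, giving $\Ese{R}{n} \equiv n \pmod{p^m}$, while for $p = 2$ with $m > a$ the unit $c$ is $\equiv 1 + 2^{a-1}(\ldots)$, producing the extra term $k 2^{m-1}$. The main obstacle I expect is exactly this last bookkeeping in \eqref{SumaPotencias}: pinning down the unit $\Ese{R}{o}/p^{m-a} \bmod p$ (and the refined $\bmod\, p^{?}$ information needed for the $p=2$ correction), since $\Ese{R}{n}$ is only "almost" multiplicative in $n$ and the $p=2$ case has the familiar $2^{m-1}$ anomaly coming from $-1$ being a square root of $1$ that is not $\equiv 1 \pmod{2^a}$ when $a \geq 2$ but close to the order; careful use of \eqref{Potencia} to rewrite sums of the form $\sum R^{i}$ as $\Ese{R}{o}\sum_j (R^{o})^j$ plus a leftover tail, and then induction on $n/o$, should control it.
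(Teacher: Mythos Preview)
The paper does not actually prove this lemma; it simply cites \cite[Lemma~2.1]{GarciadelRio2022} and \cite[Lemma~8.2]{BrocheGLucasdelRio2022}. So there is no ``paper's approach'' to compare against, and your outline is a legitimate self-contained argument. Parts \eqref{vpRm-1} and \eqref{op} are exactly the standard LTE computation you describe, and your treatment of \eqref{Potencias} via the order count is clean and correct.

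The one place where your write-up is hazier than it needs to be is the ``bookkeeping'' you flag in \eqref{SumaPotencias}, namely identifying the unit $c$ in $\Ese{R}{o}\equiv c\,p^{m-a}\pmod{p^m}$ with $o=p^{m-a}$. This is in fact forced by \eqref{Potencias}, which you have already proved: since $\{R^i \bmod p^m : 0\le i<o\}=\{1+yp^a \bmod p^m : 0\le y<o\}$, summing gives
\[
\Ese{R}{o}\equiv \sum_{y=0}^{o-1}(1+yp^a)=o+p^a\frac{o(o-1)}{2}=p^{m-a}+p^{m}\,\frac{o-1}{2}\pmod{p^m}.
\]
If $p$ is odd (or $m=a$, so $o=1$) the second term vanishes modulo $p^m$ and $\Ese{R}{o}\equiv o$. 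If $p=2$ and $m>a$ then $o-1$ is odd and the second term is $\equiv 2^{m-1}$, giving $\Ese{R}{o}\equiv o+2^{m-1}$. Now the hypothesis $n\equiv k p^{m-a}\pmod{p^m}$ forces $o\mid n$; writing $n=qo$ and using $R^{o}\equiv 1\pmod{p^m}$ in your identity $\Ese{R}{qo}=\Ese{R}{o}\cdot \Ese{R^{o}}{q}$ yields $\Ese{R}{n}\equiv q\cdot\Ese{R}{o}\pmod{p^m}$, and since $q\equiv k\pmod{2}$ (indeed $\pmod{p^a}$) the stated formula drops out with no further case analysis. So the obstacle you anticipated dissolves once you feed \eqref{Potencias} back into \eqref{SumaPotencias}.
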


\begin{proof}
See \cite[Lemma~2.1]{GarciadelRio2022} and \cite[Lemma~8.2]{BrocheGLucasdelRio2022}.
\end{proof}

We will need the following formula:
\begin{equation}\label{Sumad2d}
\begin{split}
\sum_{d=0}^n d2^d &= \sum_{d=0}^n \sum_{i=0}^{d-1} 2^d
= \sum_{i=0}^{n-1} 2^{i+1} \sum_{d=i+1}^{n} 2^{d-i-1}
= \sum_{i=0}^{n-1} 2^{i+1} \sum_{j=0}^{n-i-1} 2^j
= \sum_{i=0}^{n-1} 2^{i+1} (2^{n-i}-1) \\
&= n2^{n+1}-2\sum_{i=0}^{n-1}2^i
= n2^{n+1}-2(2^n-1)=(n-1)2^{n+1}+2
\end{split}
\end{equation}


Recall that if $R, n\in\N$  with $\gcd(R,n)=1$ and $i\in \Z$ then the $R$-cyclotomic class modulo $n$ containing $i$ is the subset of $\Z$ formed by the integers $j$ such that $j\equiv iR^k \mod n$ for some $k\ge 0$.
The $R$-cyclotomic classes module $n$ form a partition of $\Z$ and each $R$-cyclotomic class modulo $n$ is a union of cosets modulo $n$.
More precisely, if $i$ and $j$ belong to the same $R$-cyclotomic class then
$\gcd(n,i)=\gcd(n,j)$ and if $d=\frac{n}{\gcd(n,i)}$ then the $R$-cyclotomic class module $n$ containing $i$ is the disjoint union of $i+n\Z,iR+n\Z,\dots,iR^{o_d(R)-1}+n\Z$.
Therefore the number of $R$-cyclotomic classes module $n$ is
\begin{equation}\label{Crn}
C_{R,n}=\sum_{d\mid n} \frac{\varphi(d)}{o_d(R)}.
\end{equation}
We will need a precise expression of this number for the case where $n$ is a power of $p$ and $R\equiv 1 \mod p$.

\begin{lemma}\label{CyclotomicClasses}
	Let $p$ be a prime and $R,m\in \N$ with $R\equiv 1 \mod p$. Then the number of $R$-cyclotomic classes modulo $p^m$ is
	$$C_{R,p^m} = \begin{cases} p^m, & \text{if } m\le v_p(R-1); \\
	1+2^{m-1}, & \text{if } p=2 \text{ and } 2\le m < v_2(R+1); \\
	1+2^{v_2(R+1)-1}(1+m-v_2(R+1)), & \text{if } p=2 \text{ and } 2\le v_2(R+1)\le m; \\
	p^{v_p(R-1)-1} (p+ (p-1) (m-v_p(R-1))), & \text{otherwise}.
	\end{cases}$$
\end{lemma}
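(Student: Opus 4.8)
The plan is to evaluate the sum in~\eqref{Crn} directly. Since the positive divisors of $p^m$ are exactly $p^0,p^1,\dots,p^m$, formula~\eqref{Crn} gives
\[
C_{R,p^m}=\sum_{j=0}^m \frac{\varphi(p^j)}{o_{p^j}(R)},
\]
where $\varphi(p^0)=1$ and $\varphi(p^j)=p^{j-1}(p-1)$ for $j\ge 1$. The quantities $o_{p^j}(R)$ are given by the formula for $o_{p^m}(R)$ in \Cref{PropEse}, expressed in terms of $a:=v_p(R-1)$ (with the convention $a=\infty$ if $R=1$, which forces us into the first case of the statement), and when $p=2$ and $a=1$ also in terms of $b:=v_2(R+1)$, which then satisfies $b\ge 2$. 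Substituting these values, the proof reduces to summing a handful of finite geometric series, with the case split dictated by where the index $j$ sits relative to $a$ (resp. relative to $b$).

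For the case $p\neq 2$ or $a\ge 2$, \Cref{PropEse} gives $o_{p^j}(R)=p^{\max(0,\,j-a)}$, so $o_{p^j}(R)=1$ for $j\le a$ and $o_{p^j}(R)=p^{\,j-a}$ for $j>a$. If $m\le a$ every summand is $\varphi(p^j)$ and the sum telescopes: $1+(p-1)(1+p+\cdots+p^{m-1})=p^m$, which is the first case. If $m>a$, I would split the sum at $j=a$: the block $0\le j\le a$ again telescopes to $p^a$, while for $a<j\le m$ each summand equals $p^{j-1}(p-1)/p^{\,j-a}=p^{a-1}(p-1)$ and there are $m-a$ of them, so the total is $p^{a}+(m-a)p^{a-1}(p-1)=p^{\,a-1}\bigl(p+(p-1)(m-a)\bigr)$, which is the last case. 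For the case $p=2$, $a=1$, \Cref{PropEse} gives $o_{2^j}(R)=1$ for $j\le 1$ and $o_{2^j}(R)=2^{\max(1,\,j-b)}$ for $j\ge 2$. If $m\le 1$ the sum is $2^m$ as before. If $2\le m<b$ then $o_{2^j}(R)=2$ for all $j$ with $2\le j\le m$, so $C_{R,2^m}=1+1+\sum_{j=2}^m 2^{j-2}=2+(2^{m-1}-1)=1+2^{m-1}$. Finally, if $2\le b\le m$, I would split at $j=b$: for $2\le j\le b$ the order is $2$, contributing $1+1+\sum_{j=2}^b 2^{j-2}=1+2^{b-1}$, and for $b<j\le m$ each summand equals $2^{j-1}/2^{\,j-b}=2^{b-1}$ with $m-b$ such terms, giving $1+2^{b-1}+(m-b)2^{b-1}=1+2^{\,b-1}(1+m-b)$, the third case.

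Nothing here is genuinely difficult; the only thing to be careful about is the bookkeeping at the boundaries — ensuring the index $j=a$ (resp. $j=b$) is counted exactly once, that the degenerate ranges ($m\le a$, $m\le 1$, or an empty "saturated" tail when $m=b$) are handled, and that the $R=1$ possibility is subsumed under $m\le v_p(R-1)=\infty$. So I expect the write-up to be short, the main (mild) obstacle being a clean invocation of \Cref{PropEse} that avoids off-by-one slips in the exponents.
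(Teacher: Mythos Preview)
Your proposal is correct and follows essentially the same approach as the paper: both evaluate the sum in~\eqref{Crn} over the divisors $p^0,\dots,p^m$, invoke \Cref{PropEse}.\eqref{op} to compute $o_{p^j}(R)$, and split the resulting sum at $j=v_p(R-1)$ (respectively at $j=v_2(R+1)$ in the $p=2$, $v_2(R-1)=1$ case) before summing the geometric and constant pieces. The only cosmetic difference is that you abbreviate $a=v_p(R-1)$ and $b=v_2(R+1)$, whereas the paper carries these valuations explicitly throughout.
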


\begin{proof}
	If $m\le v_p(R-1)$ then $o_d(R)=1$ for every divisor $d$ of $m$ and hence every $R$-cyclotomic class module $p^m$ is formed by one coset modulo $p^m$. Therefore, in that case $C_{R,p^m}=p^m$. Suppose otherwise that $m>v_p(R-1)$.

	Suppose that either $p$ is odd or $p=2$ and $R\equiv 1 \mod 4$.
	Using \Cref{PropEse}.\eqref{op} and \eqref{Crn} we have
	\begin{eqnarray*}
		C_{R,p^m}&=&\sum_{k=0}^m \frac{\varphi(p^k)}{p^{\max(0,k-v_p(R-1))}} =
		1+(p-1)\left(\sum_{k=1}^{v_p(R-1)} p^{k-1} + \sum_{k=v_p(R-1)+1}^m p^{v_p(R-1)-1} \right) \\
		&=& p^{v_p(R-1)} + (p-1) (m-v_p(R-1)) p^{v_p(R-1)-1}
		= p^{v_p(R-1)-1} (p+ (p-1) (m-v_p(R-1)))
	\end{eqnarray*}

	Otherwise, $p=2$ and $R\equiv -1\mod 4$. Then $2\le v_2(R+1)$ and $1=v_2(R-1)<m$.
	Using now \Cref{PropEse}.\eqref{op} and \eqref{Crn} we have
	$C_{R,2^m}= 2+ \sum_{k=2}^m \frac{\varphi(2^k)}{2^{\max(1,k-v_2(R+1))}}$
	Thus, if $m< v_2(R+1)$ the $C_{R,2^m}=2+\sum_{k=2}^m 2^{k-2} = 1+2^{m-1}$.
	Otherwise, i.e. if $m\ge v_2(R+1)$ then
	\begin{eqnarray*}
		C_{R,2^m} &=& 2+\sum_{k=2}^{v_2(R+1)} 2^{k-2} + \sum_{k=v_2(R+1)+1}^m 2^{v_2(R+1)-1}
		= 1+ 2^{v_2(R+1)-1}+(m-v_2(R+1))2^{v_2(R+1)-1} \\ &=& 1+2^{v_2(R+1)-1}(1+m-v_2(R+1)).
	\end{eqnarray*}
\end{proof}


\subsection{Group theory}

By default all the groups in this paper are finite.
We use standard notation for a group $G$ and $g,h\in G$:
$Z(G)=$ center of $G$, $G'=$ commutator subgroup of $G$, $\Aut(G)=$ group of automorphisms of $G$, $|g|=$ order of $g$, $g^h=g^{-1}hg$, $[g,h]=g^{-1}g^h$.
The notation $H\le G$ and $N\unlhd G$ means that $H$ is a subgroup of $G$ and $N$ is a normal subgroup of $G$.
If $H$ is a subgroup then $[G:H]$ denotes index of $H$ in $G$,
$N_G(H)$ the normalizer of $H$ in $G$ and $\Core_G(H)$ the core of $H$ in $G$, i.e. the greatest subgroup of $H$ that is normal in $G$.

If $\pi$ is a set of primes then $g_{\pi}$ and $g_{\pi'}$ denote the $\pi$-part and $\pi'$-part of $g$, respectively.
When $p$ is a prime we rather write $g_p$ and $g_{p'}$ than $g_{\{p\}}$ and $g_{\{p\}'\}}$.

Let $m$ be a positive integer. We let $C_m$ denote a generic cyclic group of order $m$.
For an integer $x$ coprime with $m$, let $\alpha_x$ denote the automorphism of $C_m$ given by $\alpha_x(a)=a^x$ for every $a\in A$ and $\sigma_x$ the automorphism of $\Q(\zeta_m)$ given by $\sigma_x(\zeta_m)=\zeta_m^x$.
Then $x\mapsto \alpha_x$ and $x\mapsto \sigma_x$ define isomorphisms
$\alpha:\Z_m^* \rightarrow \Aut(C_m)$ and $\sigma:\Z_m^*\rightarrow \Gal(\Q(\zeta_m)/\Q)$, where $\Z_m^*$ denotes the group of units of $\Z/m\Z$.
We are abusing the notation by treating elements of $\Z_m^*$ as integers.
In particular, $\alpha_x\mapsto \sigma_x$ defines an isomorphism $\Aut(C_m)\rightarrow \Gal(\Q(\zeta_m)/\Q)$. We abuse the notations by considering the latter as an identification so that if $\Gamma$ is a subgroup of $\Aut(C_m)$ then
$$\Q(\zeta_m)^{\Gamma}=\{a \in \Q(\zeta_m) : \sigma_x(a)=a \text{ for all } \alpha_x \in \Gamma\},$$
the Galois correspondent of $\Gamma$ considered as a subgroup of $\Gal(\Q(\zeta_m)/\Q)$.

\subsection{The finite metacyclic $p$-groups}

The finite metacyclic groups were classified by Hempel \cite{Hempel2000}.
Previously the finite metacyclic $p$-groups were classified by several means \cite{Zassenhaus1958,Lindenberg1971,Hall1959,Beyl1972,King1973,Liedahl1994,Liedahl1996,NewmanXu1988,Redei1989,Sim1994}. For our purpose we need the description of the finite metacyclic groups in terms of group invariants given in \cite{GarciadelRio2022} for the special case of $p$-groups. More precisely when \cite[Corollary~4.1]{GarciadelRio2022} is specialized to finite metacyclic $p$-groups one obtains the following:

\begin{theorem}\label{ClasiMeta}
Let $p$ be a prime integer. Then every finite metacyclic $p$-group is isomorphic to a group given by the following presentation
$$\mathcal{P}_{p,\mu,\nu,\sigma,\rho,\epsilon}=\GEN{a,b \mid a^{p^\mu}=1, b^{p^{\nu}}=a^{p^\sigma}, b^a=a^{\epsilon+p^\rho}}.$$
for unique non-negative integers $\mu,\nu,\sigma$ and $\rho$ and a unique $\epsilon \in \{1,-1\}$ satisfying the following conditions:
\begin{enumerate}[label=(\Alph*)]

    \item\label{rhomu} $\rho\le \mu$, if $\mu\ge 1$ then $\rho\ge 1$ and if $p=2\ge \mu$ then $\rho\ge 2$.
	\item\label{epsilon1} If $\epsilon=1$ then $\rho\le \sigma\le \mu \le \rho+\sigma$ and $\sigma\le \nu$.
	\item\label{epsilon-1} If $\epsilon=-1$ then
	\begin{enumerate}
		\item $p=2\le \rho\le \mu$, $\nu\ge 1$, $\mu-1\le \sigma\le \mu\le \rho+\nu\ne \sigma$ and
		\item if $2\ge \nu$ and $3\ge \mu$ then $\rho\le \sigma$,
	\end{enumerate}
\end{enumerate}
\end{theorem}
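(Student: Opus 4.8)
The plan is to obtain this statement by specializing the general classification of finite metacyclic groups recorded in \cite[Corollary~4.1]{GarciadelRio2022} to the case where the ambient group is a $p$-group. In that corollary a finite metacyclic group is described by a presentation of the same shape $\GEN{a,b\mid a^m=1,\ b^n=a^t,\ b^a=a^r}$ together with a list of arithmetic normalization conditions on $(m,n,t,r)$, and a uniqueness statement asserting that two such tuples give isomorphic groups if and only if they coincide. So the first step is simply to impose that $m,n,t$ are powers of $p$, say $m=p^\mu$, $n=p^\nu$, $t=p^\sigma$, and to analyze what the conditions of \cite[Corollary~4.1]{GarciadelRio2022} become; here the subtlety is the exponent $r$ of the action, which in general need not be a prime power but which, for a $p$-group, must reduce modulo $p^\mu$ to a unit of $\Z_{p^\mu}^*$, and the normalization there is chosen so that $r$ can be written as $\epsilon+p^\rho$ with $\epsilon\in\{1,-1\}$ and $\rho$ controlled. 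The second step is to check that every finite metacyclic $p$-group indeed arises this way (the existence half): this is immediate from the general classification once we observe that a metacyclic $p$-group has a cyclic normal subgroup and cyclic quotient both of $p$-power order, so the general presentation specializes.

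The third and more delicate step is to translate the normalization inequalities. One needs to verify that the conditions labeled \ref{rhomu}, \ref{epsilon1}, \ref{epsilon-1} are exactly what the general conditions of \cite[Corollary~4.1]{GarciadelRio2022} say in the $p$-group case. The case split on $\epsilon$ corresponds to whether the action $b^a=a^r$ has $r\equiv 1$ or $r\equiv -1$ modulo the relevant small power of $p$; the appearance of $p=2$ in \ref{epsilon-1} reflects the familiar fact that $\Z_{2^k}^*$ is not cyclic for $k\ge 3$, so that an order-$2$ automorphism acting as $-1$ (rather than as $1+2^{k-1}$ or $-1+2^{k-1}$) is a genuinely distinct possibility only at the prime $2$. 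The bounds $\rho\le\sigma\le\mu\le\rho+\sigma$ and $\sigma\le\nu$ (when $\epsilon=1$), and their $\epsilon=-1$ analogues, come from requiring the relations to be consistent (e.g. $a^{p^\sigma}=b^{p^\nu}$ must be central, forcing $\sigma\ge\rho$ essentially, and the order of $a$ being exactly $p^\mu$ forces $\mu\le\rho+\sigma$) together with the chosen canonical representative in each isomorphism class. I would carry this out by listing the conditions of the general corollary, substituting prime-power parameters, and simplifying each inequality, keeping careful track of the $p=2$ exceptional clauses (the $\rho\ge 2$ when $p=2\ge\mu$, and the extra condition (b) in \ref{epsilon-1} when $\nu\le 2$ and $\mu\le 3$).

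The main obstacle I anticipate is bookkeeping rather than conceptual: making sure the normalization of $r$ as $\epsilon+p^\rho$ is compatible with the normalization used in \cite{GarciadelRio2022}, and that the resulting uniqueness of $(\mu,\nu,\sigma,\rho,\epsilon)$ follows from (and is not strictly weaker than) the uniqueness asserted there. In particular one must confirm that no two distinct admissible tuples $(\mu,\nu,\sigma,\rho,\epsilon)$ can collapse to the same tuple in the general parametrization, and conversely that the general admissibility conditions, intersected with ``all parameters are powers of $p$,'' carve out exactly the set described by \ref{rhomu}--\ref{epsilon-1}; this amounts to a finite and essentially mechanical verification, but it is where an error would most easily creep in. Since \cite[Corollary~4.1]{GarciadelRio2022} already supplies both existence and uniqueness in the general case, no new group-theoretic input is needed, and the proof reduces to this specialization and simplification.
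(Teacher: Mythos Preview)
Your proposal is correct and follows exactly the approach of the paper: the paper does not give an independent proof of this theorem but simply states that it is the specialization of \cite[Corollary~4.1]{GarciadelRio2022} to finite metacyclic $p$-groups. Your outline of how that specialization and the attendant bookkeeping would be carried out is accurate, though the paper itself does not spell any of it out.
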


\subsection{Wedderburn decomposition of rational group algebras}

If $A$ is a finite dimensional central simple $F$-algebra for $F$ a field then $\Deg(A)$ denotes the degree of $A$, i.e. $\dim_F A=\Deg(A)^2$ (cf. \cite{Pierce1982}).

Let $F/K$ be a finite Galois field extension and let $G=\Gal(F/K)$.
Let $\U(F)$ denotes the multiplicative group of $F$.
If $f:G\times G\rightarrow \U(F)$ is a $2$-cocycle then $(F/K,f)$ denotes the crossed product
	$$(F/K,f)=\sum_{\alpha\in G} t_\alpha F, \quad xt_\alpha=t_\alpha \alpha(x), \quad  t_{\alpha}t_{\beta}=t_{\alpha\beta}f(\alpha,\beta), \quad (x\in F, \alpha,\beta\in G).$$
It is well known that $(F/K,f)$ is a central simple $K$-algebra and if $g$ is another $2$-cocycle then $(F/K,f)$ and $(F/K,g)$ are isomorphic as $K$-algebras if and only if $gf^{-1}$ is a $2$-coboundary. Therefore if $\overline f\in H^2(G,F)$ is represented by the coboundary $f$ then we denote $(F/K,\overline f)=(F/K,f)$.

If $G$ is cyclic of order $n$, generated by $\alpha$ and $a\in \U(K)$ then there is a cocycle $f:G\times G\rightarrow \U(K)$ given by
	$$f(\alpha^i,\alpha^j)=\begin{cases} 1, & \text{if } 0\le i,j,i+j<n; \\
	a, & \text{if } 0\le i,j<n\le i+j \end{cases}$$
Then the crossed product algebra $(F/K,f)$, is said to be a cyclic algebra, it is usually denoted $(F/K,\alpha,a)$ and it can be described as follows:
	$$(F/K,\alpha,a)=\sum_{i=0}^{n-1} u^i F = F[u\mid xu=u\alpha(x), u^n=a]$$

If $A$ is a semisimple ring then $A$ is a direct sum of central simple algebras.
This expression is called the \emph{Wedderburn decomposition} of $A$ and its simple factors  are called the \emph{Wedderburn components} of $A$.
The Wedderburn components of $A$ are the direct summands of the form $Ae$ with $e$ a primitive central idempotent of $A$.

Let $G$ be a finite group. Then $\Q G$ is semisimple and the center of each component $A$ of $\Q G$ is isomorphic to the field of character values $\Q(\chi)$ of any irreducible character $\chi$ of $G$ with $\chi(A)\ne 0$.
It is well known that $\Q(\chi)$ is a finite abelian extension of $\Q$ inside $\C$ and henceforth it is the unique subfield of $\C$ isomorphic to $\Q(\chi)$.
We will abuse the notation and consider $Z(A)$ as equal to $\Q(\chi)$.

An important tool for us is a technique to describe the Wedderburn decomposition of $\Q G$ introduced in \cite{OlivieridelRioSimon2004}. See also \cite[Section~3.5]{JespersdelRioGRG1}.
We recall here its main ingredients.

If $H$ is a subgroup of $G$ then $\widehat{H}$ denotes the element $|H|^{-1}\sum_{h\in H} h$ of the rational group algebra $\Q G$.
It is clear that $\widehat{H}$ is an idempotent of $\Q G$ and it is central in $\Q G$ if and only if $H$ is normal in $G$.

Let $N$ be a normal subgroup of $G$. Then the kernel of the natural homomorphism
$\Q G\rightarrow \Q(G/N)$ is $\Q G (1-\widehat{N})= \sum_{n\in N\setminus 1} \Q G(n-1)$.
Therefore $\Q G=\Q G\widehat{N} \oplus \Q G(1-\widehat{N})$ and
$\Q(G/N)\cong \Q G\widehat{N}$. Thus $\Q(G/N)$ is isomorphic to the direct sum of the Wedderburn components of $\Q G$ of the form $\Q Ge$ with $e$ a primitive central idempotent of $\Q G$ with $e\widehat{N}=e$.

We denote
	$$\varepsilon(G,N)=\begin{cases} \widehat{G}, & \text{if } G=N; \\
	\prod_{D/N\in M(G/N)} (\widehat{N}-\widehat{D}), & \text{otherwise}. \end{cases}$$
where $M(G/N)$ denote the set of minimal normal subgroups of $G$.
Clearly $\varepsilon(G,N)$ is a central idempotent of $\Q G$.

If $(H,K)$ is a pair of subgroups of $G$ with $K\unlhd H$ then we denote
	$$e(G,H,K) = \sum_{gC_G(\varepsilon(H,K))\in G/C_G(\varepsilon(H,K))} \varepsilon(H,K)^g.$$
Observe that $e(G,H,K)$ belongs to the center of $\Q G$. If moreover, $\varepsilon(H,K)^g\varepsilon(H,K)=0$ for every $g\in G\setminus C_G(\varepsilon(H,K))$, then $e(G,H,K)$ is an idempotent of $\Q G$.

A \emph{strong Shoda pair} of $G$ is a pair $(H,K)$ of subgroups of $G$ satisfying the following conditions:
\begin{itemize}
\item[(SS1)] $K\subseteq H \unlhd N_G(K)$,
\item[(SS2)] $H/K$ is cyclic and maximal abelian in $N_G(K)/K$,
\item[(SS3)] $\varepsilon(H,K)^g\varepsilon(H,K)$ for every $g\in G\setminus C_G(\varepsilon(H,K))$.
\end{itemize}

\begin{remark}\label{SSPRemark}
Suppose that $(H,K)$ is a strong Shoda pair of $G$ and let $m=[H:K]$ and $N=N_G(K)$.
Then $H/K\cong C_m$ and the action of $N$ by conjugation on $H$ induces a faithful action of $N/H$ on $\Q(\zeta_m)$. More precisely, if $n\in N$ then $h^nK=\alpha_r(hK)$ for some integer $r$, with $\gcd(r,m)=1$.
The map $nH\rightarrow \sigma_r$ defines an injective homomorphism $\alpha:N/H\rightarrow \Aut(\Q(\zeta_m))$. Let $F_{G,H,K}=\Q(\zeta_m)^{\Imagen \alpha}$. Then we have a short exact sequence \cite[Theorem~3.5.5]{JespersdelRioGRG1}:
	$$1\rightarrow H/K \cong \GEN{\zeta_m} \rightarrow N/K \rightarrow N/H\cong \Gal(\Q(\zeta_m)/F_{G,H,K})\rightarrow 1$$
which induces an element $\overline{f}\in H^2(N/H,\Q(\zeta_m))$. More precisely from an election of a set of representatives $\{c_u : u\in N/H\}$ of $H$ cosets in $N$, we define $f(u,v)=\zeta_m^k$ if $c_uc_v=c_{uv}h^k$. This defines an element of $H^2(N/H,\Q(\zeta_m))$ because another election yields to another $2$-cocycle differing in a $2$-coboundary.
Associated to $\overline{f}$ one has the crossed product algebra
\begin{equation*}
\begin{split}
& A(G,H,K)=(\Q(\zeta_m)/F_{G,H,K},\overline{f}) = \oplus_{u\in N/H} t_u \Q(\zeta_m), \\
& x t_u=t_u\sigma_u(x), \quad t_ut_v=t_{uv}f(u,v), \quad (x\in \Q(\zeta_m), u,v\in N/K).
\end{split}
\end{equation*}
\end{remark}

\begin{proposition}\cite[Proposition~3.4]{OlivieridelRioSimon2004} \cite[Theorem~3.5.5]{JespersdelRioGRG1} \label{SSPAlg}
Let $(H,K)$ be a strong Shoda pair of $G$ and let $m=[H:K]$, $n=[G:N_G(K)]$ and $e=e(G,H,K)$. Then $e$ is a primitive central idempotent of $\Q G$ and  $\Q Ge\cong M_n(A(G,H,K))$. Moreover,
$\Deg(\Q Ge)=[G:H]$, $Z(\Q Ge(G,H,K))\cong F_{G,H,K}$ and $\{g\in G : ge=e\}=\Core_G(K)$.
\end{proposition}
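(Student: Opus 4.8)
The plan is to study $e$ in two stages: first understand $\varepsilon:=\varepsilon(H,K)$ inside $\Q N$, where $N:=N_G(K)$, and then pass to $\Q G$ by an orbit-sum (Clifford-theoretic) argument. The technical heart will be the identification $\Q N\varepsilon\cong A(G,H,K)$; once that is available, the rest is essentially formal.

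\emph{Stage 1: the simple component $\Q N\varepsilon$.} First I would record that $\varepsilon$ is a primitive central idempotent of $\Q H$ with $\Q H\varepsilon\cong\Q(\zeta_m)$: since $K\unlhd H$ and $H/K$ is cyclic of order $m$, the natural surjection $\Q H\to\Q(H/K)\cong\Q C_m$ sends $\varepsilon$ to $\varepsilon(H/K,1)=\prod_{p\mid m}(1-\widehat{C_p})$, which is precisely the primitive central idempotent of $\Q C_m$ with simple component $\Q(\zeta_m)$; along the way one checks that $\{h\in H:h\varepsilon=\varepsilon\}=K$. Next, since $H/K$ is maximal abelian in $N/K$ one gets $C_{N/K}(H/K)=H/K$, and hence conjugation produces the injection $N/H\hookrightarrow\Aut(C_m)\cong\Gal(\Q(\zeta_m)/\Q)$ with fixed field $F_{G,H,K}$. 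As $K$ and $H$ are normal in $N$ and conjugation by $N$ permutes the minimal normal subgroups of $H/K$, the idempotent $\varepsilon$ is $N$-invariant, hence central in $\Q N$. Now, choosing coset representatives $\{c_u:u\in N/H\}$ and using $\Q N=\bigoplus_u c_u\Q H$, multiplication by $\varepsilon$ yields $\Q N\varepsilon=\bigoplus_u c_u\Q(\zeta_m)$, in which conjugation by $c_u$ realizes the Galois automorphism attached to $uH$ and the relations $c_uc_v=c_{uv}h_{u,v}$ produce (up to coboundary) the factor set $\overline{f}$ of \Cref{SSPRemark}; that is, $\Q N\varepsilon\cong(\Q(\zeta_m)/F_{G,H,K},\overline{f})=A(G,H,K)$. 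This identification is the step I expect to be most delicate: one must keep the factor-set bookkeeping coherent and use that the acting group is the \emph{whole} of $\Gal(\Q(\zeta_m)/F_{G,H,K})$, which is exactly where the maximal abelianness of $H/K$ in $N/K$ enters. Since $(\Q(\zeta_m)/F_{G,H,K},\overline{f})$ is central simple, it follows that $\varepsilon$ is a primitive central idempotent of $\Q N$, that $Z(\Q N\varepsilon)=F_{G,H,K}$, and that $\Deg(\Q N\varepsilon)=[\Q(\zeta_m):F_{G,H,K}]=[N:H]$.

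\emph{Stage 2: passing from $N$ to $G$.} I would first show $C_G(\varepsilon)=N$. The inclusion $N\subseteq C_G(\varepsilon)$ is the $N$-invariance just noted; conversely, if $g$ fixes $\varepsilon$, then conjugating the identity $\widehat K\varepsilon=\varepsilon$ gives $\widehat{K^g}\varepsilon=\varepsilon\ne 0$, and since the support of $\varepsilon$ is contained in $H$ and is invariant under left multiplication by $K^g$, we get first $K^g\le H$ and then $K^g\subseteq\{h\in H:h\varepsilon=\varepsilon\}=K$, so $K^g=K$ by symmetry, i.e.\ $g\in N$. Now $\varepsilon$ is a primitive central idempotent of $\Q N$ with $G$-stabilizer $N=C_G(\varepsilon)$, and $\varepsilon^g\varepsilon=0$ for $g\in G\setminus N$ by (SS3); the standard orbit-sum lemma (the simple $\Q N\varepsilon$-module induces irreducibly to $G$, and $\Q Ge\cong M_{[G:N]}(\Q N\varepsilon)$) then shows that $e=\sum_{gN\in G/N}\varepsilon^g$ is a primitive central idempotent of $\Q G$ with $\Q Ge\cong M_{[G:N]}(\Q N\varepsilon)\cong M_n(A(G,H,K))$. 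Taking centers gives $Z(\Q Ge)\cong F_{G,H,K}$, and comparing $F_{G,H,K}$-dimensions gives $\Deg(\Q Ge)=n\,[N:H]=[G:H]$.

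\emph{Stage 3: the stabilizer of $e$.} For $\Core_G(K)\subseteq\{g\in G:ge=e\}$: from $\Core_G(K)\le K\le H$ one has $\varepsilon^h\,\widehat{\Core_G(K)}=\varepsilon^h$ for every $h\in G$, and since $\Core_G(K)\unlhd G$ the idempotent $\widehat{\Core_G(K)}$ is central in $\Q G$ and is fixed by left multiplication by any $g\in\Core_G(K)$; hence such $g$ fixes every summand $\varepsilon^h$ of $e$, so $ge=e$. For the reverse inclusion I would identify $e$ with the rational primitive central idempotent of $\Q G$ attached to the irreducible character $\chi=\mathrm{Ind}_N^G\big(\mathrm{Ind}_H^N\theta\big)$, where $\theta$ is a linear character of $H$ with $\ker\theta=K$ (irreducibility of $\mathrm{Ind}_H^N\theta$ comes from $C_{N/K}(H/K)=H/K$, and that of $\chi$ from (SS3)); then $\{g\in G:ge=e\}=\ker\chi$, and applying the identity $\ker\mathrm{Ind}=\Core(\ker)$ twice, together with $\Core_G(\Core_N(K))=\Core_G(K)$, gives $\ker\chi=\Core_G(K)$, which is the remaining assertion.
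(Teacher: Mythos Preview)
The paper does not supply its own proof of this proposition: it is quoted from \cite{OlivieridelRioSimon2004} and \cite{JespersdelRioGRG1} and used as a black box. Your proposal is a correct and complete reconstruction of the standard argument behind those references: first identify $\Q N\varepsilon(H,K)$ with the crossed product $A(G,H,K)$ via the coset decomposition of $N$ over $H$ (using maximal abelianness of $H/K$ in $N/K$ to get the full Galois group), then induce from $N$ to $G$ using $C_G(\varepsilon)=N$ and the orthogonality (SS3) to obtain $\Q Ge\cong M_n(\Q N\varepsilon)$, and finally read off the degree, center, and kernel. The only places worth tightening are cosmetic: in Stage~2 you might state explicitly that the nonzero coefficient of $1$ in $\varepsilon$ is what forces $K^g\subseteq H$, and in Stage~3 note that $\Core_N(K)=K$ (since $K\unlhd N$), which makes the double-core computation immediate. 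Otherwise this matches the argument in the cited sources.
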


In the particular case where $G$ is metabelian all the Wedderburn components of $\Q G$ are of the form $A(G,H,K)$ for some special kind of strong Shoda pairs of $G$. More precisely we have the following (see \cite[Theorem~4.7]{OlivieridelRioSimon2004} or \cite[Theorem~3.5.12]{JespersdelRioGRG1}):

\begin{theorem}\label{SSPMetabelian}
Let $G$ be a finite group and let $A$ be a maximal abelian subgroup of $G$ containing $G'$.
Then every Wedderburn component of $\Q G$ is of the form $\Q Ge(G,H,K)$ for subgroups $H$ and $K$ satisfying the following conditions:
\begin{enumerate}
	\item $H$ is a maximal element in the set $\{B\le G : A\le B \text{ and } B'\le K\le B\}$.
	\item $H/K$ is cyclic.
\end{enumerate}
Moreover every pair $(H,K)$ satisfying (1) and (2) is a strong Shoda pair of $G$ and hence $\Q Ge(G,H,K)\cong M_n(A(G,H,K))$ with $n=[G:N_G(K)]$.
\end{theorem}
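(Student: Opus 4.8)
The plan is to reprove this along the lines of \cite[Theorem~4.7]{OlivieridelRioSimon2004}. Since $G'\le A$, the subgroup $A$ is normal in $G$ and $G/A$ is abelian, so $G$ is metabelian. The Wedderburn components of $\Q G$ are exactly the algebras $\Q Ge_\Q(\chi)$ with $\chi\in\Irr(G)$, where $e_\Q(\chi)=\sum_{\sigma\in\Gal(\Q(\chi)/\Q)}e_\C(\chi^\sigma)$; and, by the strong Shoda pair machinery recalled before \Cref{SSPAlg} (together with \cite{OlivieridelRioSimon2004}), if $(H,K)$ is a strong Shoda pair of $G$ and $\lambda$ is a linear character of $H$ with $\ker\lambda=K$ and $\operatorname{Ind}_H^G\lambda\in\Irr(G)$, then $e(G,H,K)=e_\Q(\operatorname{Ind}_H^G\lambda)$ and $\Q Ge(G,H,K)\cong M_n(A(G,H,K))$ with $n=[G:N_G(K)]$. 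So it suffices to establish: (a) every $\chi\in\Irr(G)$ is of the form $\operatorname{Ind}_H^G\lambda$ for a linear character $\lambda$ of some subgroup $H\ge A$, with $K:=\ker\lambda$ such that $(H,K)$ satisfies (1) and (2); and (b) every pair satisfying (1) and (2) is a strong Shoda pair.

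For (a) I would first prove, by induction on $[G:A]$, that every $\chi\in\Irr(G)$ equals $\operatorname{Ind}_H^G\lambda$ with $A\le H\le G$ and $\lambda$ linear. If $A=G$ this is clear with $H=G$. Otherwise pick $A\le M\unlhd G$ with $[G:M]$ a prime $p$; then $A$ is still maximal abelian in $M$ and $M'\le A$, so the inductive hypothesis applies to $M$. Applying Clifford's theorem to $M\unlhd G$: if $\chi=\operatorname{Ind}_M^G\theta$ with $\theta\in\Irr(M)$, writing $\theta=\operatorname{Ind}_{H_1}^M\mu$ with $A\le H_1$ and $\mu$ linear (induction) gives $\chi=\operatorname{Ind}_{H_1}^G\mu$; if instead $\chi|_M=\theta\in\Irr(M)$, write again $\theta=\operatorname{Ind}_{H_1}^M\mu$, note $H_1\unlhd G$ (as $H_1\supseteq A\supseteq G'$), so $I_G(\mu)\cap M=I_M(\mu)=H_1$ and $[I_G(\mu):H_1]\mid p$; since $\langle\chi|_{H_1},\mu\rangle=1$ while $\operatorname{Ind}_{H_1}^G\mu$ would otherwise be irreducible of degree $p\,\chi(1)>\chi(1)$, we must have $[I_G(\mu):H_1]=p$, whence $\mu$ extends to a linear character of $H:=I_G(\mu)$ and a multiplicity count gives $\chi=\operatorname{Ind}_H^G\lambda$ for a suitable such extension $\lambda$, with $A\le H$. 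Having $\chi=\operatorname{Ind}_H^G\lambda$ with $A\le H$ and $\lambda$ linear, put $K=\ker\lambda$: then $H'\le K$ and $H/K\cong\lambda(H)$ is cyclic, so (2) holds, and $H$ is automatically maximal in $\{B\le G:A\le B,\ B'\le K\le B\}$: if some $B$ in that set strictly contained $H$ then $H\unlhd B$, $\lambda$ would be $B$-invariant (as $B'\le\ker\lambda$), $B/K$ abelian, so $\lambda$ would extend to a linear $\lambda'$ of $B$ with $\langle\chi|_B,\lambda'\rangle=1$, making $\chi$ a constituent of $\operatorname{Ind}_B^G\lambda'$ of degree $[G:B]<[G:H]=\chi(1)$, which is impossible. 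This proves (a).

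For (b), let $(H,K)$ satisfy (1) and (2). Since $H\supseteq A\supseteq G'$ we get $H\unlhd G$, so $K\subseteq H\unlhd N_G(K)$ and (SS1) holds; moreover $[A,K]\le H'\le K$ gives $A\le N_G(K)$, and if $H/K$ sat properly inside an abelian subgroup $C/K$ of $N_G(K)/K$ then $C'\le K$ and $A\le H\le C$ would contradict (1), so $H/K$ is maximal abelian in $N_G(K)/K$ and (SS2) holds. For (SS3): if $g\notin C_G(\varepsilon(H,K))$ then $g\notin N_G(K)$ (since $n\in N_G(K)$ gives $\varepsilon(H,K)^n=\varepsilon(H^n,K^n)=\varepsilon(H,K)$, so $N_G(K)\le C_G(\varepsilon(H,K))$); then $K\ne K^g$ and both are normal in $H$, so $\langle K,K^g\rangle/K$ is a nontrivial subgroup of the cyclic group $H/K$ and contains a minimal subgroup $D/K$, which supplies a factor $\widehat K-\widehat D$ in $\varepsilon(H,K)$; since $\widehat{K^g}(\widehat K-\widehat D)=\widehat{\langle K,K^g\rangle}-\widehat{\langle K,K^g\rangle}=0$ and all these are central idempotents of $\Q H$ with $\widehat{K^g}\varepsilon(H,K)^g=\varepsilon(H,K)^g$, we get $\varepsilon(H,K)^g\varepsilon(H,K)=0$. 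Thus $(H,K)$ is a strong Shoda pair. Finally, taking $\lambda$ a faithful linear character of $H/K$, the (SS2) argument also shows $I_G(\lambda)=H$ (a $g\in I_G(\lambda)\setminus H$ would lie in $N_G(K)$ and, as $[H,g]\le G'\le K$, would together with $H$ give an abelian subgroup of $N_G(K)/K$ strictly larger than $H/K$), and since $H\unlhd G$ makes $H\cap H^g=H$ for all $g$, Shoda's criterion gives $\operatorname{Ind}_H^G\lambda\in\Irr(G)$; combined with the first paragraph this would yield $e_\Q(\operatorname{Ind}_H^G\lambda)=e(G,H,K)$ and the matrix description, completing the proof.

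The step I expect to be the main obstacle is the inductive claim in (a): forcing the inducing subgroup to contain $A$ \emph{and} to have a cyclic quotient $H/K$ is precisely where Clifford theory resists, since the natural candidate — the inertia group of a linear constituent of $\chi|_A$ — need not carry a linear character inducing $\chi$ when the relevant quotient is non-cyclic. The prime-index induction sidesteps this, but the case where $\chi|_M$ remains irreducible requires the degree and multiplicity bookkeeping indicated above, and the condition $A\le H$ must be carried through every step of the induction.
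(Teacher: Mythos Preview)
The paper does not actually prove \Cref{SSPMetabelian}; it simply quotes the result and refers to \cite[Theorem~4.7]{OlivieridelRioSimon2004} and \cite[Theorem~3.5.12]{JespersdelRioGRG1}. Your proposal is therefore not being compared against a proof in the paper but against the original argument in those references, and your outline does follow that argument faithfully. Part~(b) is handled correctly: the verification of (SS1)--(SS3) is the standard one (for (SS3) you exploit that $H/K$ is cyclic so the minimal subgroup $D/K$ inside $\langle K,K^g\rangle/K$ supplies a factor $\widehat K-\widehat D$ annihilated by $\widehat{K^g}$), and the induction in part~(a) through a prime-index normal subgroup $M$, splitting according to whether $\chi|_M$ is irreducible, is exactly the mechanism used in \cite{OlivieridelRioSimon2004}.

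There is one genuine slip. In the last part of (b), when you show $I_G(\lambda)=H$, you write ``as $[H,g]\le G'\le K$''. The inclusion $[H,g]\le G'$ is of course true, but $G'\le K$ is \emph{not} assumed and is generally false (you only have $G'\le A\le H$ and $H'\le K$). The correct justification is immediate from $g\in I_G(\lambda)$: for every $h\in H$ one has $\lambda(ghg^{-1})=\lambda^g(h)=\lambda(h)$, so $ghg^{-1}h^{-1}\in\ker\lambda=K$, i.e.\ $[H,g]\le K$ directly. With this fix your argument goes through: $\langle H,g\rangle/K$ is abelian (generated by the abelian $H/K$ and the element $gK$ which centralises it), so $\langle H,g\rangle$ lies in the set of condition~(1), contradicting maximality of $H$. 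Everything else in your plan is sound; in particular the ``multiplicity bookkeeping'' in the case $\chi|_M\in\Irr(M)$ works because $\operatorname{Ind}_{H_1}^G\mu=\operatorname{Ind}_M^G\theta$ decomposes simultaneously as $\sum_j\chi_j$ (the $p$ extensions of $\theta$) and as $\sum_i\operatorname{Ind}_H^G\lambda_i$ (over the $p$ extensions of $\mu$ to $H=I_G(\mu)$), forcing each $\operatorname{Ind}_H^G\lambda_i$ to equal some $\chi_j$.
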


Suppose that $G$ is a finite metacyclic group and let $A$ be a cyclic normal subgroup of $G$ with $G/A$ cyclic. Then every Wedderburn component of $\Q G$ is of the form $\Q Ge(G,H,K)$ for $(H,K)$ be subgroups of $G$ satisfying the conditions of \Cref{SSPMetabelian}. Then, $N_G(K)/H$ is cyclic, say generated by $uH$ and of order $k$.
Moreover, $H/K$ is cyclic, say generated by $aK$, and normal in $N_G(K)/K$ so that $(aK)^{uK}=a^xK$ and $(uK)^k=a^y$ for some integers $x$ and $y$.
By \Cref{SSPAlg} we have
\begin{equation}\label{WCMetacyclic}
A(G,H,K)\cong (\Q(\zeta_m)/F,\sigma_x,\zeta_m^y) =
\Q(\zeta_m)[\overline{u} \mid \zeta_m \overline{u} = \overline{u} \zeta_m^x, \overline{u}^k=\zeta_m^y].
\end{equation}

\subsection{Tools for the Isomorphism Problem for group rings}

In this subsection we recall two results relevant for the Isomorphism Problem for rational group algebras. The first one
%
%
is a well known result of Artin which tell us what is the number of Wedderburn components of a rational group algebra. See \cite[Corollary~39.5]{CurtisReiner1962} or \cite[Corollary~7.1.12]{JespersdelRioGRG1}

\begin{theorem}[Artin]\label{Artin}
	If $G$ is a finite group then the number of Wedderburn components of $\Q G$ is the number of conjugacy classes of cyclic subgroups of $G$.
\end{theorem}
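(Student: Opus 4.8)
The plan is to prove the equality through a chain of three bijections, passing from Wedderburn components of $\Q G$ to Galois orbits of irreducible characters, then to Galois orbits of conjugacy classes, and finally to conjugacy classes of cyclic subgroups. Throughout, set $n=\exp(G)$ and $\Gamma=\Gal(\Q(\zeta_n)/\Q)$. This group acts on the set $\Irr(G)$ of complex irreducible characters by $\sigma\cdot\chi=\sigma\circ\chi$ and on the set $\Cl(G)$ of conjugacy classes of $G$ by $\sigma_k\cdot(g^G)=(g^k)^G$, and these two actions are linked by the elementary identity $\sigma_k(\chi(g))=\chi(g^k)$, valid because $\chi(g)$ is a sum of $n$-th roots of unity.

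First I would show that the number of Wedderburn components of $\Q G$ equals the number of $\Gamma$-orbits on $\Irr(G)$. Since $\Q G$ is semisimple, that number is the number of primitive central idempotents of $\Q G$, i.e. $\dim_\Q Z(\Q G)$. Now $\Gamma$ acts on $\C G$ by $\sigma(\sum_g a_g g)=\sum_g\sigma(a_g)g$ with fixed ring $\Q G$, so $Z(\Q G)=Z(\C G)^{\Gamma}$; and under $Z(\C G)=\bigoplus_{\chi\in\Irr(G)}\C e_\chi$, with $e_\chi=\frac{\chi(1)}{|G|}\sum_g\chi(g^{-1})g$ the primitive central idempotents of $\C G$, one checks $\sigma(e_\chi)=e_{\sigma\cdot\chi}$, so $\Gamma$ simply permutes the $e_\chi$ according to its action on $\Irr(G)$. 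Hence $Z(\C G)^{\Gamma}$ has a $\Q$-basis of $\Gamma$-orbit sums $\sum_{\chi\in O}e_\chi$, and its dimension is the number of $\Gamma$-orbits on $\Irr(G)$.

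The crucial step is \emph{Brauer's permutation lemma}: the number of $\Gamma$-orbits on $\Irr(G)$ equals the number of $\Gamma$-orbits on $\Cl(G)$. Here I would take $X=(\chi(g))_{\chi\in\Irr(G),\,g^G\in\Cl(G)}$, the invertible character table of $G$; the identity $\sigma_k(\chi(g))=\chi(g^k)$ translates into a matrix relation $Q_kX=XP_k$, where $Q_k$ and $P_k$ are the permutation matrices of the $\Gamma$-actions on the rows ($\Irr(G)$) and on the columns ($\Cl(G)$) of $X$. Thus the two permutation representations of $\Gamma$ are conjugate via $X$, hence isomorphic as complex representations, and in particular contain the trivial summand with the same multiplicity; since that multiplicity is the number of orbits, the claim follows. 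Equivalently, $\mathrm{tr}(Q_k)=\mathrm{tr}(P_k)$ for all $k$, and one invokes the orbit-counting lemma.

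Finally I would produce a bijection between $\Gamma$-orbits on $\Cl(G)$ and conjugacy classes of cyclic subgroups of $G$, sending the orbit of $g^G$ to the conjugacy class of $\GEN{g}$. It is well defined: if $g^G$ and $h^G$ lie in the same $\Gamma$-orbit then $h$ is conjugate to $g^k$ with $\gcd(k,n)=1$, hence $\gcd(k,|g|)=1$ and $\GEN{g^k}=\GEN{g}$, so $\GEN{h}$ is conjugate to $\GEN{g}$. It is surjective since every cyclic subgroup is generated by one of its elements. It is injective: if $\GEN{h}$ is conjugate to $\GEN{g}$, then some $G$-conjugate $g^x$ generates $\GEN{h}$, so $g^x=h^k$ for some $k$ coprime to $|h|$, and $k$ may be chosen coprime to $n$ (the reduction $\Z_n^*\to\Z_{|h|}^*$ is onto), placing $g^G$ in the $\Gamma$-orbit of $h^G$. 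Composing the three bijections gives the theorem. I expect the only genuine obstacle to be Brauer's permutation lemma; the first and last steps are routine bookkeeping with the Galois action.
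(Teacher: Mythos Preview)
The paper does not supply its own proof of this theorem; it is quoted as a known result of Artin with references to Curtis--Reiner and Jespers--del R\'io, so there is nothing in the paper to compare your argument against. Your three-step strategy (Wedderburn components $\leftrightarrow$ $\Gamma$-orbits on $\Irr(G)$ $\leftrightarrow$ $\Gamma$-orbits on $\Cl(G)$ $\leftrightarrow$ conjugacy classes of cyclic subgroups) is the standard route, and steps~2 and~3 are correctly sketched.

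Step~1, however, contains a real error. You write that the number of primitive central idempotents of $\Q G$ is $\dim_\Q Z(\Q G)$, and then that the orbit sums $\sum_{\chi\in O}e_\chi$ form a $\Q$-basis of $Z(\Q G)=Z(\C G)^\Gamma$. Both assertions are false: $Z(\Q G)$ has $\Q$-dimension equal to the number of conjugacy classes of $G$ (the class sums are a basis), i.e.\ $|\Irr(G)|$, not the number of $\Gamma$-orbits. For $G=C_3$ there are two $\Gamma$-orbits on $\Irr(G)$ but $\dim_\Q Z(\Q G)=3$; the two orbit sums are linearly independent but do not span. A $\Gamma$-fixed element $\sum_\chi a_\chi e_\chi$ satisfies $\sigma(a_\chi)=a_{\sigma\cdot\chi}$, which does not force the $a_\chi$ to be constant (and rational) on each orbit. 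What you actually need is the direct statement that the orbit sums $\sum_{\chi\in O}e_\chi$ are precisely the \emph{primitive} central idempotents of $\Q G$: each is $\Gamma$-fixed, hence lies in $Z(\Q G)$; they are pairwise orthogonal idempotents summing to $1$; and each is primitive because any idempotent of $Z(\Q G)\subseteq Z(\C G)$ is a partial sum of the $e_\chi$, and a $\Gamma$-invariant partial sum must be supported on a union of full orbits. With this fix in place (and working over $\Q(\zeta_n)$ rather than $\C$, since $\Gamma$ does not naturally act on all of $\C$), your argument goes through.
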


The second one is a consequence of the Perlis-Walker Theorem.

\begin{theorem}\label{Abelianizado}
	If $G$ and $H$ are finite groups with $\Q G\cong \Q H$ then $G/G'\cong H/H'$.
\end{theorem}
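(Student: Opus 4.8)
Theorem~\ref{Abelianizado}: if $G$ and $H$ are finite groups with $\Q G \cong \Q H$, then $G/G' \cong H/H'$.

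The plan is to isolate the part of the rational group algebra that "sees" the abelianization and show it is an invariant of the $\Q$-algebra isomorphism type. The key observation is that $G/G'$ is abelian, so by the discussion recalled in \Cref{SectionNotation} the natural surjection $\Q G \to \Q(G/G')$ identifies $\Q(G/G')$ with $\Q G\widehat{G'}$, which is the direct sum of exactly those Wedderburn components $\Q Ge$ of $\Q G$ for which $e\widehat{G'} = e$; equivalently, it is the sum of the commutative Wedderburn components of $\Q G$. The reason is that a Wedderburn component $A$ of $\Q G$ is commutative if and only if it is a field, if and only if the associated irreducible character is linear, if and only if $G'$ lies in its kernel, i.e. $e\widehat{G'}=e$. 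So $\Q(G/G')$ is canonically the sum of the commutative components of $\Q G$, and likewise $\Q(H/H')$ is the sum of the commutative components of $\Q H$.

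Now I would argue that any $\Q$-algebra isomorphism $\varphi\colon \Q G \to \Q H$ must carry the sum of commutative components onto the sum of commutative components. Indeed, $\varphi$ permutes primitive central idempotents, hence induces a bijection between the Wedderburn components of $\Q G$ and those of $\Q H$ which is an isomorphism of $\Q$-algebras on each component; an isomorphism of $\Q$-algebras preserves commutativity, so commutative components go to commutative components. Restricting $\varphi$ therefore gives a $\Q$-algebra isomorphism $\Q(G/G') \cong \Q(H/H')$. Since $G/G'$ and $H/H'$ are finite abelian groups, the Perlis--Walker theorem \cite{PerlisWalker1950} (already invoked in the introduction) yields $G/G' \cong H/H'$, which is the desired conclusion.

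The only mild subtlety is the claim that $\varphi$ restricts to an isomorphism between the sums of commutative components --- one needs that $\varphi$ respects the Wedderburn decomposition, which is automatic because the decomposition is intrinsic to the algebra: the primitive central idempotents are determined by the ring structure, and $\varphi$ being an isomorphism sends primitive central idempotents to primitive central idempotents, hence maps each $\Q Ge$ isomorphically onto $\Q H\varphi(e)$. There is no real obstacle here; the whole argument is a reduction to Perlis--Walker via the identification of the abelianization's group algebra with the commutative part of the Wedderburn decomposition.
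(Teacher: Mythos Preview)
Your proof is correct and takes essentially the same approach as the paper: both identify $\Q(G/G')$ as an intrinsic piece of $\Q G$ preserved by any $\Q$-algebra isomorphism, and then invoke Perlis--Walker. The only cosmetic difference is that the paper characterizes the complementary ideal $A(G)=\Q G(1-\widehat{G'})$ as the smallest ideal with commutative quotient, whereas you characterize $\Q G\widehat{G'}$ directly as the sum of the commutative Wedderburn components; these are two descriptions of the same decomposition.
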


\begin{proof}
	Let $A(G)$ denote the kernel of the natural homomorphism $\Q G\rightarrow \Q(G/G')$.
	Then $A(G)$ is a the smallest ideal $I$ of $\Q G$ such that $(\Q G)/I$ is commutative. In particular, if $f:\Q G\rightarrow \Q H$ is an isomorphism then $f(A(G))=A(H)$ and therefore $f$ induces an isomorphism $\Q(G/G')\cong \Q(H/H')$. Then $G/G'\cong H/H'$ by the Perlis-Walker Theorem \cite{PerlisWalker1950}.
\end{proof}

\section{The Isomorphism Problem for finite metacyclic $p$-groups}\label{SectionpGroups}

In this section $p$ is a prime and we prove that the Isomorphism Problem for rational group algebras has positive solution for finite metacyclic $p$-groups.

All throughout $G$ is a finite metacyclic $p$-group. By \Cref{ClasiMeta}, $G\cong \mathcal{P}_{p,\mu,\nu,\sigma,\rho,\epsilon}$ for unique non-negative integers $\mu,\nu,\sigma$ and $\rho$ and unique $\epsilon\in \{1,-1\}$ satisfying conditions \ref{rhomu}-\ref{epsilon-1}.

The proof of the main result of this section relies in five technical lemmas.

\begin{lemma}\label{CyclicConjugate}
Suppose that $\epsilon=1$ and $\mu> 0$. Let $0\le d<\nu$ and for every $1\le i\le p^{\mu}$
	set
	$$l_i=\begin{cases}
	2^{\sigma}+i(2^{\nu-d}+2^{\mu-1}), & \text{if } p=2\nmid i \text{ and } \mu=\nu+\rho; \\
	p^{\sigma}+ip^{\nu-d}, & \text{otherwise},
	\end{cases}$$
	$$k_i=\min(\mu,v_p(l_i)) \qand
	h_i=\min(k_i,\rho+d,\rho+v_p(i)).$$
	Then $\GEN{b^{p^d}a^i}$ and $\GEN{b^{p^d}a^j}$ are conjugate in $G$ if and only if $i\equiv j \mod p^{h_i}$.
	In that case, $k_i=k_j$ and $h_i=h_j$.
\end{lemma}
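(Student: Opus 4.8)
The plan is to compute the conjugacy action of $G$ on the subgroups $\GEN{b^{p^d}a^i}$ explicitly using the group relations of $\mathcal{P}_{p,\mu,\nu,\sigma,\rho,1}$. Set $g = b^{p^d}a^i$. Two cyclic subgroups $\GEN{g}$ and $\GEN{g'}$ (with $g' = b^{p^d}a^j$) are conjugate iff some conjugate of $g$ generates $\GEN{g'}$, i.e. iff $(g')^t$ is conjugate to $g$ for some $t$ coprime to $|g|$. Since $b^a = a^{\epsilon + p^\rho} = a^{1+p^\rho}$, conjugation by $a$ fixes $b^{p^d}$ modulo powers of $a$: using \eqref{Potencia} I would first work out $(b^{p^d})^{a^s}$ and more generally the conjugate $(b^{p^d}a^i)^{a^s}$, obtaining something of the form $b^{p^d} a^{i + (\text{correction depending on } s, d, \rho, \mu)}$. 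The key arithmetic input is \Cref{PropEse}, especially parts \eqref{vpRm-1} and \eqref{SumaPotencias}, to evaluate $\Ese{R}{p^d}$ and related sums modulo $p^\mu$; this is where the quantity $l_i$ (the "shift" in the exponent of $a$ produced by powering $b^{p^d}a^i$ up to its relevant power, using $b^{p^\nu} = a^{p^\sigma}$) comes from, and where the special case $p=2$, $\mu = \nu + \rho$ forces the extra $2^{\mu-1}$ term via the $p=2$ branch of \eqref{SumaPotencias}.

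Next I would break the conjugacy condition into two contributions: conjugation by powers of $a$, which changes $i$ by a multiple of $p^{\min(\mu,\rho+d)}$ (roughly, since $(b^{p^d})^a = b^{p^d}a^{p^\sigma \Ese{\cdots}{}}$-type corrections have $p$-valuation governed by $\rho + d$), and the effect of replacing $g$ by a power $g^t$, which rescales $i$ by $t$ and introduces the term $v_p(i)$ into $h_i$ (powering by $t$ coprime to $p$ does not change $v_p(i)$, but interacts with the $\nu - d$ shift $l_i$). Combining these, $\GEN{b^{p^d}a^i}$ and $\GEN{b^{p^d}a^j}$ turn out to be conjugate precisely when $i \equiv j$ modulo $p$ to the power $h_i = \min(k_i, \rho+d, \rho + v_p(i))$, where $k_i = \min(\mu, v_p(l_i))$ records how far $b^{p^d}a^i$ is from having order forcing a larger orbit. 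Finally, to see $k_i = k_j$ and $h_i = h_j$ when the subgroups are conjugate, I would observe that $i \equiv j \bmod p^{h_i}$ with $h_i \le \rho + v_p(i)$ forces $v_p(i) = v_p(j)$ (if $v_p(i) < h_i$ trivially; if $v_p(i) \ge h_i$ one checks $h_i \le \rho + d$ gives enough room), and similarly $v_p(l_i) = v_p(l_j)$ because $l_i - l_j = (i-j)p^{\nu-d}$ (or the analogous $2$-adic expression) has valuation at least $h_i + \nu - d \ge k_i \ge v_p(l_i)$ unless $k_i = \mu$, in which case symmetry gives $k_j = \mu$ too.

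The main obstacle will be the careful bookkeeping in the $p = 2$, $\mu = \nu + \rho$ case: there the standard formula $\Ese{R}{n} \equiv n \bmod p^m$ fails and is replaced by $n + k 2^{m-1}$, so the exponent shift acquires the extra $2^{\mu-1}$ summand seen in the definition of $l_i$, and one must track parity of $i$ (hence the hypothesis $2 \nmid i$ in that branch) throughout. A secondary subtlety is making sure the "power $g \mapsto g^t$" part of the conjugacy relation is handled correctly — one must verify that allowing $t$ coprime to $|g|$ genuinely produces the $v_p(i)$ term in $h_i$ and does not enlarge the congruence class further; I would do this by checking that $\GEN{g^t} = \GEN{g}$ already as subgroups when $t$ is a unit, so the only freedom is conjugation, and the $v_p(i)$ enters solely through how $a$-conjugation acts on $a^i$ versus $a^{ti'}$. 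Everything else is a routine but lengthy application of \Cref{PropEse} and the identity \eqref{Potencia}.
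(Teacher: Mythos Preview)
Your overall strategy matches the paper's: compute $(b^{p^d}a^i)^{p^{\nu-d}} = a^{l_i}$ via \eqref{Potencia} and \Cref{PropEse}.\eqref{SumaPotencias} (this is exactly where $l_i$ and the anomalous $p=2,\ \mu=\nu+\rho$ branch arise), then analyze conjugation and powering. But you have misidentified where the three terms in $h_i$ come from, and in particular you never mention conjugation by $b$, which is essential.

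In the paper's computation, writing $R = 1 + p^\rho$ and conjugating $b^{p^d}a^i$ by a general element $b^y a^x$ after raising to a power $u \equiv 1 \pmod{p^{\nu-d}}$ (forced because the images in $G/\GEN{a}$ must agree), one gets
\[
j \equiv i + iYp^{\rho} + v\,l_i R^y + x(1-R^{p^d}) \pmod{p^\mu},
\]
where $R^y = 1 + Yp^\rho$ and $u = 1 + vp^{\nu-d}$. The three congruence obstructions are: $v_p(1-R^{p^d}) = \rho + d$ from conjugation by $a^x$; $v_p(iYp^\rho) \ge \rho + v_p(i)$ from conjugation by $b^y$; and $v_p(l_i)$ from the power $u$. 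You attribute $\rho + v_p(i)$ to the powering step (``rescales $i$ by $t$'') or to ``how $a$-conjugation acts on $a^i$'', but $a$-conjugation shifts $i$ by $x(1-R^{p^d})$, which does not depend on $i$ at all; the $\rho + v_p(i)$ term comes solely from $a^i \mapsto a^{iR^y}$ under $b^y$-conjugation. Without this, your converse direction would fail: when $h_i = \rho + v_p(i)$ you need to exhibit a conjugating element, and the paper does so by choosing $y$ with $R^y \equiv 1 + zp^\rho$ via \Cref{PropEse}.\eqref{Potencias}, an ingredient missing from your plan.

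A smaller point: for the final claim $k_i = k_j$, your valuation argument on $l_i - l_j$ is more delicate than necessary (and the inequality $h_i + \nu - d \ge v_p(l_i)$ is not obviously true). The paper simply observes that $|b^{p^d}a^i| = p^{\nu-d+\mu-k_i}$ from $(b^{p^d}a^i)^{p^{\nu-d}} = a^{l_i}$, so conjugate elements having equal order gives $k_i = k_j$ immediately.
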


\begin{proof}
	As $\mu>0$, by condition \ref{rhomu}, we also have $\rho>0$.
	Let $R=1+p^{\rho}$.
	By \Cref{PropEse}.\eqref{vpRm-1} we have $v_p(R^{p^d}-1)=d+\rho$ and by condition \ref{epsilon1} we have $\mu-(d+\rho)\le \nu-d$.
	Hence, applying \Cref{PropEse}.\eqref{SumaPotencias} with $a=d+\rho$ and $m=\nu+\rho>a$ we obtain the following for every $k\in \N$:
 $$\Ese{R^{p^d}}{kp^{\nu-d}} = \begin{cases}
 k2^{\nu+d}+k2^{\nu+\rho-1} \mod 2^{\nu+\rho}, & \text{if } p=2; \\
 kp^{\nu+\rho}, & \text{if} p\ne 2
 \end{cases}$$
 Then
	\begin{equation}\label{EseRpd}
	\Ese{R^{p^d}}{kp^{\nu-d}} \equiv \begin{cases}
	k2^{\nu-d}+k2^{\mu-1} \mod 2^{\mu}, & \text{if } p=2, \text{ and } \mu=\nu+\rho; \\
	kp^{\nu-d} \mod p^{\mu}, & \text{otherwise}.
	\end{cases}
	\end{equation}
	Moreover $a^{b^{p^d}}=a^{R^{p^d}}$ and hence, by \eqref{Potencia} we have
	\begin{equation}\label{bpdai^u}
	(b^{p^d}a^i)^{p^{\nu-d}} = b^{p^{\nu}}a^{i\Ese{R^{p^d}}{p^{\nu-d}}}=
	a^{p^{\sigma}+i\Ese{R^{p^d}}{p^{\nu-d}}}=a^{l_i}.
	\end{equation}

	Suppose that $\GEN{b^{p^d}a^i}$ and $\GEN{b^{p^d}a^j}$ are conjugate in $G$. Then there are integers $x,y,u$ with $p\nmid u$ such that $b^{p^d}a^j = ((b^{p^d}a^i)^u)^{b^ya^x}$. In particular $b^{p^d}\GEN{a}=b^{up^d}\GEN{a}$ and therefore $u\equiv 1 \mod p^{\nu-d}$.
	Write $u=1+vp^{\nu-d}$.
	Then
	$$(b^{p^d}a^i)^u = b^{p^d}a^i (b^{p^d}a^i)^{vp^{\nu-d}} = b^{p^d}a^{i+vl_i}$$
	Hence
	$$b^{p^d}a^j = (b^{p^d}a^{i+vl_i})^{b^ya^x} = b^{p^d} a^{(i+vl_i)R^y + x (1-R^{p^d})}.$$
	On the other hand, $R^y = 1 + Y p^{\rho}$ for some integer $Y$.
	Then
	$$j\equiv i + iYp^{\rho} + vl_iR^y +  x (1- R^{p^d}) \equiv i \mod p^{h_i}$$
	because $h_i=\min(k_i,\rho+d,r+v_p(i))=\min(\mu,v_p(l_i),v_p(1-R^{p^d}),\rho+v_p(i))$.

	Conversely suppose that $j\equiv i \mod p^{h_i}$ and consider the four possibilities for $h_i$ separately.
	Of course if $h_i=\mu$ then $b^{p^d}a^i=b^{p^d}a^j$.
	Suppose that $h_i=\rho+d$. Then $h_i=v_p(1-R^{p^d})$, by \Cref{PropEse}.\eqref{vpRm-1}.
	Therefore there is an integer $x$ such that $j\equiv i+x(1-R^{p^d}) \mod p^{\mu}$ and hence $(b^{p^d}a^i)^{a^x} = b^{p^d}a^{i+x(1-R^{p^d})}=b^{p^d}a^j$.
	Assume that $h_i=k_i=v_p(l_i)$.
	Then $j\equiv i+vl_i \mod p^{\mu}$ for some $v\in \N$. Hence using \eqref{bpdai^u} we have $(b^{p^d}a^i)^{1+vp^{\nu-d}}=b^{p^d}a^{i+vl_i}=b^{p^d}a^j$.
	Finally, suppose that $h_i=\rho+v_p(i)$.
	Then there is an integer $z$ such that $j\equiv i+zip^{\rho} \mod p^{\mu}$.
	Moreover, by \Cref{PropEse}.\eqref{Potencias}, there is a non-negative integer $y$ such that $R^y\equiv 1+zp^{\rho}$.
	Then $(b^{p^d}a^i)^{b^y}=b^{p^d}a^{iR^y}=b^{p^d} a^{i(1+zp^{\rho})}=b^{p^d}a^j$.

	For the last part, suppose that $\GEN{b^{p^d}a^i}$ and $\GEN{b^{p^d}a^j}$ are conjugate in $G$. Then, from \eqref{bpdai^u} we have $p^{\nu-d+\mu-k_i}=|b^{p^d}a^i|=|b^{p^d}a^j|=p^{\nu-d+\mu-k_j}$, so that $k_i=k_j$.
	Suppose that $h_i\ne h_j$. Then necessarily $v_p(i)\ne v_p(j)$ and, as $j\equiv i \mod p^{h_i}$, we have  $h_i\le v_p(i)<\rho+v_p(i)$. Interchanging the roles of $i$ and $j$ we also obtain $h_j\le v_p(j)<\rho+v_p(j)$. So that $h_i=\min(k_i,\rho+d)=\min(k_j,\rho+d)=h_j$, a contradiction.
\end{proof}

\begin{lemma}\label{CCCOdd}
	If $\epsilon=1$ then the number of conjugacy classes of cyclic subgroups of $G$ is $N=A_{\sigma}+A$ where
	\begin{eqnarray*}
		A_{\sigma}&=&p^{\rho-1}\sigma\left(1+(p-1)\frac{1+2\nu-\sigma}{2}\right) -\frac{p^{\rho+\sigma-\mu}}{p-1} \text{ and}\\
		A&=&\frac{3p^{\rho-1}-2}{p-1} +p^{\rho-1}\frac{6-\rho+2\nu\rho-\rho^2 +p(\rho^2 + 2\nu-3\rho-2\nu\rho+2)}{2}
	\end{eqnarray*}
\end{lemma}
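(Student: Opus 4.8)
I would count the conjugacy classes of cyclic subgroups of $G$ directly, splitting them according to their image in $G/\GEN{a}$. Since $\GEN{a}$ is cyclic of order $p^\mu$ and each of its subgroups is characteristic in it, hence normal in $G$, the cyclic subgroups contained in $\GEN{a}$ are $\mu+1$ normal subgroups, giving $\mu+1$ conjugacy classes. Now let $C\le G$ be cyclic with $C\not\subseteq\GEN{a}$; then $C\GEN{a}/\GEN{a}$ is a nontrivial subgroup of the cyclic group $G/\GEN{a}$ of order $p^\nu$, hence its unique subgroup of order $p^{\nu-d}$ for a unique $d$ with $0\le d<\nu$, and writing a generator $c$ of $C$ as $c=b^{up^d}a^i$ with $p\nmid u$ and replacing $c$ by $c^t$ for $t$ with $ut\equiv1$ modulo the order of $b$ (so $p\nmid t$, hence $\GEN{c^t}=C$), one sees $C=\GEN{b^{p^d}a^i}$ for some $i$. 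Thus every cyclic subgroup lies in $\GEN{a}$ or in exactly one of the families $\mathcal{F}_d=\{\GEN{b^{p^d}a^i}:0\le i<p^\mu\}$ ($0\le d<\nu$, pairwise disjoint since the image in $G/\GEN{a}$ has order $p^{\nu-d}$), and since $\GEN{a}\unlhd G$ conjugation preserves this classification. By \Cref{CyclicConjugate}, within $\mathcal{F}_d$ the subgroups $\GEN{b^{p^d}a^i}$ and $\GEN{b^{p^d}a^j}$ are conjugate precisely when $i\equiv j \mod p^{h_i}$, in which case $h_i=h_j$; hence each conjugacy class inside $\mathcal{F}_d$ is represented by exactly $p^{\mu-h_i}$ of the indices $i$, so $\mathcal{F}_d$ contributes $p^{-\mu}\sum_{i=0}^{p^\mu-1}p^{h_i}$ conjugacy classes. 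Therefore
\begin{equation*}
N=(\mu+1)+\sum_{d=0}^{\nu-1}\frac{1}{p^\mu}\sum_{i=0}^{p^\mu-1}p^{h_i}.
\end{equation*}

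\textbf{Computing $h_i$.} Set $t=v_p(i)$, with $t=\infty$ when $i=0$, in which case $l_0=p^\sigma$ and $h_0=\min(\sigma,\rho+d)$. For $i\ne0$ one evaluates $v_p(l_i)$ from $l_i=p^\sigma+ip^{\nu-d}$: it is $\nu-d+t$ if $\nu-d+t<\sigma$, it is $\sigma$ if $\nu-d+t>\sigma$, and in the \emph{resonant} case $\nu-d+t=\sigma$ it equals $\sigma+v_p(1+ip^{-t})$, which can be any integer $\ge\sigma$. Then $k_i=\min(\mu,v_p(l_i))$ and $h_i=\min(k_i,\rho+d,\rho+t)$. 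Outside the resonant case this yields, using $\sigma\le\mu$ (condition \ref{epsilon1}) and $\min(\rho+d,\rho+t)=\rho+\min(d,t)$,
\begin{equation*}
h_i=\min\bigl(\sigma,\ \nu-d+t,\ \rho+\min(d,t)\bigr),
\end{equation*}
a function of $(t,d)$ alone, whereas in the resonant case it additionally involves $v_p(l_i)-\sigma$. When $p=2$ and $\mu=\nu+\rho$, the alternative expression for $l_i$ in \Cref{CyclicConjugate} changes $v_2(l_i)$ only if moreover $\rho=1$, $d=0$ and $\sigma=\nu$, and then $h_i=\rho+d=1$ regardless; so $h_i$ always equals the value obtained from the generic $l_i$, which is why the final formula is uniform in $p$.

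\textbf{Summation.} For each $d$ I would group the $i\in\{0,\dots,p^\mu-1\}$ by $t=v_p(i)$ (there are $\varphi(p^{\mu-t})$ of them with a given $t<\mu$), and within the resonant residue class further by $v_p(l_i)$, turning $p^{-\mu}\sum_i p^{h_i}$ into a sum of geometric progressions. The regimes are delimited by the thresholds $d=\sigma-\rho$, $d=\nu-\rho$, $t=\sigma-\rho$ and $t=d$, arising from comparing $\rho+d$ with $\sigma$ and $\nu-d+t$ with $\rho+\min(d,t)$; all are non-negative by conditions \ref{rhomu} and \ref{epsilon1}. For instance, for $0\le d\le\sigma-\rho$ with no resonance one obtains the clean value $p^{\rho-1}\bigl(p+d(p-1)\bigr)$. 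Summing the resulting per-$d$ expressions over $0\le d<\nu$, one separates the part in which the cap $\sigma$ is active in the minimum defining $h_i$ (this assembles into $A_\sigma$, producing in particular the term $-p^{\rho+\sigma-\mu}/(p-1)$, well defined since $\mu\le\rho+\sigma$) from the part in which it is not (this assembles into $A$), using only the elementary identities for $\sum x^d$ and $\sum d\,x^d$, such as \eqref{Sumad2d}. This gives $N=A_\sigma+A$.

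\textbf{Main obstacle.} The hard part is the bookkeeping in the last two steps, and especially the resonant case $\nu-d+v_p(i)=\sigma$: there $v_p(l_i)$ is not determined by $v_p(i)$ and may be the term realizing the minimum that defines $h_i$, so those $i$ must be partitioned one level finer (by $v_p(l_i)$) and summed with care. The individual computations are routine; the genuine difficulty is keeping all the regimes straight — small $d$ versus large $d$, $t\le d$ versus $t>d$, resonant versus not, and $v_p(l_i)$ below or above $\mu$ — and verifying that the contributions reassemble exactly into the stated closed form.
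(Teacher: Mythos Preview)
Your approach is essentially the same as the paper's: you reduce via \Cref{CyclicConjugate} to counting indices $i$ weighted by $p^{h_i}$, stratify by $d$ and by $t=v_p(i)$, single out the resonant stratum $\nu-d+t=\sigma$ for a finer partition by $v_p(l_i)$, and sum the resulting geometric pieces---this is exactly the paper's Claims~1--2 and Cases~1--3, just repackaged via the identity $N_d=p^{-\mu}\sum_i p^{h_i}$ instead of $N_d=1+\sum_h N_{d,h}$. One small imprecision: your assertion that the alternative $l_i$ for $p=2$ changes $v_2(l_i)$ only when $\rho=1$, $d=0$, $\sigma=\nu$ is not quite right (it can change in the resonant case whenever $\sigma+v_2(1+i)\ge\mu-1$), but since $h_i=\min(k_i,\rho+d,\rho)$ and $\rho\le\mu-1$ there, $h_i$ is indeed unaffected, so your conclusion stands.
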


\begin{proof}
For every $0\le d\le \nu$  we let $\CC_d$ denote the set of cyclic subgroups $C$ of $G$ satisfying $[C\GEN{a}:\GEN{a}]=p^{\nu-d}$.
Clearly $\CC_d$ is closed by conjugation in $G$.
We let $N_d$ denote the number of conjugacy classes of cyclic subgroups of $G$ belonging to $\CC_d$.
Then the number of conjugacy classes of subgroups of $G$ is $\sum_{d=0}^{\nu} N_d$.
For every $1\le i \le p^\mu$ we will use the notation $l_i$, $k_i$ and $h_i$ introduced in \Cref{CyclicConjugate}.

As $G/\GEN{a}$ is cyclic of order $p^{\nu}$, every element of $\CC_d$ is formed by the groups of the form $\GEN{b^{p^d}a^i}$ with $1\le i \le p^{\mu}$. In particular $N_{\nu}=\mu+1$, the number of subgroups of $\GEN{a}$.

	From now on we assume that $0\le d<\nu$.

	\textbf{Claim 1}. If $v_p(i)\ge \min(\sigma,\rho+d)$ then $\GEN{b^{p^d}a^i}$
	is conjugate to $\GEN{b^{p^d}}$ in $G$.

	Indeed, suppose that $v_p(i)\ge \min(\sigma,\rho+d)$.
	By \Cref{CyclicConjugate} we have to prove that $i\equiv 0 \mod p^{h_i}$, i.e. $h_i\le v_p(i)$.
	First of all observe that $v_p(i)\ge \min(\sigma,\rho+d)\ge 1$, because $1\le \rho\le \sigma$. Hence $l_i=p^\sigma+ip^{\nu-d}$. If $v_p(ip^{\nu-d})>\sigma$ then $v_p(l_i)=s$ and hence $h_i=\min(\sigma,\rho+d,\rho+v_p(i))=\min(\sigma,\rho+d)\le v_p(i)$, as desired.
	Suppose otherwise that $v_p(ip^{\nu-d})\le \sigma$.
	Then $v_p(i)<v_p(ip^{\nu-d})\le \sigma$ and hence, by hypothesis $\rho+d\le v_p(i)$. Then, by condition \ref{epsilon1} of \Cref{ClasiMeta} we have $v_p(ip^{\nu-d})\ge \rho+\nu\ge \mu \ge \sigma\ge v_p(ip^{\nu-d})$. Therefore  $v_p(ip^{\nu-d})=\rho+\nu=\mu=\sigma$ and $v_p(i)=\rho+d<\sigma$. Then $h_i=\min(\sigma,\rho+d)=\rho+d\le v_p(i)$, again as desired.

	\textbf{Claim 2}.
	If $1\le i,j \le p^{\mu}$, $v_p(i)< \min(\sigma,\rho+d)$ and $\GEN{b^{p^d}a^i}$ and $\GEN{b^{p^d}a^j}$ are conjugate in $G$ then $v_p(i)=v_p(j)$.

	Indeed, by \Cref{CyclicConjugate} we have $h_i=h_j$, which we denote $h$, and $i\equiv j \mod p^h$.
	By means of contradiction suppose that $v_p(i)\ne v_p(j)$.
	Then $h\le \min(v_p(i),v_p(j)) \le v_p(i) <\min(\sigma,\rho+d)$ and therefore $\min(\mu,v_p(l_j),\rho+d)  = \min(\mu,v_p(l_i),\rho+d) = h <\min(\sigma,\rho+d)$.
	Thus $v_p(l_i)=v_p(l_j)=h\le \min(v_p(i),v_p(j))$. However $v_p(ip^{\nu-d})>v_p(i)$ and if $p=2\ne i$ then $v_2(i(2^{\nu-d}+2^{\mu-1}))>v_2(i)$.
	Therefore $v_p(l_i-p^{\sigma})>v_p(i)\ge h = v_p(l_i)$. Then $h=v_p(l_j)=v_p(l_i)=\sigma\ge \min(\sigma,\rho+d)$, a contradiction.

	\medskip
	We use Claims 1 and 2 and \Cref{CyclicConjugate} as follows: For every $0\le h < \min(\sigma,\rho+d)$ let
	$$X_h = \{ i \in \Z : 1\le i \le p^{\mu} \text{ and } v_p(i)=h\}$$
	and consider the equivalence relation in $X_h$ given by
	$$i\sim_d j \text{ if and only if } k_i=k_j (=k) \text{ and } i\equiv j \mod p^{\min(k,\rho+d,\rho+h)}.$$
	Let $N_{d,h}$ be the number of $\sim_d$-equivalence classes in $X_h$.
	By \Cref{CyclicConjugate} and Claim 2, if $i\in X_h$, $1\le j\le p^\mu$, $v_p(i)<\min(\sigma,\rho+d)$ and $\GEN{b^{p^d}a_i}$ and $\GEN{b^{p^d}j}$ are conjugate in $G$ then $j\in X_h$ and $i$ and $j$ belong to the same $\sim_d$-class. Therefore, using also Claim 1 we have
	\begin{equation}\label{Nd=SumaNdh}
	N_d=1+\sum_{h=0}^{\min(\sigma,\rho+d)-1} N_{d,h}.
	\end{equation}

	Our next goal is obtaining a formula for $N_{d,h}$ and for that we consider three cases:

	\noindent\textbf{Case 1}: Suppose that $d\le \nu-\rho$.

	Let $h\in X_h$. We claim that $k_i=\min(\sigma,\rho+d,\rho+h)$.
	This is clear if $v_p(l_i)=\sigma$.
	Suppose that $v_p(l_i)>\sigma$.
	Then $v_p(l_i-p^{\sigma})=\sigma$.
	If $h=0$ then, as $\rho\le \sigma$ we have $k_i=\rho = \min(\sigma,\rho+d,\rho+h)$ as desired. Otherwise $l_i-p^\sigma=ip^{\nu-d}$, so that $h+\nu-d=\sigma$ and, by assumption we have $\rho+h=\rho+d-\nu+\sigma\le \sigma$.
	Then again $k_i=\min(\sigma,\rho+d,\rho+h)$.
	Finally, suppose that $v_p(l_i)<\sigma$. Then $v_p(l_i-p^\sigma)=v_p(l_i)$.
	If $l_i-p^{\sigma}=ip^{\nu-d}$ then $h+\rho\le h+\nu-d=v_p(l_i)<\sigma\le \mu$ and hence $k_i=\min(\rho+d,\rho+h) = \min(\sigma,\rho+d,\rho+h)$.
	Otherwise $p=2$, $h=0$, $\mu=\nu+\rho$ and $l_i-2^{\sigma}=i(2^{\nu-d}+2^{\mu-1})$.
	Then $\mu\ge \sigma>v_2(l_i)=v_2(2^{\nu-d}+2^{\mu-1})\ge \nu-d\ge \rho=\rho+h$, because $\nu-d=\mu-\rho-d\le \mu-\rho\le \mu-1$. Then $ k_i=\rho=\min(\sigma,\rho+d,\rho+h)$.
	So all the cases $k_i =\min(\sigma,\rho+d,\rho+h)$, as desired.

	Combining \Cref{CyclicConjugate} with the claim in the previous paragraph we deduce that for $d\le \nu-\rho$ and $h<\min(\sigma,\rho+d)$, the $\sim_d$-equivalence classes of $X_h$ have $p^{\mu-\min(\sigma,\rho+d,\rho+h)}$ elements.
	Thus for each $d\le \nu-\rho$ and $0\le h < \min(\sigma,\rho+d)$ we have
	$$N_{d,h}=\frac{\varphi(p^{\mu-h})}{p^{\mu-\min(\sigma,\rho+d,\rho+h)}}=
	(p-1)p^{\min(\sigma,\rho+d,\rho+h)-h-1}$$
	As, by Claim 1, for a fixed $d\mid \mu$, all the cyclic groups $\GEN{b^{p^d}a^i}$ with $v_p(i)\ge \min(\sigma,\rho+d)$ are conjugate we have
	\begin{align}\label{dMenorOIgualn-r}
	\begin{split}
	& \sum_{d=0}^{\nu-\rho} N_d =
	\sum_{d=0}^{\nu-\rho} \left(1+\sum_{h=0}^{\min(\sigma,\rho+d)-1} N_{d,h}\right)
	=
	\sum_{d=0}^{\sigma-\rho} \left(1+\sum_{h=0}^{d-1} (p-1)p^{\rho-1} + \sum_{h=d}^{d+\rho-1} (p-1)p^{\rho+d-h-1}\right) \\
	 &+\sum_{d=\sigma-\rho+1}^{\nu-\rho} \left(1+\sum_{h=0}^{\sigma-\rho-1} (p-1)p^{\rho-1}+\sum_{h=\sigma-\rho}^{\sigma-1} (p-1)p^{\sigma-h-1}\right) = (\nu-\rho+1)+\\
	& \sum_{d=0}^{\sigma-\rho} \left( d(p-1)p^{\rho-1} + (p-1)\sum_{x=0}^{\rho-1} p^x \right)
	+ \sum_{d=\sigma-\rho+1}^{\nu-\rho}\left( (\sigma-\rho)(p-1)p^{\rho-1} + (p-1)\sum_{x=0}^{\rho-1} p^x \right) \\
	&= (\nu-\rho+1) + \frac{(\sigma-\rho)(\sigma-\rho+1)}{2}(p-1)p^{\rho-1} +
	 (\nu-\sigma)(\sigma-\rho)(p-1)p^{\rho-1} + (\nu-\rho+1)(p^{\rho}-1) \\
	&= p^{\rho-1} \left( (\sigma-\rho)(p-1) \frac{1+2\nu-\rho-\sigma}{2} + (\nu-\rho+1)p \right)
	\end{split}
	\end{align}

	\noindent\textbf{Case 2}: Suppose that $\nu-\rho<d\le \nu-1$ and $h\ne \sigma+d-\nu$.

	Let $i\in X_h$. Then $v_p(p^{\sigma}+ip^{\nu-d})=\min(\sigma,h+\nu-d)$.
	If $v_p(l_i)\ne \min(\sigma,h+\nu-d)$ then $p=2\ne i$, $\mu=\nu+\rho$ and $l_i=2^{\sigma}+i(2^{\nu-d}+2^{\mu-1})$.
	Then, from $\rho\ge 1$ and $d>\nu-\rho\ge 0$ we deduce $\mu-1 = v_2(l_i-(2^{\sigma}+i2^{\nu-d})) = \min(v_2(l_i),v_2(2^{\sigma}+i2^{\nu-d})) = \min(v_2(l_i),\sigma,\nu-d)\le \nu-d=\mu-\rho-d< \mu-1$, a contradiction.
	This proves that $v_p(l_i)=\min(\sigma,h+\nu-d)$.
	Therefore $h_i=\min(\mu,v_p(l_i),\rho+d,\rho+h)=\min(\sigma,h+\nu-d,\rho+d,\rho+h) = \min(\sigma,h+\nu-d,\rho+h)= \min(\sigma,h+\nu-d)\le \mu$ because $\sigma\le \nu < \rho+d$ and $h+\nu-d< h+\rho$, by condition \ref{epsilon1} in \Cref{ClasiMeta} and the assumption.
	Hence, by \Cref{CyclicConjugate}, each class inside $X_h$ with $\sigma>h\ne \sigma+d-\nu$ contains
	$p^{\mu-\min(\sigma,h+\nu-d)}$ elements.
	This proves the following
	$$\text{if } \sigma> h\ne \sigma+d-\nu \text{ then } N_{d,h}=\frac{\varphi(p^{\mu-h})}{p^{\mu-\min(\sigma,h+\nu-d)}}=
	(p-1)p^{\min(\sigma-h,\nu-d)-1}.$$
Then
	\begin{align}\label{SumaNoIgual}
	\begin{split}
	& \sum_{d=\nu-\rho+1}^{\nu-1} \left( 1 + \sum_{h=0,h\ne \sigma+d-\nu}^{\sigma -1} N_{d,h}\right)\\
	& = \sum_{d=\nu-\rho+1}^{\nu-1} \left(1+(p-1)\left(\sum_{h=0}^{\sigma+d-\nu-1} p^{\nu-d-1} + \sum_{h=\sigma+d-\nu+1}^{\sigma-1} p^{\sigma-h-1}\right)\right) \\
	& = \sum_{x=1}^{\rho-1} \left(1+(p-1)\sum_{h=0}^{\sigma-x-1} p^{x-1} + (p-1)\sum_{h=\sigma-x+1}^{\sigma-1} p^{\sigma-h-1}\right) \\
	& = \sum_{x=1}^{\rho-1} \left(1+(\sigma-x)(p-1)p^{x-1} + (p-1)\sum_{y=0}^{x-2} p^y\right) = \sum_{x=1}^{\rho-1} \left( (\sigma-x)p^x-(\sigma-x)p^{x-1} + p^{x-1}\right) \\
	& = \sum_{x=1}^{\rho-1} (\sigma-x)p^x - \sum_{x=0}^{\rho-2} (\sigma-x-1)p^x + \sum_{x=0}^{\rho-2} p^x
	= (\sigma-\rho+1)p^{\rho-1} + 2\sum_{x=1}^{\rho-2} p^x - (\sigma-1) +1\\
	& = (1-\rho)p^{\rho-1} + \sigma(p^{\rho-1}-1)+ 2\frac{p^{\rho-1}-1}{p-1}
	\end{split}
	\end{align}

	\noindent \textbf{Case 3}: Finally, suppose that $\nu-\rho<d\le \nu-1$ and $h=\sigma+d-\nu$.

	Then, $h<\sigma$ and by condition \ref{epsilon1} if $i\in X_h$ then $v_p(i)=h\ge \rho+d-\nu>0$ and hence
	$l_i= p^{\sigma} + ip^{\nu-d}=p^{\sigma}(1+ip^{-h})$.
	Therefore $v_p(l_i)=\sigma+v_p(1+ip^{-h})$.
	Also, by condition \ref{epsilon1} we have $\rho\le \sigma\le \nu$, and therefore $h\le d$.
	Thus $h_i=\min(k_i,\rho+h)$.
	Observe that, as $1\le i \le p^{\mu}$, we have that $0\le v_p(1+ip^{-h})\le \mu-h$.
	For $0\le l \le \mu-h$ we set
	$$Y_l=\{i\in X_h : v_p(1+ip^{-h})=l\} \qand Z_l=\bigcup_{t=l}^{\mu-h} Y_t.$$
	The sets $Y_l$ with $l=0,1,\dots, \mu-h$ form a partition of $X_h$.
	A straightforward argument show that
	$$|Y_l|=\begin{cases}
	(p-2)p^{\mu-h-1}, & \text{if } l=0; \\
	\varphi(p^{\mu-h-l}), & \text{if } 1\le l < \mu-h; \\
	1, & \text{if } l=\mu-h;
	\end{cases} \qand
	|Z_l| = \begin{cases} \varphi(p^{\mu-h}), & \text{if } l=0; \\ p^{\mu-h-l}, & \text{if } 1\le l \le \mu-h.\end{cases}$$
	For each $i\in Y_l$ we have $k_i=\min(\mu,\sigma+l)$. 	Therefore, if $i\in Y_l$ then
	$$h_i=\begin{cases} \min(\mu,\rho+h), & \text{if } i\in Z_{\mu-\sigma}; \\
	\min(\sigma+l,\rho+h), & \text{otherwise}.\end{cases}$$
	By \Cref{CyclicConjugate}, each $\sim_d$-class inside $X_h$ is contained either in some $Y_l$
	with $l<\mu-\sigma$ or in $Z_{\mu-\sigma}$. Moreover two elements $i$ and $j$ in $Y_l$ with $l<\mu-\sigma$ belong to the same class if and only if $i\equiv j \mod p^{\min(\sigma+l,\rho+h)}$ while two elements in $Z_{\mu-\sigma}$ are in the same class if and only if $i\equiv j \mod p^{\min(\mu,\rho+h)}$.
	Recalling that $h=\sigma+d-\nu$ we deduce that if $l< \min(\mu-\sigma,\rho+d-\nu)$ then each class inside $Y_l$ has cardinality $p^{\mu-(\sigma+l)}$, while every class contained in $Z_{\min(\mu-\sigma,\rho+d-\nu)}$ has cardinality $p^{\mu-\min(\mu,\rho+h)}$.
	Having in mind that $\frac{|Z_{\min(\mu-\sigma,\rho+d-\nu)}|}{p^{\mu-\min(\mu,\rho+\sigma+d-\nu)}}=\frac{|Z_{\min(\mu-\sigma,\rho+d-\nu)+1}|}{p^{\mu-\min(\mu,\rho+\sigma+d-\nu)}}+\frac{|Y_{\min(\mu-\sigma,\rho+d-\nu)}|}{p^{\mu-(\sigma+\min(\mu-\sigma,\rho+d-\nu))}}$ we have
	\begin{align*}
		& N_{d,\sigma+d-\nu}= \frac{|Z_{\min(\mu-\sigma,\rho+d-\nu)+1}|}{p^{\mu-\min(\mu,\rho+\sigma+d-\nu)}} + \sum_{l=0}^{\min(\mu-\sigma,\rho+d-\nu)}  \frac{|Y_l|}{p^{\mu-(\sigma+l)}} \\
		& =   p^{\nu-d-1} + (p-2)p^{\nu-d-1} + \sum_{l=1}^{\min(\mu-\sigma,\rho+d-\nu)} \frac{\varphi(p^{\mu-\sigma+\nu-d-l})}{p^{\mu-\sigma-l}} \\
		&= (p-1)p^{\nu-d-1} + \min(\mu-\sigma,\rho+d-\nu)(p-1)p^{\nu-d-1} = (1+\min(\mu-\sigma,\rho+d-\nu))(p-1)p^{\nu-d-1}
	\end{align*}
	Thus
	\begin{align}\label{SumaIgual}
	\begin{split}
	& \sum_{d=\nu-\rho+1}^{\nu-1} N_{d,\sigma+d-\nu} =
	(p-1)\sum_{d=\nu-\rho+1}^{\nu-1} (1+\min(\mu-\sigma,\rho+d-\nu))p^{\nu-d-1} \\
	&= (p-1)\sum_{x=0}^{\rho-2}(1+\min(\mu-\sigma,\rho-x-1))p^x \\
	&= (p-1)\left(\sum_{x=0}^{\rho+\sigma-\mu-2}(1+\mu-\sigma) p^x +  \sum_{x=\rho+\sigma-\mu-1}^{\rho-2}(\rho-x)p^x\right) \\
	&= (1+\mu-\sigma) (p^{\rho+\sigma-\mu-1}-1) + \sum_{x=\rho+\sigma-\mu-1}^{\rho-2}(\rho-x)p^{x+1}-\sum_{x=\rho+\sigma-\mu-1}^{\rho-2}(\rho-x)p^x\\
	&= (1+\mu-\sigma) (p^{\rho+\sigma-\mu-1}-1) + \sum_{x=\rho+\sigma-\mu}^{\rho-1}(\rho-x+1)p^x-\sum_{x=\rho+\sigma-\mu-1}^{\rho-2}(\rho-x)p^x\\
	&= (1+\mu-\sigma) (p^{\rho+\sigma-\mu-1}-1) + 2 p^{\rho-1} + \sum_{x=\rho+\sigma-\mu}^{\rho-2} p^x - (\mu+1-\sigma)p^{\rho+\sigma-\mu-1} \\
	&= (\sigma-1-\mu) + 2 p^{\rho-1} + p^{\rho+\sigma-\mu}\sum_{x=0}^{\mu-\sigma-2} p^x
	= (\sigma-1-\mu) + 2 p^{\rho-1} + p^{\rho+\sigma-\mu}\frac{p^{\mu-\sigma-1}-1}{p-1} \\
	&= (\sigma-1-\mu) + 2 p^{\rho-1} + \frac{p^{\rho-1}-p^{\rho+\sigma-\mu}}{p-1}.
	\end{split}
	\end{align}

Combining \eqref{Nd=SumaNdh}, \eqref{dMenorOIgualn-r}, \eqref{SumaNoIgual}. and \eqref{SumaIgual}, and recalling that $N_\nu=\mu+1$, we finally obtain that the number of conjugacy classes of cyclic subgroups of $G$
\begin{align}
\begin{split}
\sum_{d=0}^{\nu} N_d=& \mu+1+\sum_{d=0}^{\nu-\rho} N_d +
\sum_{d=\nu-\rho+1}^{\nu-1} \left( 1+\sum_{h=0,h\ne \sigma+d-\nu}^{\min(\sigma,\rho+d)} N_{d,h}  + N_{d,\sigma+d-\nu}\right) \\
=& \mu+1+p^{\rho-1} \left( (\sigma-\rho)(p-1) \frac{1+2\nu-\rho-\sigma}{2} + (\nu-\rho+1)p \right)\\
&+(1-\rho)p^{\rho-1} + \sigma(p^{\rho-1}-1)+ 2\frac{p^{\rho-1}-1}{p-1} +
(\sigma-1-\mu) + 2 p^{\rho-1} + \frac{p^{\rho-1}-p^{\rho+\sigma-\mu}}{p-1}\\
=& p^{\rho-1}\sigma\left[1+(p-1)\frac{1+2\nu-\rho-\sigma}{2}\right] +
p^{\rho-1}\left( -\rho(p-1) \frac{1+2\nu-\rho-\sigma}{2} + (\nu-\rho+1)p \right)\\
&+(1-\rho)p^{\rho-1} + 2\frac{p^{\rho-1}-1}{p-1} + 2 p^{\rho-1} + \frac{p^{\rho-1}-p^{\rho+\sigma-\mu}}{p-1}\\
=& p^{\rho-1}\sigma\left[1+(p-1)\frac{1+2\nu-\sigma}{2}\right]  -\frac{p^{\rho+\sigma-\mu}}{p-1}\\
&+ \frac{3p^{\rho-1}-2}{p-1} +
p^{\rho-1}\frac{6-2\rho -\rho(p-1) (1+2\nu-\rho) + 2(\nu-\rho+1)p}{2}\\
=& p^{\rho-1}\sigma\left[1+(p-1)\frac{1+2\nu-\sigma}{2}\right]  -\frac{p^{\rho+\sigma-\mu}}{p-1}\\
&+ \frac{3p^{\rho-1}-2}{p-1} +
p^{\rho-1}\frac{6-\rho+2\nu\rho-\rho^2 +p(\rho^2 + 2\nu-3\rho-2\nu\rho+2)}{2} = A_\sigma+A.
\end{split}
\end{align}
\end{proof}

\begin{lemma}\label{CCN}
	If $\epsilon=-1$ then the number of conjugacy classes of $G$ is $3\cdot 2^{\nu-1} + 2^{\rho-1}(3\cdot 2^{\nu-1} - 2^{\nu+\rho-\mu})$.
\end{lemma}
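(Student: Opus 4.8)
The plan is to compute the number of conjugacy classes of $G$ directly from the metacyclic structure, reducing it to a count of cyclotomic classes. (This quantity, the number of irreducible complex characters of $G$, is an invariant of $\Q G$ since $\C G=\Q G\otimes_{\Q}\C$, which is why one may compute it in place of the cyclic‑subgroup count used in \Cref{CCCOdd}.) Write $G=\mathcal{P}_{2,\mu,\nu,\sigma,\rho,-1}$, so by \ref{rhomu}--\ref{epsilon-1} we have $p=2$, $\rho\ge 2$ and $\rho\le\mu$; the subgroup $\GEN{a}$ is normal of order $2^\mu$, $G/\GEN{a}$ is cyclic of order $2^\nu$ generated by $b\GEN{a}$, and conjugation by $b$ sends $a$ to $a^R$ with $R=2^\rho-1$. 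Every element of $G$ is $a^xb^y$ with $0\le x<2^\mu$ and $0\le y<2^\nu$, and since $G/\GEN{a}$ is abelian each conjugacy class lies in a single coset $b^k\GEN{a}$, $0\le k<2^\nu$. Identifying $b^ka^i$ with $i\in\Z/2^\mu\Z$, the same manipulations as in the proof of \Cref{CyclicConjugate} (using \eqref{Potencia}) yield $(b^ka^i)^{a^xb^y}=b^ka^{(i+x(1-R^k))R^y}$. Hence the conjugacy classes of $G$ contained in $b^k\GEN{a}$ are exactly the orbits of the group $T_k$ of affine transformations of $\Z/2^\mu\Z$ generated by $i\mapsto Ri$ and by the translations in $(1-R^k)\Z/2^\mu\Z=2^{e_k}\Z/2^\mu\Z$, where $e_k=\min(v_2(R^k-1),\mu)$ (with $v_2(0)=\infty$, so $e_0=\mu$). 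Since the translations in $T_k$ act transitively on the fibres of $\Z/2^\mu\Z\to\Z/2^{e_k}\Z$ and $i\mapsto Ri$ is compatible with that reduction, the number of $T_k$-orbits equals the number of orbits of $\GEN{R}$ on $\Z/2^{e_k}\Z$, i.e.\ the number $C_{R,2^{e_k}}$ of $R$-cyclotomic classes modulo $2^{e_k}$; see \eqref{Crn}. Thus the number of conjugacy classes of $G$ equals $\sum_{k=0}^{2^\nu-1}C_{R,2^{e_k}}$, and we note that $\sigma$ plays no role, consistent with its absence in the claimed formula.

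It remains to evaluate this sum. Since $v_2(R-1)=v_2(2^\rho-2)=1$ and $v_2(R+1)=v_2(2^\rho)=\rho\ge 2$, \Cref{PropEse}.\eqref{vpRm-1} gives $v_2(R^k-1)=1$ for $k$ odd and $v_2(R^k-1)=\rho+v_2(k)$ for $k$ even and non-zero. Grouping the $k$ with $0<k<2^\nu$ by $j=v_2(k)$ (there being $2^{\nu-1-j}$ of them for each $0\le j\le\nu-1$), the count becomes
$$C_{R,2^\mu}+2^{\nu-1}C_{R,2}+\sum_{j=1}^{\nu-1}2^{\nu-1-j}\,C_{R,2^{\min(\rho+j,\mu)}}.$$
Now \Cref{CyclotomicClasses} applies term by term: $C_{R,2}=2$ from the branch $m\le v_2(R-1)$ (valid since $v_2(R-1)=1$), while $C_{R,2^e}=1+2^{\rho-1}(1+e-\rho)$ for every $e\ge\rho$ from the branch $2\le v_2(R+1)\le e$; the latter covers both $e=\mu$ and $e=\min(\rho+j,\mu)$, which are $\ge\rho$ by \ref{rhomu}.

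Finally I would carry out the elementary summation. Put $M=\mu-\rho$; then $0\le M\le\nu$ by \ref{rhomu} and \ref{epsilon-1}, and $\min(\rho+j,\mu)-\rho=\min(j,M)$. Substituting the values above and separating off the part of the $j$-sum carrying the factor $2^{\rho-1}$, the only non-immediate ingredient is $\Sigma=\sum_{j=1}^{\nu-1}2^{\nu-1-j}\min(j,M)$; writing $\min(j,M)$ as the number of $l\in\{1,\dots,M\}$ with $l\le j$ and exchanging the order of summation gives $\Sigma=\sum_{l=1}^{M}(2^{\nu-l}-1)=2^\nu-2^{\nu-M}-M$ (alternatively one may expand and use \eqref{Sumad2d}). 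Together with $\sum_{j=1}^{\nu-1}2^{\nu-1-j}=2^{\nu-1}-1$ and $C_{R,2^\mu}=1+2^{\rho-1}(1+M)$, the total simplifies to $3\cdot 2^{\nu-1}+2^{\rho-1}(3\cdot 2^{\nu-1}-2^{\nu-M})=3\cdot 2^{\nu-1}+2^{\rho-1}(3\cdot 2^{\nu-1}-2^{\nu+\rho-\mu})$, the claimed value; the hypothesis $\mu\le\rho+\nu$ guarantees $\nu-M\ge 0$, so the displayed quantity is a genuine positive integer. I do not foresee a real obstacle: the work lies in the bookkeeping of the cyclotomic-class counts and the geometric sums, the one point to watch being that the odd values of $k$ enter through a different branch ($e_k=1<\rho$) of \Cref{CyclotomicClasses} than all the other values of $k$.
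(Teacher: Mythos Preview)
Your proof is correct and follows essentially the same approach as the paper: reduce to counting conjugacy classes in each coset $b^k\GEN{a}$, identify that count with the number $C_{R,2^{e_k}}$ of $R$-cyclotomic classes modulo $2^{e_k}$ via the conjugation formula, evaluate $e_k$ using \Cref{PropEse}, plug in \Cref{CyclotomicClasses}, and sum. The only cosmetic differences are that the paper conjugates by $b^ya^x$ (obtaining $i\mapsto x(1-R^j)+iR^y$) rather than by $a^xb^y$, phrases the orbit reduction via a congruence equation rather than your affine-group language, and carries out the final geometric sum by a direct application of \eqref{Sumad2d} rather than your interchange-of-summation trick for $\Sigma$; none of this changes the substance of the argument.
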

\begin{proof}
Every element of $G$ is of the form $b^ja^i$ with $0\le j <2^{\nu}$ and $0\le i < 2^{\mu}$. As $G/\GEN{a}$ is cyclic of order $2^{\nu}$, if $b^ja^i$ and $b^{j'}a^{i'}$ are conjugate then $j=j'$.
Let $R=-1+2^\rho$ and $d_j=\gcd(2^{\mu},R^j-1)$.
Then
	$$(b^ja^i)^{b^ya^x} = b^j a^{x(1-R^j)+iR^{y}}$$
This shows that $b^ja^i$ and $b^ja^{i'}$ belong to the same conjugacy class if and only if the congruence equation $X(1-R^j)+iR^y\equiv i' \mod 2^{\mu}$ has a solution if and only if $i'\equiv iR^y \mod d_j$ if and only if $i$ and $i'$ belong to the same $R$-cyclotomic class modulo $d_j$.
Therefore the number of conjugacy classes of elements of the form $b^ja^i$ is $C_{R,d_j}$, the number of $R$-conjugacy classes modulo $d_j$.
By \Cref{PropEse}.\eqref{vpRm-1} we have
	$$d_j=\begin{cases} 2, & \text{if } 2\nmid j; \\ 2^{\min(\mu,\rho+v_2(j))}, & \text{otherwise}.\end{cases}$$
Using \Cref{CyclotomicClasses} and having in mind that $R+1=2^{\rho}$ with $2\le \rho\le \mu$, we have
	$$C_{R,d_j} = \begin{cases} 2, & \text{if } j \nmid 2; \\
	1+2^{\rho-1}(1+\min(\mu-\rho,v_2(j))), & \text{otherwise}.\end{cases}$$
Therefore, as the number of integers $1\le j \le 2^{\nu}$ with $v_2(j)=k$ is $\varphi(2^{n-k})$, we deduce that that the number of conjugacy classes of $G$ is
	\begin{eqnarray*}
		\sum\limits_{j=1}^{2^{\nu}} C_{R,d_j} &=& 2^{\nu} + \sum\limits_{k=1}^{\nu}\varphi(2^{\nu-k})(1+2^{\rho-1}(1+\min(\mu-\rho,k))) \\ &=& 2^{\nu} + (1+2^{\rho-1})\sum\limits_{k=1}^{\nu}\varphi(2^{\nu-k}) + 2^{\rho-1}\sum\limits_{k=1}^{\nu}\varphi(2^{\nu-k}) \min (\mu-\rho,k)\\
		&=& 2^{\nu} + (1+2^{\rho-1})2^{\nu-1} + 2^{\rho-1}\sum\limits_{k=1}^{\mu-\rho-1}2^{\nu-k-1}k + 2^{\rho-1}\sum\limits_{k=\mu-\rho}^{\nu}\varphi(2^{\nu-k})(\mu-\rho)    \\
		&=& 3\cdot 2^{\nu-1} + 2^{\rho+\nu-2} + 2^{\rho-1}\sum\limits_{k=1}^{\mu-\rho-1}2^{\nu-k-1}k + (\mu-\rho)2^{\nu+2\rho-\mu-1}
	\end{eqnarray*}
	We calculate separately the sum in the third summand.
	If $\mu=\rho$ or $\mu=\rho+1$ the summand is zero, so we assume $\mu \ge \rho+2$. Then, using \eqref{Sumad2d} we obtain
	\begin{eqnarray*}
		2^{\rho-1}\sum\limits_{k=1}^{\mu-\rho-1}2^{\nu-k-1}k  &=&  2^{\nu+2\rho-\mu-1}\sum\limits_{k=1}^{\mu-\rho-1}2^{\mu-\rho-k-1}k\\
		&=& 2^{\nu+2\rho-\mu-1}\sum\limits_{i=0}^{\mu-\rho-2}2^{i}(\mu-\rho-i-1)\\
		&=& 2^{\nu+2\rho-\mu-1}\left( (\mu-\rho-1)\sum\limits_{i=0}^{\mu-\rho-2}2^{i}-\sum\limits_{i=0}^{\mu-\rho-2}2^{i}i \right)\\
		&=& 2^{\nu+2\rho-\mu-1}\left( (\mu-\rho-1)(2^{\mu-\rho-1}-1) -((\mu-\rho-3)2^{\mu-\rho-1}+2) \right)\\
		&=& 2^{\nu+2\rho-\mu-1}(2^{\mu-\rho} -\mu+\rho-1).
	\end{eqnarray*}
	Observe that replacing $\mu$ by $\rho$ or $\rho+1$ in the previous expression the result is zero
	So there is not need to distinguish cases and we finally obtain the desired formula for the number of conjugacy classes of $G$:
	\begin{eqnarray*}
		\sum\limits_{j=1}^{2^{\nu}} C_{R,d_j} &=& 3\cdot 2^{\nu-1} + 2^{\rho+\nu-2} + (\mu-\rho)2^{\nu+2\rho-\mu-1} +  2^{\nu+2\rho-\mu-1}(2^{\mu-\rho} -\mu+\rho-1)\\
		&=& 3\cdot 2^{\nu-1} + 3\cdot 2^{\rho+\nu-2} - 2^{\nu+2\rho-\mu-1}\\
		&=& 3\cdot 2^{\nu-1} + 2^{\rho-1}(3\cdot 2^{\nu-1} - 2^{\nu+\rho-\mu})
	\end{eqnarray*}
\end{proof}

\begin{lemma}\label{SpecialcaseNr}
Suppose that $\epsilon=-1$, $\rho\ge \mu-1$  and $\mu\ge 3$.
Then the following statements hold:
	\begin{enumerate}
		\item $\Q G$ has a simple component with center $\Q(\zeta_{2^{\mu}}+\zeta_{2^{\mu}}^{-1})$ if and only if $\rho=\sigma=\mu$.
		\item $\Q G$ has a simple component with center $\Q(\zeta_{2^{\mu}}-\zeta_{2^{\mu}}^{-1})$ if and only if $\rho=\mu-1$ and $\sigma=\mu$.
	\end{enumerate}
\end{lemma}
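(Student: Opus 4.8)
The plan is to run through the Wedderburn decomposition of $\Q G$ by means of \Cref{SSPMetabelian} and the explicit form \eqref{WCMetacyclic} of the simple components, using that $\Q(\zeta_{2^\mu}+\zeta_{2^\mu}^{-1})$ and $\Q(\zeta_{2^\mu}-\zeta_{2^\mu}^{-1})$ are, together with $\Q(\zeta_{2^{\mu-1}})$, the three index-$2$ subfields of $\Q(\zeta_{2^\mu})$, each of degree $2^{\mu-2}$ over $\Q$. I would first record the Galois theory: since $\mu\ge 3$, $\Gal(\Q(\zeta_{2^\mu})/\Q)\cong C_2\times C_{2^{\mu-2}}$ has exactly three involutions $\sigma_{-1},\sigma_{1+2^{\mu-1}},\sigma_{2^{\mu-1}-1}$, with fixed fields $\Q(\zeta_{2^\mu}+\zeta_{2^\mu}^{-1})$, $\Q(\zeta_{2^{\mu-1}})$ and $\Q(\zeta_{2^\mu}-\zeta_{2^\mu}^{-1})$ respectively, the first totally real and the last not. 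On the group side, since $\mu-1\le\rho\le\mu$, \Cref{PropEse}.\eqref{op} gives $o_{2^\mu}(R)\le 2$ for $R=-1+2^\rho$, so $A:=C_G(a)=\GEN{a,b^2}$ is abelian; it contains $G'=\GEN{a^2}$, is self-centralising (hence maximal abelian), and $[G:A]=2$.

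Next I would reduce the possibilities. By \Cref{SSPMetabelian} applied with this $A$, every simple component of $\Q G$ is $\Q Ge(G,H,K)$ with $A\le H$; since $[G:A]=2$, either $H=G$ or $H=A$, and by \eqref{WCMetacyclic} the component is a cyclic algebra $(\Q(\zeta_m)/F,\sigma_x,\zeta_m^y)$ with $m=[H:K]$ a $2$-power and centre $F=\Q(\zeta_m)^{\GEN{\sigma_x}}$, where $\GEN{\sigma_x}\cong N_G(K)/H$. If $H=G$, or more generally $N_G(K)=H$, then $F=\Q(\zeta_m)$, which for $2$-power $m$ is $\Q$ or an imaginary field, hence cannot be the totally real degree-$2^{\mu-2}$ field $\Q(\zeta_{2^\mu}+\zeta_{2^\mu}^{-1})$, nor (comparing the three involutions above, noting $\Q(\zeta_{2^{\mu-1}})$ is fixed by $\sigma_{1+2^{\mu-1}}$) the field $\Q(\zeta_{2^\mu}-\zeta_{2^\mu}^{-1})$. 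Hence for either target centre one has $H=A$, $K\unlhd G$ and $N_G(K)/H\cong G/A\cong C_2$, so $[\Q(\zeta_m):F]=2$; combined with $[F:\Q]=2^{\mu-2}$ this gives $\varphi(m)=2^{\mu-1}$, i.e. $m=2^\mu$, so $A/K\cong C_{2^\mu}$ and $|K|=2^{\nu-1}$.

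The decisive step is then to identify the admissible pairs. Taking $b$ as a representative of the nontrivial coset of $A$ in $G=N_G(K)$ and $\bar c$ a generator of $A/K\cong C_{2^\mu}$, the attached automorphism $\sigma_x$ satisfies $\bar c^{\,b}=\bar c^{\,x}$. Using $a^b=a^R$, the relation $b^{2^\nu}=a^{2^\sigma}$, and $v_2(R-1)=1$, I would show that the admissible $K$ are exactly the complements of $\GEN{a}$ in $A$ that are normal in $G$ — which forces $K\cap\GEN{a}=1$, so $aK$ generates $A/K$ and $x\equiv R\pmod{2^\mu}$, and forces $K\in\{\GEN{b^2},\GEN{a^{2^{\mu-1}}b^2}\}$ — and that the order condition $(a^sb^2)^{2^{\nu-1}}=1$ for such a complement, together with the inequalities $\mu-1\le\sigma\le\mu$ and $\rho+\nu\ne\sigma$ of \ref{epsilon-1}, leaves only $\sigma=\mu$. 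Since $x\equiv R=-1+2^\rho$, one has $\GEN{\sigma_x}=\GEN{\sigma_{-1}}$ exactly when $\rho=\mu$, $\GEN{\sigma_x}=\GEN{\sigma_{2^{\mu-1}-1}}$ exactly when $\rho=\mu-1$, and $\GEN{\sigma_x}=\GEN{\sigma_{1+2^{\mu-1}}}$ never (as $v_2(R-1)=1<\mu-1$). This yields the ``only if'' directions of (1) and (2). For the converses, assuming $\rho=\sigma=\mu$ (resp. $\rho=\mu-1$, $\sigma=\mu$) I would check that $(A,\GEN{b^2})$ satisfies conditions (1) and (2) of \Cref{SSPMetabelian}, hence is a strong Shoda pair, and that \eqref{WCMetacyclic} produces a component with centre $\Q(\zeta_{2^\mu})^{\GEN{\sigma_R}}=\Q(\zeta_{2^\mu}+\zeta_{2^\mu}^{-1})$ (resp. $\Q(\zeta_{2^\mu}-\zeta_{2^\mu}^{-1})$).

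The main obstacle is the third step: classifying the normal subgroups $K\le A$ with $A/K\cong C_{2^\mu}$ while simultaneously controlling the $G$-action on $A/K$ and the constraint $b^{2^\nu}=a^{2^\sigma}$ — this is where all the restrictions in \ref{epsilon-1} are needed to discard the parasitic cases. I expect (2) to be marginally more delicate than (1): over the imaginary field $\Q(\zeta_{2^\mu}-\zeta_{2^\mu}^{-1})$ one cannot short-circuit the reduction $m=2^\mu$ by a ``the relevant Galois group must be cyclic'' argument, whereas over the totally real field $\Q(\zeta_{2^\mu}+\zeta_{2^\mu}^{-1})$ that argument is available as an alternative to the conductor count above.
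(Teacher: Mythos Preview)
Your proposal is correct and follows essentially the same route as the paper: both exploit that $A=C_G(a)=\GEN{a,b^2}$ is maximal abelian of index $2$, reduce via \Cref{SSPMetabelian} to a strong Shoda pair $(A,K)$ with $K\unlhd G$ and $[A:K]=2^\mu$, and then read off the centre from the Galois action $\sigma_R$ with $R=-1+2^\rho$. The only cosmetic difference is in the ``decisive step'': where you classify the normal complements of $\GEN{a}$ in $A$ directly via $(a^sb^2)^b=a^{sR}b^2$, the paper instead argues $K\cap G'=1\Rightarrow K\subseteq Z(G)$ and then uses the explicit structure of $Z(G)$ (namely $Z(G)=\GEN{b^2}$ if $\sigma=\mu-1$, versus $Z(G)=\GEN{a^{2^{\mu-1}}}\times\GEN{b^2}$ if $\sigma=\mu$) to pin down $K$ and exclude the case $\sigma=\mu-1$.
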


\begin{proof}
	Let $H=C_G(a)$ and $K_0=\GEN{b^2}$.
	The assumption $\rho\ge \mu-1$ implies that $H=\GEN{a,b^2}$ is a maximal abelian subgroup of $G$.
	Then $(H,K_0)$ satisfy the conditions in \Cref{SSPMetabelian} and hence $\Q Ge(G,H,K_0)$ is a simple component of $\Q G$. Moreover, by \Cref{SSPAlg} we have
	$$Z(\Q G e(G,H,K_0)) \cong \begin{cases} \Q(\zeta_{2^{\mu}}+\zeta_{2^{\mu}}^{-1}), & \text{if } \rho=\sigma=\mu; \\
	\Q(\zeta_{2^{\mu}}-\zeta_{2^{\mu}}^{-1}), & \text{if } \rho=\mu-1 \text{ and } \sigma=\mu; \\
	\Q(\zeta_{2^{\mu-1}}+\zeta_{2^{\mu-1}}^{-1}), & \text{if } \rho=\sigma=\mu-1.
	\end{cases}$$
	This proves the reverse implication of (1) and (2).

	Conversely suppose that $A$ is a simple component of $\Q G$ with center $\Q(\zeta_{2^{\mu}}+\zeta_{2^{\mu}}^{-1})$ or $\Q(\zeta_{2^{\mu}}-\zeta_{2^{\mu}}^{-1})$.
	Since $\mu\ge 3$, this fields are not cyclotomic extensions of $\Q$ and therefore $A$ is not commutative, for otherwise $A$ will be a Wedderburn component of $\Q(G/G')$ and the Wedderburn components of a commutative rational group algebra are cyclotomic extensions of $\Q$.
	As $H$ is maximal abelian in $G$ and $G/H\cong C_2$ there is a pair $(H_1,K)$ of subgroups of $G$ satisfying the conditions of \Cref{SSPMetabelian} and $H_1 \in \{H,G\}$. However, $H_1\ne G$ because $A$ is not commutative.
	Therefore $H=H_1$.
	If $K$ is not normal in $G$ then $N_G(K)=H$ and hence $A\cong M_2(\Q(\zeta_{[H:K]}))$ contradicting the fact that the center of $A$ is not cyclotomic.
	Thus $K$ is normal in $G$ and the center of $A$ has index $2$ in $\Q(\zeta_{[H:K]})$.
	By \Cref{SSPAlg}, $\varphi([H:K])=2\dim Z(A)=2^{\mu-1}$ and hence $[H:K]=2^{\mu}$.
	Another consequence of \Cref{SSPAlg} and the fact that $A$ is not commutative is that $H\ne \GEN{K,b^2}$ and as $H/K=\GEN{aK,b^2K}$ is a cyclic $2$-group it follows that $H=\GEN{K,a}$.
	As $[H:K]=2^{\mu}=|a|$ we have $a^{2^{\mu-1}}\not\in K$.
	Thus $G'\cap K=1$. As $K$ is normal in $G$, it follows that $K\subseteq Z(G)=\GEN{a^{2^{\mu-1}},b^2}$. If $\sigma=\mu-1$ then $Z(G)=\GEN{b^2}$ and its order is $2^{\nu}$. Then $K=\GEN{b^4}$ which is not possible because $H/\GEN{b^4}$ is not cyclic.
	Thus $\sigma=\mu$ and $Z(G)=\GEN{a^{2^{\mu-1}}}\times \GEN{b^2}$. Then $K=\GEN{b^2}$ or $K=\GEN{a^{2^{\mu-1}}b^2}$.
	Arguing as in the first paragraph we deduce that $Z(\Q Ge(G,H,K))=\Q(\zeta_{2^{\mu}}+\zeta_{2^{\mu}}^{-1})$ if $\rho=\mu$ and $Z(\Q Ge(G,H,K))=\Q(\zeta_{2^{\mu}}-\zeta_{2^{\mu}}^{-1})$ if $\rho=\mu-1$.
\end{proof}

\begin{lemma}\label{GroupsNscalculus}
Suppose that $\epsilon=-1$ and $\rho<\mu<\nu+\rho$.
Let $F=\{\alpha \in \Q(\zeta_{2^{\mu}}) : \sigma_{-1+2^\rho}(\alpha)=\alpha\}$.
Then $\Q G$ has a simple component of degree $2^{\mu-\rho}$ and center $F$ if and only if $\sigma=\mu$.
\end{lemma}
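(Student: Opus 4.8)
The plan is to realise the relevant Wedderburn components through strong Shoda pairs, using \Cref{SSPMetabelian} and \Cref{SSPAlg}. Write $R=-1+2^\rho$; since $\epsilon=-1$ we have $p=2$, and condition \ref{epsilon-1} leaves only $\sigma\in\{\mu-1,\mu\}$. As preliminaries: by \Cref{PropEse}.\eqref{op}, with $v_2(R-1)=1$ and $v_2(R+1)=\rho$, one has $o_{2^\mu}(R)=2^{\mu-\rho}$, so $[\Q(\zeta_{2^\mu}):F]=2^{\mu-\rho}$ and $[F:\Q]=2^{\rho-1}$; moreover $A:=C_G(\langle a\rangle)=\langle a,b^{2^{\mu-\rho}}\rangle$ (because conjugation by $b^{2^{\mu-\rho}}$ acts trivially on $\langle a\rangle$) is a maximal abelian subgroup of $G$ containing $G'=\langle a^2\rangle$, of index $2^{\mu-\rho}$ in $G$.

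For the ``if'' direction assume $\sigma=\mu$, so $\langle a\rangle\cap\langle b\rangle=1$. I would take $H=A$ and $K=\langle b^{2^{\mu-\rho}}\rangle$: then $K$ is central in $G$, $A/K=\langle aK\rangle$ is cyclic of order $2^\mu$, and $A$ is a maximal element of $\{C\le G:A\le C,\ C'\le K\le C\}$ (for $A\subsetneq C\le G$ the group $C'$ is a nontrivial subgroup of $\langle a\rangle$, while $K\cap\langle a\rangle=1$), so $(A,K)$ is a strong Shoda pair by \Cref{SSPMetabelian}. By \Cref{SSPAlg}, $\Q G\,e(G,A,K)$ is then a simple component of degree $[G:A]=2^{\mu-\rho}$ whose center is $\Q(\zeta_{2^\mu})^{\langle\sigma_R\rangle}=F$, the Galois action of $G/A=\langle bA\rangle$ being determined by $a^b=a^R$.

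For the ``only if'' direction I would argue by contradiction: assume $\Q G$ has a simple component $B$ of degree $2^{\mu-\rho}$ and center $F$ while $\sigma=\mu-1$. If $\mu=\rho+1$, then $R\equiv-(1+2^{\mu-1})\pmod{2^\mu}$, so $\sigma_R$ has order $2$ and fixes $\zeta_{2^\mu}-\zeta_{2^\mu}^{-1}$; a degree count gives $F=\Q(\zeta_{2^\mu}-\zeta_{2^\mu}^{-1})$, and as $\rho=\mu-1$ and $\mu\ge3$, \Cref{SpecialcaseNr}(2) forces $\sigma=\mu$, a contradiction. If $\mu\ge\rho+2$, set $t=\nu+\rho-\mu\ge1$ and $N=\langle a^{2^{\mu-1}}\rangle=\langle b^{2^\nu}\rangle$, a central subgroup of order $2$, so $\Q G=\Q G\widehat{N}\oplus\Q G(1-\widehat{N})$ with $\Q G\widehat{N}\cong\Q(G/N)$. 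Since $G/N$ is metacyclic with $C_{G/N}(\langle\bar a\rangle)$ of index $2^{\mu-1-\rho}$, every component of $\Q(G/N)$ has degree at most $2^{\mu-1-\rho}<2^{\mu-\rho}$, so $B$ is a component of $\Q G(1-\widehat{N})$. Writing $B=\Q G\,e(G,H,K)$ via \Cref{SSPMetabelian} with the maximal abelian subgroup $A$, the equality $[G:H]=2^{\mu-\rho}=[G:A]$ forces $H=A$, while $B\subseteq\Q G(1-\widehat{N})$ translates, through the last clause of \Cref{SSPAlg}, into $a^{2^{\mu-1}}\notin K$. Now $a^{2^{\mu-1}}$ is the unique involution of each of the cyclic groups $\langle a\rangle$ and $\langle b^{2^{\mu-\rho}}\rangle$ (the latter of order $2^{t+1}$ since $\sigma=\mu-1$), so $K$ meets both trivially; since $A$ is the central product of these two groups over $\langle a^{2^{\mu-1}}\rangle$, $K$ is cyclic with $[A:K]=2^{\max(\mu,t+1)}$. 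The relation $[G:N_G(K)]\cdot[N_G(K):H]=2^{\mu-\rho}$ together with $[N_G(K):H]=\varphi([A:K])/2^{\rho-1}$ (forced by the center being $F$) is impossible when $t\ge\mu$ and, when $t\le\mu-1$, forces $[A:K]=2^\mu$ and $N_G(K)=G$. But for $t\le\mu-1$ one has $A\cong C_{2^\mu}\times C_{2^t}$, and a cyclic subgroup of order $2^t$ of $A$ meeting $\langle a\rangle$ trivially is necessarily $K=\langle b^{2^{\mu-\rho}}a^j\rangle$ with $v_2(j)=\mu-1-t$; conjugation by $b$ fixes $b^{2^{\mu-\rho}}$ and sends $a^j$ to $a^{jR}$, so $(b^{2^{\mu-\rho}}a^j)^b=b^{2^{\mu-\rho}}a^{jR}\notin K$ (membership would require $a^{j(R-1)}=1$, i.e.\ $\mu\le v_2(j)+v_2(R-1)=\mu-t$), contradicting $N_G(K)=G$. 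Hence $\sigma=\mu$.

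The preliminary computations and the reduction through $G/N$ are routine given \Cref{PropEse} and the classification. The crux, which I expect to be the main difficulty, is the end of the ``only if'' argument: one must narrow $K$ down to a single candidate from $a^{2^{\mu-1}}\notin K$, the central-product shape of $A$, and the exact value $[F:\Q]=2^{\rho-1}$ (which, combined with $\Deg(B)=2^{\mu-\rho}$, pins down $[A:K]$ and forces $K\unlhd G$), and then check that this candidate fails to be normal — the obstruction being precisely the hypothesis $\sigma=\mu-1$, under which $b^{2^{\mu-\rho}}$ has order $2^{t+1}>2$, so that $v_2(j)=\mu-1-t<\mu-1$.
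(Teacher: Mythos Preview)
Your argument is correct. The ``if'' direction coincides with the paper's. For ``only if'' the paper proceeds more structurally: after $H=A$ it uses the dichotomy $H=\langle a,K\rangle$ versus $H=\langle b^{2^{\mu-\rho}},K\rangle$, ruling out the second because $b^{2^{\mu-\rho}}\in Z(G)$ forces $N_G(K)=H$ and hence a cyclotomic center; in the first case it obtains $K\cap\langle a\rangle=1$ from the maximality clause of \Cref{SSPMetabelian} (via $\langle a,b^{2^{\mu-\rho-1}}\rangle'=\langle a^{2^{\mu-1}}\rangle$), deduces $[H:K]=2^{\mu}$ and $K\unlhd G$ exactly as you do, and finishes by observing that $K\cap G'=1$ gives $K\subseteq Z(G)=\langle b^{2^{\mu-\rho}}\rangle$, whence any nontrivial $K$ contains the involution $a^{2^{\mu-1}}$---contradiction. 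Your route instead extracts $a^{2^{\mu-1}}\notin K$ from the degree bound on $\Q(G/N)$ together with the $\Core_G(K)$ clause of \Cref{SSPAlg}, and then pins $K$ down explicitly and checks non-normality by hand. The paper's ending is shorter and avoids your case split on $t$; on the other hand, your separate treatment of $\mu=\rho+1$ via \Cref{SpecialcaseNr} is not wasted effort, since the identity $\langle a,b^{2^{\mu-\rho-1}}\rangle'=\langle a^{2^{\mu-1}}\rangle$ used in the paper's maximality step requires $\mu\ge\rho+2$.
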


\begin{proof}
Let $H=\GEN{a,b^{2^{\mu-\rho}}}$.
Suppose that $\sigma=\mu$ and let $K=\GEN{b^{2^{\mu-\rho}}}$.
Then $(H,K)$ satisfies the conditions of \Cref{SSPMetabelian}, and by \Cref{SSPAlg}, we have that $\Q Ge(G,H,K)$ has degree $[G:H]=2^{\mu-\rho}$ and center $F$.

Otherwise, by condition \ref{epsilon-1} in \Cref{ClasiMeta} we have $\sigma=\mu-1$.
By means of contradiction suppose that $\Q G$ has a simple component $A$ of degree $2^{\mu-\rho}$ and center $F$.
Then $H=\GEN{a,b^{2^{\mu-\rho}}}$.
As $H$ is maximal abelian subgroup of $G$ with $G/H$ abelian, by \Cref{SSPMetabelian}, we have $A=\Q Ge(G,H_1,K)$ for subgroups $H_1$ and $K$ satisfying the conditions of \Cref{SSPMetabelian} and $H_1\supseteq H$.
However, by \Cref{SSPAlg}, $[G:H]=2^{\mu-\rho}=\Deg(A)=[G:H_1]$ and hence $H_1=H$.
As $H/K$ is cyclic, either $H=\GEN{a,K}$ or $H=\GEN{b^{2^{\mu-\rho}},K}$.
In the second case $N_G(K)/K$ is abelian and by \Cref{SSPAlg}, the center $F$ of $A$ is a cyclotomic extension of $\Q$, which is not the case.
Therefore $H=\GEN{a,K}$.
In particular $[H:K]\le |a|=2^{\mu}$.
If $a^{2^{\mu-1}}\in K$ then $\GEN{a^{2^{\mu-1}}}=\GEN{a,b^{2^{\mu-\rho-1}}}'\subseteq K\unlhd \GEN{a,b^{2^{\mu-\rho-1}}}$ and $\GEN{a,b^{2^{\mu-\rho-1}}}$ contains properly $H$, in contradiction with the assumption that $(H,K)$ satisfy condition (1) of \Cref{SSPMetabelian}.
Therefore $K\cap \GEN{a}=1$ and hence $[H:K]\ge |a|=2^{\mu}$.
So $[H:K]=2^{\mu}$. As $N_G(K)/H\cong \Gal(\Q(\zeta_{[H:K]})/F)$, we have $[N_G(K):H]=[\Q(\zeta_{[H:K]}):F]=2^{\mu-\rho}=[G:H]$ and hence $G=N_G(K)$, i.e. $K\unlhd G$. As $K\cap G'=1$ it follows that $K\subseteq Z(G)=\GEN{a^{2^{\mu-1}},b^{2^{\mu-\rho}}}=\GEN{b^{2^{\mu-\rho}}}$.
Finally, the assumption $\mu<\nu+\rho$ implies that $H$ contains $\GEN{a}$ properly. Therefore $|H|>2^{\mu}$ and hence $K$ is a non-trivial subgroup of the cyclic subgroup $\GEN{b^{2^{\mu-\rho}}}$. Thus $K$  contains the unique element of order $2$ of $Z(G)$, namely $a^{2^{\mu-1}}$, a contradiction.
\end{proof}

We are ready to prove the main result of this section.
\begin{theorem}\label{Isop}
Let $p$ be prime integer. If $G_1$ and $G_2$ are finite metacyclic $p$-groups and $\Q G_1\cong \Q G_2$ then $G_1\cong G_2$.
\end{theorem}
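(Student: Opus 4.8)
The plan is to recover each of the five invariants in the normal form of \Cref{ClasiMeta} from the rational group algebra, so that $G_1\cong G_2$ follows from the uniqueness part of that theorem. Write $G_i\cong\mathcal{P}_{p,\mu_i,\nu_i,\sigma_i,\rho_i,\epsilon_i}$. Two isomorphism invariants of $\Q G_i$ are immediate: $\dim_{\Q}\Q G_i=|G_i|=p^{\mu_i+\nu_i}$, so $\mu_1+\nu_1=\mu_2+\nu_2$; and $\dim_{\Q}Z(\Q G_i)$, being the sum of the degrees $[\Q(\chi):\Q]$ over the Wedderburn components, equals the number of complex irreducible characters and hence the number of conjugacy classes of $G_i$, which \Cref{CCN} computes explicitly in terms of $(\mu_i,\nu_i,\rho_i)$ when $\epsilon_i=-1$. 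A further invariant is the number of Wedderburn components of $\Q G_i$ (the number of primitive central idempotents), which by \Cref{Artin} equals the number of conjugacy classes of cyclic subgroups of $G_i$, computed in \Cref{CCCOdd} as $A_{\sigma_i}+A$ --- with $A$ depending only on $(p,\rho_i,\nu_i)$ --- when $\epsilon_i=1$. Finally \Cref{Abelianizado} gives $G_1/G_1'\cong G_2/G_2'$.

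First I would dispose of the abelian case: $\mathcal{P}_{p,\mu,\nu,\sigma,\rho,\epsilon}$ is abelian exactly when $\epsilon=1$ and $\rho=\mu$, and commutativity of the algebra is an isomorphism invariant, so either both $G_i$ are abelian and Perlis--Walker (via \Cref{Abelianizado}) finishes, or neither is; assume the latter. A direct computation of $G_i'$ then shows that $G_i/G_i'$ pins down $\nu_i$ and $\rho_i$ when $\epsilon_i=1$ (here $\rho_i\le\sigma_i\le\nu_i$ by condition~\ref{epsilon1}), and $\nu_i$ when $\epsilon_i=-1$. Hence in the case $\epsilon_1=\epsilon_2=1$ we already know $\mu_i,\nu_i,\rho_i$, and only $\sigma_i$ remains; it is recovered from the number of Wedderburn components, since with $A$ known we know $A_{\sigma_i}$, and it then suffices to check that $\sigma\mapsto A_\sigma$ is injective on the admissible range $\max(\rho,\mu-\rho)\le\sigma\le\min(\mu,\nu)$ --- which I would do by a monotonicity estimate, the dominant summand $p^{\rho-1}\sigma\bigl(1+(p-1)\tfrac{1+2\nu-\sigma}{2}\bigr)$ being strictly increasing there and dominating the correction $-p^{\rho+\sigma-\mu}/(p-1)$.

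In the case $\epsilon_1=\epsilon_2=-1$ one has $p=2$, recovers $\mu_i,\nu_i$ from $|G_i|$ and the abelianization (with $\nu_i\le 1$ handled separately), and must determine $\sigma_i\in\{\mu_i-1,\mu_i\}$ and $\rho_i$: the conjugacy-class formula of \Cref{CCN} constrains $\rho_i$, and whatever ambiguity survives both it and $\dim_{\Q}Z(\Q G_i)$ is resolved by detecting distinguished Wedderburn components, an isomorphism invariant --- \Cref{SpecialcaseNr} recognizing, when $\rho\ge\mu-1$ and $\mu\ge 3$, whether $\Q G$ has a simple component with centre $\Q(\zeta_{2^\mu}+\zeta_{2^\mu}^{-1})$ or $\Q(\zeta_{2^\mu}-\zeta_{2^\mu}^{-1})$ (separating the surviving pairs $(\rho,\sigma)$ with $\rho,\sigma\in\{\mu-1,\mu\}$), and \Cref{GroupsNscalculus} recognizing, when $\rho<\mu<\nu+\rho$, whether $\Q G$ has a component of degree $2^{\mu-\rho}$ and centre $\Q(\zeta_{2^\mu})^{\sigma_{-1+2^\rho}}$ (which happens precisely when $\sigma=\mu$). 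The cross case $\epsilon_1\neq\epsilon_2$, say $\epsilon_1=1$ and $\epsilon_2=-1$, forces $p=2$, and matching $G_1/G_1'\cong G_2/G_2'$ forces $\rho_1=1$, $\nu_1=\nu_2$, hence $\mu_1=\mu_2$, and then $\mu_1\ge 3$ by condition~\ref{rhomu}; this I would exclude by the same fingerprinting, together with the numerical invariants $|G_i|$, the number of conjugacy classes and the number of Wedderburn components, checking that no component of $\Q G_1$ can match the distinguished components that \Cref{SpecialcaseNr} and \Cref{GroupsNscalculus} attach to $\Q G_2$.

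The substance of the argument is this bookkeeping rather than any single device: establishing that the counting functions of \Cref{CCCOdd} and \Cref{CCN}, together with the special-component invariants, separate every pair of distinct admissible tuples subject to conditions \ref{rhomu}--\ref{epsilon-1}. The most delicate point is separating $\epsilon=1$ with $\rho=1$ from $\epsilon=-1$, since both yield a $C_2\times C_{2^\nu}$ abelianization and the same range $\{\mu-1,\mu\}$ for $\sigma$, and it is exactly for this that the precise centre identifications in \Cref{SpecialcaseNr} and \Cref{GroupsNscalculus} are needed.
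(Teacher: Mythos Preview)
Your overall architecture is the paper's: recover $(\mu,\nu,\rho,\epsilon)$ from $|G|$ and $G/G'$, then separate the remaining parameters using \Cref{CCCOdd} (via \Cref{Artin}) in the $\epsilon=1$ branch and \Cref{CCN}, \Cref{SpecialcaseNr}, \Cref{GroupsNscalculus} in the $\epsilon=-1$ branch. Your monotonicity argument for $\sigma\mapsto A_\sigma$ is a legitimate variant of the paper's route (which instead clears denominators, compares $p$-adic valuations to force $\sigma_1=\mu-1$, $\sigma_2=\mu$, and then computes the single difference $B_\mu-B_{\mu-1}$); both give the same conclusion.

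The genuine gap is the cross case $\epsilon_1=1$, $\epsilon_2=-1$. Your inference ``$\rho_1=1$ forces $\mu_1\ge 3$ by condition~\ref{rhomu}'' reads the inequality in \ref{rhomu} the wrong way: the intended condition is that for $p=2$ and $\mu\ge 2$ one has $\rho\ge 2$ (otherwise the $\epsilon=1$ presentation with $\rho=1$ would duplicate groups already captured by the $\epsilon=-1$ family). With that reading, $\rho_1=1$ and $p=2$ force $\mu_1\le 1$, hence $\mu_1=\rho_1=1$ and $G_1$ is abelian, contradicting the non-abelian reduction you already made. So the cross case is \emph{vacuous}, and there is nothing ``delicate'' to fingerprint. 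Your proposed plan---showing that no Wedderburn component of $\Q G_1$ matches the distinguished components of \Cref{SpecialcaseNr} or \Cref{GroupsNscalculus}---would require proving fresh non-existence statements for the $\epsilon=1$, $\rho=1$ family, which those lemmas do not supply; fortunately this work is unnecessary.

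A smaller point in the $\epsilon=-1$ branch: the conjugacy-class count of \Cref{CCN} gives $g(\rho)=3\cdot 2^{\rho+\nu-2}-2^{2\rho+\nu-\mu-1}$, which satisfies $g(\mu-1)=g(\mu)$, so CCN alone leaves exactly the ambiguity $\rho\in\{\mu-1,\mu\}$. When in addition $\sigma_2=\mu-1$ and $\rho_2=\mu$, condition~\ref{epsilon-1} forces $\nu=1$, and then $G_2$ is generalized quaternion while $G_1$ (with $\rho_1=\mu-1$) is dihedral or semidihedral; the paper separates these by noting that only the quaternion algebra has a noncommutative division algebra as a Wedderburn component. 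Your phrase ``whatever ambiguity survives is resolved by detecting distinguished Wedderburn components'' is correct in spirit, but \Cref{SpecialcaseNr} by itself does not separate $(\rho,\sigma)=(\mu,\mu-1)$ from $(\mu-1,\mu-1)$---you need this extra observation (or an equivalent) to close that sub-case.
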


\begin{proof}
Suppose that $\Q G_1\cong \Q G_2$.
By \Cref{ClasiMeta}, we have
$G_i\cong \mathcal{P}_{p,\mu_i,\nu,\sigma_i,\rho_i,\epsilon_i}$
with each list $\mu_i,\nu_i,\sigma_i,\rho_i,\epsilon_i$ satisfying conditions \ref{rhomu}-\ref{epsilon-1}.
We will prove that $(\mu_1,\nu_1,\sigma_1,\rho_1,\epsilon_1)=(\mu_2,\nu_2,\sigma_2,\rho_2,\epsilon_2)$.

First of all $p^{\mu_1+\nu_1}=|G_1|=|G_2|=p^{\mu_2+\nu_2}$ and hence
$\mu_1+\nu_1=\mu_2+\nu_2$.
Moreover, by \Cref{Abelianizado} we have $G_1/G'_1\cong G_2/G'_2$ and from conditions \ref{epsilon1} and \ref{epsilon-1} it follows that
$$G_i/G'_i \cong \begin{cases} C_{p^{\rho_i}}\times C_{p^{\nu_i}}, & \text{if } \epsilon_i=1, \\
C_2\times C_{2^{\nu_i}}, & \text{if } \epsilon_i=-1\end{cases}$$

Suppose that $\epsilon_1=1$ and $\epsilon_2=-1$.
Then $C_{2^{\rho_1}}\times C_{2^{\nu_i}}\cong C_2\times C_{2^{\nu_2}}$, by \Cref{Abelianizado}, and by conditions \ref{epsilon1} and \ref{epsilon-1} we have  $p=2$,  $\rho_1\le \nu_1$, $2\le \rho_2$ and $1\le \nu_2$. Therefore $\rho_1=1$ and hence $\mu_1=1$ by condition \ref{rhomu}.
This implies that $G_1$ is abelian but $G_2$ is not abelian, in contradiction with $\Q G_1\cong \Q G_2$.
This proves that $\epsilon_1=\epsilon_2$, which we denote $\epsilon$ from now on.

Moreover, if $\epsilon=1$ then $C_{p^{\rho_1}}\times C_{p^{\nu_1}}\cong C_{p^{\rho_2}}\times C_{p^{\nu_2}}$ with $\rho_i\le \nu_i$, and if $\epsilon=-1$ then $C_2\times C_{2^{\nu_1}}\cong C_2\times C_{2^{\nu_2}}$ and $1\le \nu_1,\nu_2$. Thus, in both cases $\nu_1=\nu_2$, and hence $\mu_1=\mu_2$.
From now on we set $\mu=\mu_i$ and $\nu=\nu_i$.
Suppose that $\epsilon=1$ then $C_{p^{\rho_1}}\times C_{p^{\nu_1}}\cong C_{p^{\rho_2}}\times C_{p^{\nu_2}}$ and hence $\rho_1=\rho_2$, which we denote $\rho$.
Moreover, by Artin's Theorem (\Cref{Artin}), the number of Wedderburn components of $\Q G_i$ is the number of conjugacy classes of subgroups of $G_i$.
Therefore if $A_{\sigma_1}$ and $A_{\sigma_2}$ are as defined in
\Cref{CCCOdd} then we have $A_{\sigma_1}=A_{\sigma_2}$.
Let
	$$B_{\sigma_i}=2p^{\mu-\rho}(p-1)A_i =
	-2p^{\sigma_i}+\sigma_i p^{\mu-1}(p-1)(2+(p-1)(1+2\nu-\sigma_i)).$$
Then $B_{\sigma_1}=B_{\sigma_2}$.
By means of contradiction, assume without loss of generality that $\sigma_1<\sigma_2$.
By condition \ref{epsilon1} we have $\sigma_1<\sigma_2\le \mu\le \nu+\rho$. If $\sigma_1<\mu-1$ then $\min(\sigma_2,\mu-1)\le v_p(B_{\sigma_2})=v_p(B_{\sigma_1})=\sigma_1<\mu-1$, which contradicts the assumption $\sigma_2>\sigma_1$. Therefore, $\mu-1\le \sigma_1<\sigma_2\le \min(\mu,\nu)$, i.e. $\sigma_1=\mu-1$ and $\sigma_2=\mu\le \nu$.
Then
\begin{eqnarray*}
	0&=&B_\mu-B_{\mu-1} \\
	&=&
	-2p^\mu+\mu p^{\mu-1}(p-1)(2+(p-1)(2\nu+1-\mu)) \\
	&&+2p^{\mu-1}-(\mu-1) p^{\mu-1}(p-1)(2+(p-1)(2\nu+1-(\mu-1))) \\
	&=&
	p^{\mu-1}(p-1)[ -2+\mu (2+(p-1)(2\nu+1-\mu)) -(\mu-1) (2+(p-1)(2\nu+2-\mu))] \\
	&=&
	p^{\mu-1}(p-1)[ -2+2\mu-2(\mu-1)+\mu (p-1)(2\nu+1-\mu)
	-(\mu-1) (p-1)(2\nu+2-\mu)] \\
	&=&
	p^{\mu-1}(p-1)[ \mu (p-1)(2\nu+1-\mu)
	-\mu (p-1)(2\nu+2-\mu)+(p-1)(2\nu+2-\mu)]
	\\
	&=&
	2p^{\mu-1}(p-1)^2(\nu+1-\mu) >0,
\end{eqnarray*}
which is the desired contradiction.

Suppose now that $\epsilon=-1$.
We first prove that $\rho_1=\rho_2$.
By means of contradiction suppose that $\rho_1<\rho_2$.
It is well known that the dimension over $\Q$ of the center of $\Q G_i$ is the number of conjugacy classes of $G_i$. Then, by \Cref{CCN} we have
	$$2^{\rho_1}(3\cdot 2^{\nu-1}-2^{\nu+\rho_1-\mu})=2^{\rho_2}(3\cdot 2^{\nu-1}-2^{\nu+\rho_2-\mu})$$
If $\rho_2<\mu-1$ then
$$2\rho_2+\nu-\mu=v_2(2^{\rho_2}(3\cdot 2^{\nu-1}-2^{\nu+\rho_2-\mu})=
v_2(2^{\rho_1}(3\cdot 2^{\nu-1}-2^{\nu+\rho_1-\mu}))=2\rho_1+\nu-\mu,$$
which contradicts the assumption $\rho_1<\rho_2$.
Therefore $\rho_2\ge \mu-1$.
If $\rho_1<\mu-1$ then using that $\mu\ge 2$, by condition \ref{epsilon-1}, we have
$$\rho_2+\nu-1\le v_2(2^{\rho_2}(3\cdot 2^{\nu-1}-2^{\nu+\rho_2-\mu})=
v_2(2^{\rho_1}(3\cdot 2^{\nu-1}-2^{\nu+\rho_1-\mu}))=2\rho_1+\nu-\mu<\rho_1+\nu-1,$$
again in contradiction with the assumption $\rho_1<\rho_2$.
Therefore $\rho_1=\mu-1$ and $\rho_2=\mu$ and hence $\mu\ge 3$, by condition \ref{rhomu}.
If $\sigma_2=\mu$ then, by \Cref{SpecialcaseNr}, $\Q G_2$ has a simple component with center isomorphic to $\Q(\zeta_{2^\mu}+\zeta_{2^\mu})$ while $\Q G_1$ does not.
Therefore $\sigma_2=\mu-1$. This implies that $\nu=1$, by condition \ref{epsilon-1}.
Therefore $G_2$ is the quaternion group of order $2^{\mu+1}$.
If $\sigma_1=\mu$ then $G_1$ is the dihedral group of order $2^{\mu+1}$.
Otherwise $\sigma_1=\mu-1$ and if $b_1=ba$ then $b_1^2=1$ so that $G_1$ is the semidihedral group $\GEN{a,b_1\mid a^{2^{\mu-1}} = b_1^2 = 1, a^{b_1}=a^{-1+2^{\mu-1}}}$.
Looking at the Wedderburn decomposition of the rational group algebras of dihedral, semidihedral groups and quaternion group in \cite[19.4.1]{JespersdelRioGRG1} we deduce that $\Q G_2$ has a simple component isomorphic to the quaternion algebra  $\HQ(\Q(\zeta_{2^\mu}+\zeta_{2^\mu}))$, which is a non-commutative division algebra, while $\Q G_1$ does not have any Wedderburn component which is a non-commutative division algebra. This yields the desired contradiction in this case.

So we can set $\rho=\rho_1=\rho_2$ and it remains to prove that $\sigma_1=\sigma_2$. Otherwise, we may assume that $\sigma_1=\mu-1$ and $\sigma_2=\mu<\nu+\rho$, by condition \ref{epsilon-1}.
If $\rho<\mu$ then we obtain a contradiction with \Cref{GroupsNscalculus}. Thus $\rho=\mu$. If $\mu\ge 3$ then the  contradiction follows from \Cref{SpecialcaseNr}.
Thus $\mu=2$ but then $G_1$ is the quaternion group of order $8$ and $G_2$ is the dihedral group of order $8$ and again $\Q G_1$ has Wedderburn component which is a non-commutative division algebra but $\Q G_2$ does not, yielding to the final contradiction.
\end{proof}

\section{The Isomorphism Problem for finite metacyclic nilpotent groups}\label{SectionNilpotent}

Given a finite group $G$ we say that a Wedderburn component of $\Q G$ is a \emph{$p$-component} if its degree is a power of $p$ and its center embeds in $\Q(\zeta_{p^n})$ for some non-negative integer $n$.

\begin{lemma}\label{pComponente}
	Let $G$ be a finite group and $(L,K)$ a strong Shoda pair of $G$. Then $\Q Ge(G,L,K)$ is a $p$-component if and only if $[G:L]$ is a power of $p$ and $[L:K]_{p'}\in \{1,2\}$.
\end{lemma}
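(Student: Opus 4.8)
The plan is to read everything off \Cref{SSPAlg} and then do elementary number theory. Put $m=[L:K]$. By \Cref{SSPAlg} the degree of $\Q Ge(G,L,K)$ equals $[G:L]$, so ``the degree is a power of $p$'' is verbatim the condition $[G:L]\in\{1,p,p^2,\dots\}$, one of the two asserted conditions. Also $Z(\Q Ge(G,L,K))\cong F_{G,L,K}$, a subfield of $\Q(\zeta_m)$ with $[\Q(\zeta_m):F_{G,L,K}]=[N_G(K):L]$ (the order of $\Imagen\alpha$ in \Cref{SSPRemark}). Since $F_{G,L,K}$ is abelian over $\Q$ it is the unique subfield of $\C$ isomorphic to itself, so ``$F_{G,L,K}$ embeds in $\Q(\zeta_{p^n})$ for some $n$'' means $F_{G,L,K}\subseteq\Q(\zeta_{p^n})$ as subfields of $\C$; as $F_{G,L,K}\subseteq\Q(\zeta_m)$ and $\Q(\zeta_{p^n})\cap\Q(\zeta_m)=\Q(\zeta_{\gcd(p^n,m)})\subseteq\Q(\zeta_{m_p})$, this is equivalent to $F_{G,L,K}\subseteq\Q(\zeta_{m_p})$, where $m_p$ is the $p$-part of $m$. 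Thus the statement reduces to: assuming $[G:L]$ is a power of $p$, one has $F_{G,L,K}\subseteq\Q(\zeta_{m_p})$ if and only if $m_{p'}:=m/m_p=[L:K]_{p'}\in\{1,2\}$.

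For $(\Leftarrow)$: if $m_{p'}\in\{1,2\}$ then $\Q(\zeta_{m_p})=\Q(\zeta_m)$, because $\Q(\zeta_1)=\Q(\zeta_2)=\Q$, so $F_{G,L,K}\subseteq\Q(\zeta_m)=\Q(\zeta_{m_p})$ trivially. For $(\Rightarrow)$: from $F_{G,L,K}\subseteq\Q(\zeta_{m_p})$ we get that $\varphi(m_{p'})=[\Q(\zeta_m):\Q(\zeta_{m_p})]$ divides $[\Q(\zeta_m):F_{G,L,K}]=[N_G(K):L]$, which divides $[G:L]$; since the latter is a power of $p$, so is $\varphi(m_{p'})$. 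As $\gcd(m_{p'},p)=1$, a factor-by-factor look at $\varphi(m_{p'})=\prod_{q\mid m_{p'}}q^{v_q(m_{p'})-1}(q-1)$ forces $m_{p'}$ to be squarefree and each prime $q\mid m_{p'}$ to satisfy that $q-1$ is a power of $p$. When $p$ is odd this already gives $m_{p'}\in\{1,2\}$: an odd prime $q$ would have $q-1$ even and $\geq 2$, which cannot be a power of the odd prime $p$, so only $q=2$ can occur and $m_{p'}$ is squarefree.

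The remaining and main obstacle is $p=2$: then $m_{2'}$ is odd, and the argument above only shows it is a product of distinct Fermat primes, which can genuinely exceed $1$ (already $\varphi(3)=2$), so the bare strong--Shoda--pair data does not suffice and one needs further input about $G$. In the setting of this section $G$ is nilpotent, which supplies exactly what is missing: writing $G=G_2\times G_{2'}$, the subgroups split as $L=L_2\times L_{2'}$, $K=K_2\times K_{2'}$ with $(L_2,K_2)$, $(L_{2'},K_{2'})$ strong Shoda pairs of $G_2$, $G_{2'}$, and $\Q Ge(G,L,K)\cong\Q G_2e(G_2,L_2,K_2)\otimes_{\Q}\Q G_{2'}e(G_{2'},L_{2'},K_{2'})$ (simple, since the two centres have coprime conductors). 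The degree being a power of $2$ forces $L_{2'}=G_{2'}$, hence $N_{G_{2'}}(K_{2'})=G_{2'}=L_{2'}$; the centre lying in a $2$-power cyclotomic field then forces the second tensor factor to have centre $\Q$, so it is one-dimensional, i.e.\ $\varphi([L_{2'}:K_{2'}])=1$ and $[L_{2'}:K_{2'}]\in\{1,2\}$, which being odd equals $1$; thus $m_{2'}=[L_{2'}:K_{2'}]=1$. Running this product decomposition for all primes $q\neq p$ at once also re-proves the $p$-odd case and $(\Leftarrow)$ uniformly, so one may alternatively organise the whole proof around $G=G_p\times G_{p'}$ from the outset.
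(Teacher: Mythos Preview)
Your argument tracks the paper's exactly through the key reduction: via \Cref{SSPAlg} the degree is $[G:L]$ and the centre is $F_{G,L,K}\subseteq\Q(\zeta_m)$, whence the $p$-component hypothesis gives $F_{G,L,K}\subseteq\Q(\zeta_{m_p})$ and so $\varphi(m_{p'})\mid[N_G(K):L]\mid[G:L]$ is a power of $p$. At this point the paper simply asserts ``Then $c_{p'}$ is either $1$ or $2$'' with no further justification. You correctly stop and flag that for $p=2$ this inference is unjustified: one only obtains that $m_{2'}$ is a squarefree product of Fermat primes.

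Your suspicion is well founded; the lemma is actually \emph{false} as stated. Take $G=S_3$, $(L,K)=(A_3,1)$, $p=2$: this is a strong Shoda pair ($N_G(K)=G$, $L/K\cong C_3$ maximal abelian in $S_3$, condition (SS3) vacuous), and $\Q G\,e(G,L,K)\cong M_2(\Q)$ has degree $2$ and centre $\Q$, hence is a $2$-component, yet $[L:K]_{2'}=3\notin\{1,2\}$. So the gap you located in the paper's proof cannot be closed for arbitrary finite $G$. Your repair under the added hypothesis that $G$ is nilpotent is correct and cleanly argued: subgroups split along $G=G_p\times G_{p'}$ by coprimality, the simple component factors as a tensor product over $\Q$ of simple algebras with centres of coprime conductor (hence linearly disjoint, so the tensor is again simple with centre the compositum), and forcing that compositum into some $\Q(\zeta_{p^n})$ collapses the $p'$-factor to $\Q$, giving $[L_{p'}:K_{p'}]=1$. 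This weakened form suffices for \Cref{Main}. It does not, however, rescue the general metacyclic statement \Cref{MainpiIgual} as written: the same $S_3$ example (metacyclic, $2\in\pi_{S_3}$) shows that \emph{every} Wedderburn component of $\Q S_3$ is a $2$-component, so \Cref{piComponent} also fails at $p=2$.
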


\begin{proof}
	The reverse implication is a direct consequence of \Cref{SSPAlg}.
	Conversely, set $A=\Q Ge(G,L,K)$ and suppose that $A$ is a $p$-component.
	Let $d=[G:L]$ and $c=[L:K]$. As $d$ is the degree of $A$, then it is a power of $p$.
	Moreover the center of $A$ is isomorphic to the Galois correspondent $F_{G,L,K}=\Q(\zeta_{c})^{\Imagen(\alpha)}$ of a subgroup of $\Gal(\Q(\zeta_c)/\Q)$ isomorphic to $N_G(K)/L$ (\Cref{SSPRemark}).
	The assumption implies that $F\subseteq \Q(\zeta_{c_p})$.
	As $[N_G(K):L]$ is a power of $p$, then so is $[\Q(\zeta_c):F_{G,L,K}]$ and hence $\varphi(c_{p'})=[\Q(\zeta_c):\Q(\zeta_{c_p})]$ is a power of $p$.
	Then $c_{p'}$ is either $1$ or $2$.
\end{proof}

If $G$ is a finite group then we use the notation
\begin{equation*}\label{piDef}
\pi_G=\{p\in \pi(G) : G \text{ has a normal Hall } p'-\text{subgroup}\} \qand
\pi'_G=\pi(G)\setminus \pi_G.
\end{equation*}

\begin{remark}\label{Minimalinpi}
	If $G$ is metacyclic and $p$ is the smallest prime dividing $|G|$ then $p\in \pi_G$. In particular, $2\not\in \pi'_G$.
\end{remark}

\begin{proof}
Let $\pi=\pi_G$ and $\pi'=\pi'_G$.
If $p\in \pi'$ then by \cite[Lemma~3.1]{GarciadelRio2022}, $G'_p$ has a non-central element $h$ of order $p$. Therefore $G$ contains an element $g$ such that $[g,h]\ne 1$ and we may assume that $|g|$ is a power of a prime $q$. Then $\Aut(\GEN{h})$ has an element of order $q$. As $\Aut(\GEN{h})$ has order $p-1$ it follows that $q\mid p-1$ and in particular $q>p$. Thus $p$ is not the smallest prime dividing $|G|$.
\end{proof}

\begin{lemma}\label{piDetermined}
If $G$ and $H$ are metacyclic groups with $\Q G\cong \Q H$ then $\pi'_G=\pi'_H$ and $\pi_G=\pi_H$.
\end{lemma}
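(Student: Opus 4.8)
The plan is to characterise, for each prime $p$, membership in $\pi_G$ purely in terms of the $\Q$-algebra $\Q G$, and then invoke $\Q G\cong\Q H$. Since $\dim_\Q\Q G=|G|$ we get $|G|=|H|$ and $\pi(|G|)=\pi(|H|)$; by \Cref{Minimalinpi} one always has $2\in\pi_G\cap\pi_H$, and $\pi'_\bullet$ is the complement of $\pi_\bullet$ in $\pi(|\bullet|)$. So it is enough to prove, for every odd prime $p$ dividing $|G|=|H|$, that $p\in\pi_G$ if and only if $p\in\pi_H$.

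Fix such a $p$ and describe $p$-components combinatorially via \Cref{pComponente}: $\Q Ge(G,L,K)$ is a $p$-component exactly when $[G:L]$ is a power of $p$ and $[L:K]_{p'}\in\{1,2\}$. I would first record that every $p$-component of $\Q G$ factors through $G/O_{\{2,p\}'}(G)$: the hypothesis $[G:L]=p^a$ forces $O_{\{2,p\}'}(G)\le L$, and since $[L:K]$ then involves only the primes $2$ and $p$ while $|O_{\{2,p\}'}(G)|$ is coprime to both, the image of $O_{\{2,p\}'}(G)$ in $L/K$ is trivial, so $O_{\{2,p\}'}(G)\le K\subseteq\ker$. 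The same argument applied with $p=2$ (legitimate because $2\in\pi_G$, so $G/O_{2'}(G)$ is a $2$-group) shows that the $2$-components of $\Q G$ are precisely the Wedderburn components of $\Q Q$, where $Q$ is a Sylow $2$-subgroup of $G$; as $Q$ is metacyclic, \Cref{Isop} recovers $Q$ up to isomorphism, and hence also the integer $r$ equal to the number of Wedderburn components of $\Q Q$ isomorphic to $\Q$.

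The key characterisation I would establish is: for odd $p$, one has $p\in\pi_G$ if and only if the multiset of $p$-components of $\Q G$ consists of $r$ copies of the multiset of Wedderburn components of $\Q P$ for some metacyclic $p$-group $P$ with $|P|=|G|_p$. For the forward direction, if $p\in\pi_G$ then $G$ is both $p$-nilpotent and $2$-nilpotent, hence has a normal Hall $\{2,p\}'$-subgroup with $G/O_{\{2,p\}'}(G)\cong Q\times P$, $P$ a (metacyclic) Sylow $p$-subgroup; expanding $\Q(Q\times P)=\Q Q\otimes_\Q\Q P$ and applying \Cref{pComponente} one checks that the $p$-components of $\Q G$ are exactly the Wedderburn components of $\Q P$, each with multiplicity $r$. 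Assuming also the converse (discussed below), we conclude: if $\Q G\cong\Q H$ then the multisets of $2$-components and of $p$-components agree, hence so do $r$ and $|G|_p=|H|_p$, the displayed condition is intrinsic to the algebra, and the $p$-group $P$ it produces is forced by \Cref{Isop}; therefore $p\in\pi_G\Leftrightarrow p\in\pi_H$, giving $\pi_G=\pi_H$ and $\pi'_G=\pi'_H$.

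The hard part is the converse: when $p\notin\pi_G$, the $p$-components of $\Q G$ cannot have that form, and I expect to prove it by showing their total $\Q$-dimension is strictly smaller than $r\,|G|_p$ (the multiset of components of $\Q P$ has total $\Q$-dimension $|P|$, so $r$ copies would have total dimension $r\,|G|_p$). Passing to $\ov G=G/O_{\{2,p\}'}(G)$ changes neither the $p$-components nor the failure of $p$-nilpotency and leaves $\ov G$ metacyclic with $O_{\{2,p\}'}(\ov G)=1$, so one may assume $O_{\{2,p\}'}(G)=1$; then $C_G(\mathrm{Fit}(G))\le\mathrm{Fit}(G)=O_2(G)\times O_p(G)$, and since $G$ is not $p$-nilpotent some $q$-element with $q\ne p$ acts non-trivially on $O_p(G)/\Phi(O_p(G))$. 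Such a non-trivial coprime action forces irreducible characters of $G$ of degree divisible by $q$ (a moved linear character of this elementary abelian section has orbit of size divisible by $q$), whose Wedderburn components are therefore not $p$-components; the remaining task is the bookkeeping showing these absent components consume enough of the total dimension $|G|$ to keep the $p$-components below $r\,|G|_p$, and this is where the classification of finite metacyclic groups and the explicit shape \eqref{WCMetacyclic} of their Wedderburn components should reduce everything to a finite computation over the parameters describing $\ov G$.
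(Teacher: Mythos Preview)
Your approach is substantially more elaborate than the paper's, and it has a genuine gap in the converse direction.

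The paper's proof is a two-line argument once the following purely group-theoretic characterisation is established: for a metacyclic group $G$,
\[
\pi'_G=\{p\in\pi(G') : (G/G')_p \text{ is cyclic}\}.
\]
This is proved directly from the structure theory of metacyclic groups (specifically \cite[Lemma~3.1]{GarciadelRio2022} together with the Burnside Basis Theorem). Once this is in hand, the lemma follows immediately: $\Q G\cong\Q H$ gives $G/G'\cong H/H'$ by \Cref{Abelianizado}, hence $|G'|=|H'|$, and since commutator subgroups of metacyclic groups are cyclic, $G'\cong H'$. Thus both ingredients in the displayed characterisation match for $G$ and $H$.

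Your route instead tries to read off $p\in\pi_G$ from the multiset of $p$-components of $\Q G$. The forward implication is essentially \Cref{piComponent}, and your identification of $r$ with $[G_2:G'_2G_2^2]$ is correct. But the converse --- that when $p\notin\pi_G$ the $p$-components \emph{cannot} form $r$ copies of some $\Q P$ --- is left as a sketch. You propose a dimension-counting argument, observing that a coprime action on $O_p(G)/\Phi(O_p(G))$ produces characters of degree divisible by some $q\ne p$, but you never establish the needed inequality that the total $\Q$-dimension of the $p$-components is strictly below $r\,|G|_p$. This is not obviously true, and the ``bookkeeping'' you defer to the metacyclic classification is the entire content of the argument. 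Without it the characterisation is one-directional and the proof does not close.

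In short: the paper avoids all of this by noticing that $\pi'_G$ depends only on the pair $(G',G/G')$, both of which are cyclic (hence determined by their orders) and both recoverable from $\Q G$. You should look for such an elementary invariant rather than trying to reconstruct the Sylow structure from the Wedderburn components at this stage.
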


\begin{proof}
Let $\pi=\pi_G$ and $\pi'=\pi'_G$.
We claim that $\pi'=\{p\in \pi(G') : (G/G')_p \text{ is cyclic}\}$.
Let $A=\GEN{a}\unlhd G$ and $B=\GEN{b}\le G$ with $G=AB$.
By \cite[Lemma~3.1]{GarciadelRio2022}, $\GEN{a_p,b_p}$ is a Sylow $p$-subgroup of $G$, $A_{\pi'}=G'_{\pi'}$ and $G=A_{\pi'}\rtimes \left(B_{\pi'}\times \prod_{q\in \pi} A_qB_q\right)$. Therefore, if $p\in \pi'$ then $(G/G')_p$ is cyclic.
If $p\in \pi'\setminus \pi(G')$ then $A_{\pi'}\rtimes \left(B_{\pi'\setminus \{p\}}\times \prod_{q\in \pi} A_qB_q\right)$ is a normal Hall $p'$-subgroup of $G$ and hence $p\in \pi$, a contradiction.
This proves that $\pi'\subseteq \{p\in \pi(G') : (G/G')_p \text{ is cyclic}\}$.
Conversely, if $p\in \pi$ then $[b_{p'},a_p]=1$ and therefore
$G'_p=\GEN{a_p,b_p}'$.
Then $(G/G')_p\cong \GEN{a_p,b_p}/\GEN{a_p,b_p}'$.
Therefore, if $(G/G')_p$ is cyclic then so is $\GEN{a_p,b_p}$ by the Burnside Basis Theorem.
In that case $1=\GEN{a_p,b_p}=G'_p$, i.e. $p\not\in \pi(G')$.
This finishes the proof of the claim.

By \Cref{Abelianizado}, the assumption implies that $G/G'\cong H/H'$ and hence $|G'|=|H'|$.
Then $G'\cong H'$ as both $G'$ and $H'$ are cyclic.
Then, using the claim for $G$ and $H$ we deduce that $\pi'_G=\{p\in \pi(G'):(G/G')_p \text{ is cyclic}\} =\{p\in \pi(H'):(H/H')_p \text{ is cyclic}\} =\pi'_H$ and $\pi_G=\pi(|G|)\setminus \pi'_G=\pi(|H|)\setminus \pi'_H=\pi_H$.
\end{proof}

In the remainder of the paper if $G$ is a group and $p$ is a prime then $G_p$ denotes a Sylow subgroup of $G$ and $G_{p'}$ a Hall $p'$-subgroup of $G$.

\begin{lemma}\label{piComponent}
If $G$ is metacyclic and $p\in \pi_G$ then the sum of the $p$-components of $\Q G$ is isomorphic to a direct product of $k$ copies of $\Q G_p$, where
	$$k=\begin{cases} 1,&  \text{if } p=2; \\ [G_2:G'_2G_2^2], & \text{otherwise}.\end{cases}$$
\end{lemma}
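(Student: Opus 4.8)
The plan is to locate all $p$-components of $\Q G$ inside a single quotient $\Q(G/M)$ and then to split that quotient as a tensor product using the metacyclic structure. Since $p\in\pi_G$, the group $G$ has a unique normal Hall $p'$-subgroup $N=G_{p'}$, and since $2\in\pi_G$ as well by \Cref{Minimalinpi}, applying \Cref{Minimalinpi} inside the metacyclic group $N$ shows that $N$ (hence $G$) has a normal Hall $\{2,p\}'$-subgroup $M$ (with $M=N=G_{2'}$ when $p=2$); concretely $M=G_{\{2,p\}'}$. Moreover, by the structural description of finite metacyclic groups already used in the proof of \Cref{piDetermined} (see \cite[Lemma~3.1]{GarciadelRio2022}), the Sylow subgroups $G_q$ with $q\in\pi_G$ pairwise commute, so $G/M\cong G_2\times G_p$ when $p$ is odd and $G/M=G_2$ when $p=2$. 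I would then show that every $p$-component of $\Q G$ is already a Wedderburn component of $\Q(G/M)$: by \Cref{SSPMetabelian} such a component equals $\Q Ge(G,L,K)$ for a strong Shoda pair $(L,K)$; by \Cref{pComponente}, $[G:L]$ is a power of $p$ and $[L:K]_{p'}\in\{1,2\}$, so $[G:K]=[G:L][L:K]$ is a $\{2,p\}$-number (a power of $2$ if $p=2$); since $[G:\Core_G(K)]$ divides a power of $[G:K]$, it is again a $\{2,p\}$-number, hence $M\le\Core_G(K)$ as $M$ is the unique normal Hall $\{2,p\}'$-subgroup; and since $\Core_G(K)=\{g\in G:ge(G,L,K)=e(G,L,K)\}$ by \Cref{SSPAlg}, this gives $e(G,L,K)\widehat M=e(G,L,K)$.

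\emph{The case $p=2$.} Here $\Q(G/M)=\Q G_2$ is the rational group algebra of a $2$-group, so each of its Wedderburn components has degree equal to a character degree of $G_2$ (a power of $2$) and center contained in some $\Q(\zeta_{2^c})$; thus every Wedderburn component of $\Q G_2$ is a $2$-component. Combining this with the reduction, the sum of the $2$-components of $\Q G$ equals $\Q G_2$, which gives $k=1$.

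\emph{The case $p$ odd.} Now $\Q(G/M)\cong\Q G_2\otimes_\Q\Q G_p$. Writing $\Q G_2=\bigoplus_i A_i$ and $\Q G_p=\bigoplus_j B_j$, the center of each $A_i$ lies in some cyclotomic field $\Q(\zeta_{2^c})$ and that of each $B_j$ in some $\Q(\zeta_{p^m})$; these are abelian over $\Q$ with intersection $\Q$, hence linearly disjoint, so each $A_i\otimes_\Q B_j$ is simple and the Wedderburn components of $\Q(G/M)$ are precisely the $A_i\otimes_\Q B_j$. Such a component has degree $\Deg(A_i)\Deg(B_j)$, which is a power of $p$ exactly when $\Deg(A_i)=1$, i.e.\ when $A_i$ is a field; and in that case its center is the compositum $Z(A_i)Z(B_j)$, which is contained in some $\Q(\zeta_{p^n})$ exactly when $Z(A_i)\subseteq\Q(\zeta_{2^c})\cap\Q(\zeta_{p^n})=\Q$, i.e.\ when $A_i=\Q$. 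Thus the $p$-components of $\Q G$ are exactly the components $A_i\otimes_\Q B_j\cong B_j$ with $A_i=\Q$. Finally, a component of $\Q G_2$ equals $\Q$ if and only if it is afforded by an irreducible character of $G_2$ with values in $\{\pm1\}$, and distinct such characters give distinct components; hence the number of indices $i$ with $A_i=\Q$ is $|\Hom(G_2,C_2)|=|\Hom(G_2/G_2'G_2^2,C_2)|=[G_2:G_2'G_2^2]=k$. Therefore the sum of the $p$-components of $\Q G$ is $\bigoplus_{i:A_i=\Q}\bigoplus_j B_j\cong(\Q G_p)^k$.

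The step I expect to require the most care is the reduction: checking that the relevant Hall $\{2,p\}'$-subgroup $M$ is indeed normal in $G$, and, more delicately, that $M\le\Core_G(K)$ for \emph{every} $p$-component, which hinges on controlling the prime divisors of $[G:\Core_G(K)]$ through those of $[G:K]$ and on the case distinction coming from $[L:K]_{p'}\in\{1,2\}$. On the structural side one must also be comfortable that $G/M\cong G_2\times G_p$ for odd $p$, for which the commuting-Sylow description of metacyclic groups is exactly what is needed; everything after that is essentially bookkeeping with tensor products of central simple algebras over disjoint cyclotomic fields.
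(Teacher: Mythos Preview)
Your proof is correct and follows essentially the same reduction as the paper: both locate all $p$-components inside $\Q(G/M)$ for the normal Hall $\{2,p\}'$-subgroup $M$, and both handle $p=2$ identically. The only noteworthy difference is in the odd-$p$ endgame: the paper passes to a \emph{further} quotient by the preimage $L$ of the Frattini subgroup $G_2'G_2^2$, observing that $L\subseteq K$ for every relevant strong Shoda pair so that $\Q(G/L)\cong\Q(G_p\times E)\cong(\Q G_p)^k$ with $E$ elementary abelian of order $k$, and every component there is automatically a $p$-component; you instead stay at $\Q(G/M)\cong\Q G_2\otimes_\Q\Q G_p$, decompose it as $\bigoplus_{i,j}A_i\otimes_\Q B_j$, and pick out the $p$-components as exactly those with $A_i=\Q$, counting these via $|\Hom(G_2,C_2)|$. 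Both routes arrive at the same count for the same underlying reason, and yours has the virtue of making the tensor structure explicit, while the paper's extra quotient avoids having to argue that $A_i\otimes_\Q B_j$ is simple.

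One small wording issue: saying ``$2\in\pi_G$ by \Cref{Minimalinpi}'' is not literally correct when $2\nmid|G|$ (then $2\notin\pi(G)\supseteq\pi_G$); what \Cref{Minimalinpi} actually gives is $2\notin\pi'_G$, which is all you need to produce the normal Hall $\{2,p\}'$-subgroup (e.g.\ as the intersection of the normal Hall $p'$- and $2'$-subgroups, the latter being all of $G$ if $2\nmid|G|$). This does not affect the argument.
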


\begin{proof}
Let $\pi=\pi_G$ and $\pi'=\pi'_G$ and suppose that $p\in \pi$.
By \Cref{Minimalinpi}, $2\not\in \pi'$  and hence $G$ has a normal Hall $\{2,p\}'$-subgroup $N$.
Let $e$ be a primitive central idempotent such that $\Q Ge$ is a $p$-component of $G$. Then $e=e(G,L,K)$ for some strong Shoda pair $(L,K)$ of $G$ and, by \Cref{pComponente} $[G:K]$ is either a power of $p$ or $2$ times a power of $p$.
In particular $N\subseteq K$.
Then $\widehat{N}\widehat{M}=\widehat{M}$ for every subgroup $M$ containing $K$ and as $N$ is normal in $G$ we also have $\widehat{N}\widehat{M}^g=0$ for every $g\in G$. This implies that $\widehat{N}e=e$.
This proves that every $p$-component of $\Q G$ is contained in $\Q G\widehat{N}$. Therefore $\Q G \widehat{N}=A\oplus B$ where $A$ is the sum of the $p$-components of $\Q G$, and $B$ is the sum of the Wedderburn components of $\Q G\widehat{N}$ which are not $p$-components. We want to prove that $\Q(G_p)^k\cong A$.

Suppose first that $p=2$. Therefore $N=G_{2'}$ and hence $G/N\cong G_2$. Thus $G/N$ is a $2$-group and hence every Wedderburn component of $\Q(G/N)$, and $\Q G \widehat{N}$, is a $p$-component.  Therefore $\Q(G_2)\cong \Q G \widehat{N}=A$, as desired.

Suppose that $p\ne 2$. Then $G/N=U_2\times U_p'$ with $U_2=G_{p'}/N\cong G_2$, and $U_p=G_{2'}/N\cong G_p$.
Let $F_2=U_2'U_2^2$, the Frattini subgroup of $U_2$.
Then $F_2=L/N$ for some subgroup $L$ of $G_{p'}$ and by \Cref{pComponente} it follows that $L\subseteq K$ and the argument in the first paragraph shows that every $p$-component of $\Q G$ is contained in $\Q G \widehat{L}$.
Thus $\Q G \widehat{L}=A\oplus C$ where $C$ is the sum of the Wedderburn components of $\Q G\widehat{L}$ which are not $p$-components.
Moreover, $G/L\cong U_p\times E$ for $E$ an elementary abelian $2$-group of order $k$.
Then $\Q E\cong \Q^k$ and hence $\Q G\widehat{L} \cong \Q(G/L)\cong (\Q U_p)^k$.
Moreover, as $U_p$ is a $p$-group, every Wedderburn component of $\Q U_p$ is a $p$-component. In other words, $C=0$ and hence $A\cong  (\Q U_p)^k = (\Q G_p)^k$, as desired.
\end{proof}

\begin{lemma}\label{piSylowAlgebras}
Let $G$ and $H$ be finite metacyclic groups with $\Q G\cong \Q H$, let $p\in \pi_G$ and let $G_p$ and $H_p$ be Sylow subgroups of $G$ and $H$ respectively.
Then $\Q G_p\cong \Q H_p$.
\end{lemma}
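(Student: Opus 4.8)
The plan is to reduce the statement to \Cref{piComponent} by observing that the direct sum of the $p$-components of a rational group algebra is an isomorphism invariant, and then to cancel a common power.

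First I would record that $\dim_\Q\Q G=|G|$, so $|G|=|H|$, and that $p\in\pi_H$ by \Cref{piDetermined}. Next I would observe that whether a Wedderburn component $A$ of $\Q G$ is a $p$-component depends only on $A$ as an abstract $\Q$-algebra: it is a $p$-component exactly when $\Deg(A)$ is a power of $p$ and the field $Z(A)$ admits a $\Q$-embedding into $\Q(\zeta_{p^n})$ for some $n$. Since a $\Q$-algebra isomorphism $\Q G\to\Q H$ permutes the Wedderburn components, preserving their degrees and restricting to field isomorphisms of their centers, it carries the sum $A_G$ of the $p$-components of $\Q G$ isomorphically onto the sum $A_H$ of the $p$-components of $\Q H$.

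Then I would apply \Cref{piComponent} to both groups, obtaining $A_G\cong(\Q G_p)^{k_G}$ and $A_H\cong(\Q H_p)^{k_H}$ with $k_G,k_H$ given by the formula there, so that $(\Q G_p)^{k_G}\cong(\Q H_p)^{k_H}$. The crux is to show $k_G=k_H$. If $p=2$ this is immediate, both being $1$. If $p$ is odd and $2\nmid|G|=|H|$, then $G$ and $H$ have trivial Sylow $2$-subgroups and again $k_G=k_H=1$. If $p$ is odd and $2\mid|G|$, then $2$ is the smallest prime dividing $|G|$, so $2\in\pi_G=\pi_H$ by \Cref{Minimalinpi}; fixing a normal Hall $2'$-subgroup $N$ of $G$ gives $G=N\rtimes G_2$, whence $G'\cap G_2=G_2'$ and therefore $(G/G')_2\cong G_2G'/G'\cong G_2/G_2'$, and likewise for $H$. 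Since $G/G'\cong H/H'$ by \Cref{Abelianizado}, we get $G_2/G_2'\cong H_2/H_2'$, and passing to the quotient by squares $G_2/G_2'G_2^2\cong H_2/H_2'H_2^2$, i.e. $k_G=k_H$. Write $k$ for this common value, noting $k\ge1$.

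Finally I would cancel the power $k$: from $(\Q G_p)^k\cong(\Q H_p)^k$, writing each side as a direct sum of simple $\Q$-algebras and using the uniqueness part of the Wedderburn--Artin theorem, the multiset of simple components of $\Q G_p$ (counted with multiplicity) is recovered from that of $(\Q G_p)^k$ by dividing all multiplicities by $k$, and similarly for $H$; hence $\Q G_p\cong\Q H_p$. I expect the only delicate point to be the equality $k_G=k_H$, and more precisely the identification of the invariant $[G_2:G_2'G_2^2]$ with data visible in $G/G'$; everything else is bookkeeping on top of \Cref{piComponent}.
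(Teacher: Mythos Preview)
Your proposal is correct and follows essentially the same approach as the paper: both isolate the sum of the $p$-components as an isomorphism invariant, invoke \Cref{piComponent} to identify it with $(\Q G_p)^k$, verify that $k$ depends only on $G/G'$ (hence agrees for $G$ and $H$ by \Cref{Abelianizado}), and cancel the common exponent via Wedderburn--Artin. Your justification of $k_G=k_H$ is somewhat more explicit than the paper's---you split into cases and use the semidirect decomposition $G=N\rtimes G_2$ directly, whereas the paper invokes the nilpotency of $G/G_{\pi'}$ in one line---but the underlying identification of $[G_2:G_2'G_2^2]$ with the order of the Sylow $2$-part of $(G/G')/\Phi(G/G')$ is the same.
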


\begin{proof}
Let $\pi=\pi_G$ and $\pi'=\pi'_G$ and let $k$ be as in \Cref{piComponent}. As $2\not\in \pi'$, by \Cref{Minimalinpi},  and $G/G_{\pi'}$ is nilpotent, it follows that $G_2/G'_2G_2^2$ is isomorphic to the Sylow 2-subgroup of the quotient $G/G'$ by its Frattini subgroup. Since $G/G'\cong H/H'$, the value of $k$ is the same whether it is computed for $G$ or $H$.
Let $A_G$ and $A_H$ be the sum of the Wedderburn $p$-components of $\Q G$ and $\Q H$. Since $\Q G\cong \Q H$ then $A_G\cong A_H$. By \Cref{piComponent},
$(\Q G_p)^k\cong A_G\cong A_H \cong (\Q H_p)^k$ an therefore $\Q G_p\cong \Q H_p$.
\end{proof}

We are ready to proof our main result:

\begin{proofof}\emph{\Cref{MainpiIgual}}.
By \Cref{piDetermined} we have $\pi_G=\pi_H$ and from now on we denote the latter by $\pi$.
Then the Hall $\pi$-subgroups of $G$ and $H$ are nilpotent and hence it is enough to prove that if $p\in \pi$ then the Sylow $p$-subgroups $G_p$ of $G$ and $H_p$ of $H$ are isomorphic. However, $\Q G_p\cong \Q H_p$, by \Cref{piSylowAlgebras},  and hence $G_p\cong H_p$, by \Cref{Isop}.
\end{proofof}

If $G$ is nilpotent then $\pi'_G=\emptyset$ and hence \Cref{Main} follows directly from \Cref{MainpiIgual}.


\bibliographystyle{amsalpha}
\bibliography{ReferencesMSC}
\end{document}